\newtheorem{theorem}{Theorem}[section]
\newtheorem{lemma}[theorem]{Lemma}
\newtheorem{proposition}[theorem]{Proposition}
\newtheorem{conjecture}[theorem]{Conjecture}
\newtheorem{corollary}[theorem]{Corollary}
\theoremstyle{definition}
\newtheorem{definition}[theorem]{Definition}
\newtheorem{example}[theorem]{Example}
\theoremstyle{remark}
\newtheorem{openproblem}[theorem]{Open Problem}
\newtheorem{question}[theorem]{Question}
\numberwithin{equation}{section}
\newcommand{\dist}{\operatorname{dist}}
\newcommand{\rank}{\operatorname{rank}}
\newcommand{\supp}{\operatorname{supp}}
\newcommand{\hamm}{\operatorname{hamm}}
\def\N{\mathbb{N}}
\def\R{\mathbb{R}}
\def\Z{\mathbb{Z}}
\begin{document}

\title[Combinatorics and Geometry of Transportation Polytopes]{Combinatorics and Geometry of Transportation Polytopes: An Update}

\author{Jes\'us~A.~De~Loera}
\author{Edward~D.~Kim}

\subjclass[2010]{37F20, 52B05, 90B06, 90C08}

\date{}

\begin{abstract}
A transportation polytope consists of all multidimensional arrays or tables of non-negative real numbers that satisfy certain sum conditions on subsets of the entries. They arise naturally in optimization and statistics, and also have interest for discrete mathematics because permutation matrices, latin squares, and magic squares appear naturally as lattice points of these polytopes.
                                                                                
In this paper we survey advances on the understanding of the combinatorics and geometry of these polyhedra and include some recent unpublished results on the diameter of graphs of these polytopes. In particular, this is a thirty-year update on the status of a list of open 
questions last visited in the 1984 book by Yemelichev, Kovalev and Kravtsov and the 1986 survey paper of Vlach. 
\end{abstract}

\maketitle

\section{Introduction}

Transportation polytopes are well-known objects in mathematical programming and statistics. 
In the operations research literature, classical transportation problems arise from the problem 
of transporting goods from a set of factories, each with given supply outcome, and a set of consumer centers, 
each with an amount of demand. Assuming the total supply equals the total demand
and that costs are specified for each possible pair (factory, consumer center), one may wish to optimize the cost of transporting goods. 
Indeed this was the original motivation that led Kantorovich (see~\cite{Kantorovich:On-the-translocation-of-masses}), Hitchcock (see~\cite{Hitchcock:The-Distribution-of-a-Product}), and T.~C.~Koopmans (see~\cite{Koopmans:OptimumTransportation}) to look at these problems. They are indeed among the first linear programming problems investigated, and Koopmans received the Nobel Prize in Economics for his work in this area (see~\cite{hoffman} for an interesting historical perspective). Not much later Birkhoff (see~\cite{Birkhoff:TresObservaciones}), von Neumann (see~\cite{Neumann:A-certain-zero-sum}), and Motzkin (see, e.{}g.{},~\cite{Motzkin:TransportationProblem}) were key contributors to the topic. 
The success of combinatorial algorithms such as the Hungarian method (see~\cite{Balinski:A-primal-method, Bourgeois:An-Extension-of-the-Munkres, Edmonds:Paths-Trees, Edmonds:improvementsAlgorithm, Klafszky:Variants-of-the-Hungarian, Kuhn:The-Hungarian-method, Kuhn:Variants-of-the-Hungarian, Munkres:AlgorithmsAssignmentTransportation, Tomizawa:n3}) depends on the rich combinatorial structure of the convex polyhedra
that defined the possible solutions, the so called \emph{transportation polytopes}.

In statistics, people have looked at the \emph{integral} transportation tables, which are widely known as \emph{contingency tables}. In 
statistics, a contingency table represents sample data arranged or tabulated by categories of combined  properties. Several questions motivate the study of the geometry of contingency tables, for instance,  in the table entry security problem: given a table $T$ (multi-dimensional perhaps) 
with statistics on private data about individuals, we may wish to release aggregated marginals of such a table without
disclosing information about the exact entries of the table. What can a data thief discover about $T$ from the published marginals?
When is $T$ uniquely identifiable by its margins? This problem has been studied by many researchers (see~\cite{Buzzigoli:LowerUpper,  Chowdhury:Disclosure-detection, Cox:ContingencyBounds, Cox:PropertiesStatistical, Dobra:Bounds-for-cell, Duncan:Disclosure, Duncan:DisclosureMultiple, Fienberg:Frechet-and-Bonferroni, IJ} and the references therein). Another natural problem is whether a given table presents strong evidence of significant relations between the characteristics tabulated (e.g., is cancer related to smoking). There is a lot of interest among statisticians on testing significance of independence for variables. Some methods depend on counting all possible contingency tables with given margins (see e.g., \cite{DG,Mehta:NetworkAlgorithmFisher}). This in turn is an interesting combinatorial geometric problem on the lattice points of transportation polytopes.

In this article we survey the state of the art in the combinatorics and geometry of transportation polytopes and contingency tables. The survey~\cite{Vlach:SolutionsPlanarTransportation} by Vlach, the 1984 monograph~\cite{YKK} by Yemelichev, Kovalev, and Kravtsov, and the paper~\cite{Klee:FacesTransportation} by Klee and Witzgall summarized the status of transportation polytopes up to the 1980s. Due to recent advances on the topic by the authors and others, we decided to write a new updated survey collecting remaining open problems and presenting recent solutions. We also included details on some unpublished new work on the diameter of the graphs of these polytopes.

In what follows we will denote by $[q]=\{1,2,\dots,q\}$. Similarly $\R^n_{\geq 0}$ denotes those vectors in $\R^n$ whose entries
are non-negative. Our notation and terminology on polytopes follows~\cite{Grunbaum:Polytopes} and~\cite{Ziegler:Lectures}.

\section{Classical transportation polytopes ($2$-ways)}\label{section:classical}

We begin by introducing the most well-known subfamily, the classical  transportation polytopes in just two indices. We call them \emph{$2$-way} transportation polytopes and in general $d$-ways refers to the case of variables with $d$ indices. Many of these facts are well-known and can be found in \cite{YKK}, but we repeat them here as we will use them in what follows.

Fix two integers $p, q \in \Z_{> 0}$. The \emph{transportation polytope} $P$ of size $p \times q$ defined by the vectors $u \in \R^p$ and $v \in \R^q$ is the convex polytope defined in the $pq$ variables $x_{i,j} \in \R_{\geq 0}$ ($i \in [p], j \in [q]$) satisfying the $p+q$ equations
\begin{equation}\label{equation:classicalsums}
\sum_{j=1}^q x_{i,j} = u_i\ (i \in [p])
\quad\text{and}\quad
\sum_{i=1}^p x_{i,j} = v_j\ (j \in [q]).
\end{equation}
Since the coordinates $x_{i,j}$ of $P$ are non-negative, the conditions \eqref{equation:classicalsums} imply $P$ is bounded. The vectors $u$ and $v$ are called \emph{marginals} or \emph{margins}. These polytopes are called transportation polytopes because they model the transportation of goods from $p$ supply locations (with the $i$th location supplying a quantity of $u_i$) to $q$ demand locations (with the $j$th location demanding a quantity of $v_j$). The feasible points $x = (x_{i,j})_{i \in [p], j \in [q]}$ in a $p \times q$ transportation polytope $P$ model the scenario where a quantity of $x_{i,j}$ of goods is transported from the $i$th supply location to the $j$th demand location. See Figure~\ref{figure:transportation-terminology}.
\begin{figure}[tbhp]
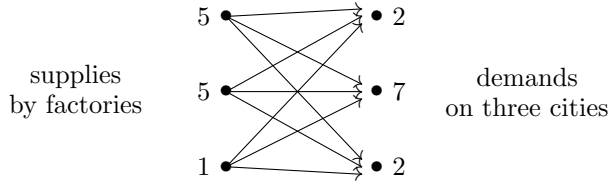

\begin{center}
\[
\xy
(0,0)*+{\bullet};
(0,-10)*+{\bullet}; 
(0,-20)*+{\bullet}; 
(20,0)*+{\bullet};
(20,-10)*+{\bullet}; 
(20,-20)*+{\bullet}; 
(-3,0)*+{5};
(-3,-10)*+{5}; 
(-3,-20)*+{1}; 
(23,0)*+{2};
(23,-10)*+{7}; 
(23,-20)*+{2}; 
{\ar (0,0)*{}; (18,1)*{}}; 
{\ar (0,0)*{}; (18,-9)*{}}; 
{\ar (0,0)*{}; (18,-19)*{}}; 
{\ar (0,-10)*{}; (18,0)*{}}; 
{\ar (0,-10)*{}; (18,-10)*{}}; 
{\ar (0,-10)*{}; (18,-20)*{}}; 
{\ar (0,-20)*{}; (18,-1)*{}}; 
{\ar (0,-20)*{}; (18,-11)*{}}; 
{\ar (0,-20)*{}; (18,-21)*{}}; 
(-20,-8)*+{\text{supplies}};
(-20,-12)*+{\text{by factories}};
(40,-8)*+{\text{demands}};
(40,-12)*+{\text{on three cities}};
\endxy
\]
\end{center}
\caption{Three supplies and three demands}\label{figure:transportation-terminology}
\end{figure}

\begin{example}\label{example:classical-tp-example}
Let us consider the $3 \times 3$ transportation polytope $P_{3 \times 3}$ defined by the marginals $u = (5,5,1)^T$ and $v = (2,7,2)^T$, which corresponds to the transportation problem shown in Figure~\ref{figure:transportation-terminology}. A point $x^* = (x^*_{i,j})$ in $P$ is shown in Figure~\ref{figure:classicalreindex}. The equations in~\eqref{equation:classicalsums} are conditions on the row sums and column sums (respectively) of tables $x \in P$.
\begin{figure}[htb]
\begin{equation*}
x^*=
\begin{tabular}{|c|c|c|}
\hline
$x^*_{1,1}$ & $x^*_{1,2}$ & $x^*_{1,3}$ \\ \hline
$x^*_{2,1}$ & $x^*_{2,2}$ & $x^*_{2,3}$ \\ \hline
$x^*_{3,1}$ & $x^*_{3,2}$ & $x^*_{3,3}$ \\ \hline
\end{tabular}
=
\begin{tabular}{|c|c|c|}
\hline
\ $2$\ &\ $2$\ &\ $1$\ \\ \hline
\ $0$\ &\ $5$\ &\ $0$\ \\ \hline
\ $0$\ &\ $0$\ &\ $1$\ \\ \hline
\end{tabular}
\end{equation*}
\caption{A point $x^* \in P_{3 \times 3}$}\label{figure:classicalreindex}
\end{figure}
\end{example}

\subsection{Dimension and feasibility}

% Feasibility

Notice in Example~\ref{example:classical-tp-example} that $5+5+1=2+7+2$. The condition that the sum of the supply margins equals the sum of the demand margins is not only necessary but also sufficient for a classical transportation polytope to be non-empty:
\begin{lemma}\label{lemma:nonemptycriterion}
Let $P$ be the $p \times q$ classical transportation polytope defined by the marginals $u \in \R_{\geq 0}^p$ and $v \in \R_{\geq 0}^q$. The polytope $P$ is non-empty if and only if 
\begin{equation}\label{equation:classicalnonempty}
\sum_{i \in [p]} u_i = \sum_{j \in [q]} v_j.
\end{equation}
\end{lemma}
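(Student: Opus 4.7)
The plan is to establish the two directions separately, with necessity being immediate from a double-counting argument and sufficiency being handled by exhibiting an explicit feasible point.

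For necessity, I would assume $x = (x_{i,j}) \in P$ exists and compute the total mass two ways. Summing the row-sum equations gives $\sum_{i \in [p]} u_i = \sum_{i \in [p]} \sum_{j \in [q]} x_{i,j}$; swapping the order of summation and applying the column-sum equations yields $\sum_{j \in [q]} \sum_{i \in [p]} x_{i,j} = \sum_{j \in [q]} v_j$. This gives \eqref{equation:classicalnonempty} with no real obstacle.

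For sufficiency, assume \eqref{equation:classicalnonempty} holds and let $S$ denote the common value $\sum_i u_i = \sum_j v_j$. If $S = 0$, then since $u$ and $v$ have non-negative entries, both vectors must be zero, and the all-zero table trivially lies in $P$. If $S > 0$, I would simply take the rank-one table $x_{i,j} := u_i v_j / S$. Non-negativity is clear from $u, v \geq 0$, and the marginal conditions follow from a one-line calculation: $\sum_j x_{i,j} = (u_i / S) \sum_j v_j = u_i$, and symmetrically $\sum_i x_{i,j} = v_j$. Thus $x \in P$.

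I do not expect any serious obstacle here; the main subtlety is only the degenerate case $S = 0$, which must be separated out so that the denominator in the product formula is well-defined. As an aside worth mentioning, one can alternatively exhibit feasibility via the classical \emph{northwest corner rule}, which greedily sets $x_{1,1} = \min(u_1, v_1)$, reduces the corresponding margin, and recurses on a smaller problem; this construction has the added virtue of producing an integer (in fact vertex) solution whenever $u$ and $v$ are integer-valued, a fact useful for later results on contingency tables.
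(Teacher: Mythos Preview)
Your proof is correct. For sufficiency, however, you take a different route than the paper: you exhibit the rank-one (independence) table $x_{i,j}=u_iv_j/S$, whereas the paper simply invokes the northwest corner rule algorithm. Your product construction is more elementary and verifies feasibility in one line, and when all margins are strictly positive it even lands in the relative interior of $P$; on the other hand, the northwest corner rule---which you correctly mention as an alternative---produces a \emph{vertex} of $P$ and preserves integrality of the margins, which is what the paper later needs (e.g., for the discussion of contingency tables and Lemma~\ref{lemma:vertexBx}). Since you already note this alternative and its integrality advantage, nothing is missing.
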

The proof of this lemma uses the well-known \emph{northwest corner rule algorithm} (see~\cite{Queyranne:MultiIndexTransportation} or Exercise 17 in Chapter 6 of~\cite{YKK}).

% Dimension

The equations \eqref{equation:classicalsums} and the inequalities $x_{i,j} \geq 0$ can be rewritten in the matrix form
\begin{equation*}
P = \{x \in \R^{pq} \mid Ax = b, x \geq 0\}
\end{equation*}
with a $0$-$1$ matrix $A$ of size $(p+q) \times pq$ and a vector $b \in \R^{p+q}$ called the \emph{constraint matrix}. The constraint matrix for a $p \times q$ transportation polytope is the vertex-edge incidence matrix of the complete bipartite graph $K_{p,q}$.
\begin{lemma}\label{lemma:classicalrankdimension}
Let $A$ be the constraint matrix of a $p \times q$ transportation polytope $P$. Then:
\begin{enumerate}
\item\label{classical-submatrices} Maximal rank submatrices of $A$ correspond to spanning trees on $K_{p,q}$.
\item\label{classical-rank} $\rank (A) = p+q-1$.
\item\label{classical-subdeterminant} Each subdeterminant of $A$ is $\pm 1$, thus $A$ is totally unimodular.
\item\label{classical-dimension} If $P \not= \emptyset$, its dimension is $pq-(p+q-1) = (p-1)(q-1)$.
\end{enumerate}
\end{lemma}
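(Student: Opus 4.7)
My plan is to dispatch all four parts together by leveraging the classical dictionary between incidence matrices of bipartite graphs and their spanning forests, reserving direct computation for the points where it is strictly necessary.

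For (1) and (2), I would argue them in tandem via the following claim: for the $\{0,1\}$-incidence matrix $A$ of $K_{p,q}$, a subset of columns is linearly independent if and only if the corresponding set of edges is acyclic. For the forward direction, if the chosen edges contain a cycle $e_1,e_2,\dots,e_{2k}$ (all cycles of a bipartite graph are even), then the alternating sum $\sum_{i=1}^{2k}(-1)^i e_i$ of the corresponding columns vanishes, giving an explicit dependence. For the reverse direction, I would peel off leaves of the forest one at a time: the row indexed by a leaf vertex has a single $1$ in the forest's column submatrix, forcing its edge's column to be independent of the others, and induction finishes the argument. It follows that maximal linearly independent column sets correspond to maximum forests of $K_{p,q}$, and since $K_{p,q}$ is connected these are exactly its spanning trees, each containing $p+q-1$ edges. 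This simultaneously yields (1) and $\rank(A)=p+q-1$.

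For (3), I would invoke (or prove by induction) the Heller--Tompkins criterion: a $\{0,1\}$-matrix with at most two $1$'s per column is totally unimodular provided its rows admit a bipartition such that every column with two nonzero entries has one entry in each class. The matrix $A$ satisfies this with respect to the natural partition into the $p$ supply rows and the $q$ demand rows. For a self-contained proof, I would induct on the order $n$ of a square submatrix $M$: a zero column in $M$ gives $\det M=0$; a column of $M$ with a single $1$ allows expansion along that column and reduction to the inductive hypothesis; and if every column of $M$ has exactly two $1$'s, then within $M$ each column still splits one-to-one between supply and demand rows, so the sum of the supply rows of $M$ equals the sum of the demand rows of $M$, giving a linear dependence and $\det M=0$.

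For (4), combining (2) with the rank-nullity principle shows that the affine hull $\{x\in\R^{pq}:Ax=b\}$ has dimension $pq-(p+q-1)=(p-1)(q-1)$. To conclude that $P$ attains this dimension, I would exhibit a point of $P$ that lies in the relative interior of the nonnegative orthant. Assuming the marginals are strictly positive (otherwise some $x_{i,j}$ is forced to vanish and one reduces to a smaller transportation polytope), the explicit choice $x_{i,j}:=u_i v_j/\sigma$ with $\sigma:=\sum_i u_i=\sum_j v_j$ lies in $P$ and is strictly positive in every coordinate, which upgrades the affine-hull dimension count to $\dim P=(p-1)(q-1)$. The main obstacle is really one of packaging rather than mathematical content: the forests-versus-independence fact and the Heller--Tompkins criterion are classical, so the proof's cleanliness depends on choosing the slickest formulation of each. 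The one genuine subtlety is the positivity hypothesis on the marginals in (4), since without it ``$P\neq\emptyset$'' does not by itself force $\dim P=(p-1)(q-1)$, and one must either restrict to strictly positive marginals or reduce to that case by deleting the rows and columns indexed by zero marginals.
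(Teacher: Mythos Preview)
Your proof is correct and complete. The paper, being a survey, does not actually prove this lemma: the only remark it offers is the single line ``Part~4 follows from Part~2,'' leaving (1)--(3) as well-known facts without argument. So there is no substantive comparison of approaches to make; your treatment is the standard one and fills in exactly what the paper omits.

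One point worth highlighting: your handling of (4) is in fact more careful than the paper's one-line remark. The paper's claim that (4) follows from (2) is a bit glib, since rank alone determines only the dimension of the affine hull $\{x:Ax=b\}$, not of the polytope $P$ itself. You correctly note that one must also exhibit a strictly positive feasible point (your $x_{i,j}=u_iv_j/\sigma$) to ensure that the nonnegativity constraints do not cut the dimension down, and you flag the subtlety that this requires strictly positive marginals. That observation is genuine and the paper glosses over it.

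A minor quibble on (3): the lemma as stated in the paper says ``each subdeterminant of $A$ is $\pm 1$,'' which is literally false (many subdeterminants are $0$); your proof correctly establishes that every subdeterminant lies in $\{-1,0,1\}$, which is the actual content of total unimodularity. This is an imprecision in the paper's statement, not in your argument.
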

Part~\ref{classical-dimension} follows from Part~\ref{classical-rank}.

\begin{example}\label{example:classicalreindex}
Continuing from Example~\ref{example:classical-tp-example}, observe $P_{3 \times 3} = \{ x \in \R^9 \mid A_{3 \times 3}x = b, x \geq 0 \}$, where $A_{3 \times 3}$ is the constraint matrix
\begin{equation}
A_{3 \times 3} = \left[
\begin{array}{ccccccccc}
1&0&0&1&0&0&1&0&0 \\
0&1&0&0&1&0&0&1&0 \\
0&0&1&0&0&1&0&0&1 \\
1&1&1&0&0&0&0&0&0 \\
0&0&0&1&1&1&0&0&0 \\
0&0&0&0&0&0&1&1&1
\end{array}
\right]
\quad
\text{\rm and }
\quad
b = \left[
\begin{array}{c}
2 \\ 7 \\ 2 \\ 5 \\ 5 \\ 1
\end{array}
\right ]
.
\end{equation}
Up to permutation of rows and columns, the matrix $A_{3 \times 3}$ is the unique constraint matrix for $3 \times 3$ classical transportation polytopes. It is a $6 \times 9$ matrix of rank five. Thus, $P_{3 \times 3}$ is a four-dimensional polytope described in a nine-dimensional ambient space.
\end{example}

Birkhoff polytopes, first introduced by G. Birkhoff in~\cite{Birkhoff:TresObservaciones}, are an important subclass of transportation polytopes:
\begin{definition}\label{definition:BirkhoffPolytope}
The \emph{$p$th Birkhoff polytope}, denoted by $B_p$, is the $p \times p$ classical transportation polytope with margins $u=v=(1,1,\ldots,1)^T$.
\end{definition}
The Birkhoff polytope is also called the \emph{assignment polytope} or the \emph{polytope of doubly stochastic matrices} (see, e.g.,~\cite{BB}). It is the perfect matching polytope of the complete bipartite graph $K_{p,p}$. We can generalize the definition of the Birkhoff polytope to rectangular arrays:
\begin{definition}\label{definition:generalizedbirkhoffclassical}
The \emph{central transportation polytope} is the $p \times q$ classical transportation polytope with $u_1= \cdots = u_p = q$ and $v_1 = \cdots = v_q = p$.  This polytope is also called the \emph{generalized Birkhoff polytope} of size $p \times q$.
\end{definition}

\subsection{Combinatorics of faces and graphs}

The study of the faces of transportation polytopes is a nice combinatorial question (see, e.g.,~\cite{BR}). Unfortunately it is still incomplete, e.g., one does not
know the number of $i$-dimensional faces of each dimension other than in a few cases. E.g., in~\cite{Pak:Number}, Pak presented an efficient algorithm for computing the $f$-vector of the generalized Birkhoff polytope of size $p \times (p+1)$. Hartfiel (see~\cite{Hartfiel:Full-patterns}) and Dahl (see~\cite{Dahl:Transportation-matrices}) described the supports of certain feasible points in classical transportation polytopes.
In this section, we  fully describe the vertices and the edges of a $2$-way  transportation polytope $P$.  The resulting graph has some interesting
properties, but there are still open questions about it.

Let $P$ be a $p \times q$ classical transportation polytope. 
For a point $x = (x_{i,j})_{i \in [p], j \in [q]}$, define the \emph{support set} $\supp(x) = \{(i,j) \in [p] \times [q] \mid x_{i,j} > 0\}$.
We also define a bipartite graph $B(x)$, called the \emph{support graph} of $x$. The graph $B(x)$ is the following subgraph of the complete bipartite graph $K_{p,q}$: 
\begin{itemize}
\item {\bf Vertices of $B(x)$.} The vertices of the graph $B(x)$ are the vertices of the complete bipartite graph $K_{p,q}$. We label the supply nodes $\sigma_1,\dots,\sigma_p$ and the demand nodes $\delta_1,\dots,\delta_q$.
\item {\bf Edges of $B(x)$.} There is an edge $(\sigma_i,\delta_j)$ if and only if $x_{i,j}$ is \emph{strictly} positive. In other words, the edge set is indexed by $\supp(x)$.
\end{itemize}

\begin{example}
Let us consider the point $x^* \in P_{3 \times 3}$ from Example~\ref{example:classical-tp-example}. Here, $\supp(x^*)= \{(\sigma_1,\delta_1),(\sigma_1,\delta_2),(\sigma_1,\delta_3),(\sigma_2,\delta_2),(\sigma_3,\delta_3)\}$. Figure~\ref{figure:Bv1} depicts the graph $B(x^*)$.
\begin{figure}[hbt]
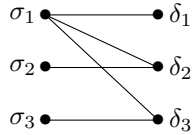

\[
\xy
(0,0)*+{\bullet};
(0,-7)*+{\bullet}; 
(0,-14)*+{\bullet}; 
(15,0)*+{\bullet};
(15,-7)*+{\bullet}; 
(15,-14)*+{\bullet}; 
(-3,0)*+{\sigma_1};
(-3,-7)*+{\sigma_2}; 
(-3,-14)*+{\sigma_3}; 
(18,0)*+{\delta_1};
(18,-7)*+{\delta_2}; 
(18,-14)*+{\delta_3}; 
(0,0); (15,0) **\dir{-}; 
(0,0); (15,-7) **\dir{-}; 
(0,0); (15,-14) **\dir{-}; 
(0,-7); (15,-7) **\dir{-}; 
(0,-14); (15,-14) **\dir{-}; 
\endxy
\]
\caption{The support graph $B(x^*)$ of the point $x^* \in P_{3 \times 3}$. The nodes of $B(x^*)$ on the left are the $p=3$ supplies. The nodes on the right are the $q=3$ demands.}\label{figure:Bv1}
\end{figure}
\end{example}

%Genericity

An important subclass of transportation polytopes are those which are generic. Generic transportation polytopes are easiest to analyze 
in the proofs which follow and are the ones typically appearing in applications. Generic $d$-way transportation polytopes are those whose vertices have maximal possible non-zero entries. All generic transportation polytopes are simple, but not vice versa.

\begin{definition}
A $p \times q$ classical transportation polytope $P$ is \emph{generic} if
\begin{equation}\label{equation:nondegenerateclassicalequation}
\sum_{i \in Y} u_i \not= \sum_{j \in Z} v_j.
\end{equation}
for every non-empty proper subset $Y \subsetneq [p]$ and non-empty proper subset $Z \subsetneq [q]$. (Of course, due to \eqref{equation:classicalnonempty}, we must disallow the case where $Y = [p]$ and $Z = [q]$.)
\end{definition}

The graph properties of $B(x)$ provide a useful combinatorial characterization of the vertices of classical transportation polytopes:
\begin{lemma}[Klee, Witzgall~\cite{Klee:FacesTransportation}]\label{lemma:vertexBx}
Let $P$ be a $p \times q$ classical transportation polytope defined by the marginals $u \in \R^p_{>0}$ and $v \in \R^q_{>0}$, and let $x \in P$. Then the graph $B(x)$ is spanning. The point $x$ is a vertex of $P$ if and only if $B(x)$ is a spanning forest. Moreover, if $P$ is generic, then $x$ is a vertex of $P$ if and only if $B(x)$ is a spanning tree.
\end{lemma}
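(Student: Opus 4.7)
My plan is to handle the three assertions separately, leaning on Lemma~\ref{lemma:classicalrankdimension}(\ref{classical-submatrices}) as the bridge between the graph-theoretic language of $B(x)$ and the linear algebra of $A$. The spanning claim is immediate: since $u_i>0$, the row-sum condition forces some $x_{i,j}>0$, giving an edge at $\sigma_i$, and symmetrically $v_j>0$ produces an edge at $\delta_j$.

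For the vertex characterization I would argue both directions. If $B(x)$ contains a cycle, bipartiteness guarantees even length $2k$, say $\sigma_{i_1}\delta_{j_1}\cdots\sigma_{i_k}\delta_{j_k}\sigma_{i_1}$, and I would define a perturbation $z\in\R^{pq}$ whose entries alternate $\pm 1$ along the cycle and vanish elsewhere; the alternation makes each touched row and each touched column of $z$ contain exactly one $+1$ and one $-1$, so $Az=0$. Because every cycle entry of $x$ is strictly positive, $x\pm\epsilon z\in P$ for sufficiently small $\epsilon>0$, contradicting that $x$ is a vertex. Conversely, if $B(x)$ is a spanning forest, then by Lemma~\ref{lemma:classicalrankdimension}(\ref{classical-submatrices}) its edges index linearly independent columns of $A$; completing the forest to any spanning tree of $K_{p,q}$ produces a basis of the column space, and since the added tree edges correspond to coordinates of $x$ that are already zero, the point $x$ is the unique basic feasible solution associated with that basis, hence a vertex of $P$.

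For the generic case I would argue by contradiction: assume $B(x)$ is a spanning forest with two or more components, pick a component $C$, and let $Y\subseteq[p]$ and $Z\subseteq[q]$ index its supply and demand nodes. The spanning property makes both $Y$ and $Z$ non-empty, and both are proper because $C$ is not all of $K_{p,q}$ (if $Y=[p]$, spanning forces every $\delta_j$ to lie in $C$, so $Z=[q]$ and there would be only one component). Since all edges of $C$ live in $Y\times Z$, a double-summation gives
\[
\sum_{i\in Y}u_i \;=\; \sum_{(i,j)\in Y\times Z} x_{i,j} \;=\; \sum_{j\in Z}v_j,
\]
contradicting the genericity condition~\eqref{equation:nondegenerateclassicalequation}.

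The subtlest point, I expect, is the cycle perturbation in the first direction: one must verify that the alternating $\pm 1$ assignment is globally consistent along the even cycle so that \emph{both} the supply-side and demand-side sums of $z$ vanish simultaneously. The forest-to-vertex implication and the generic-case component argument are short once Lemma~\ref{lemma:classicalrankdimension} is in hand.
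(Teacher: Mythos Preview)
The paper does not actually prove this lemma; it is stated with attribution to Klee and Witzgall and followed immediately by its corollary, so there is no in-text proof to compare against. Your argument is correct and is essentially the standard one: the spanning claim follows from positivity of the marginals, the cycle-perturbation direction is the usual even-cycle $\pm1$ construction (and yes, bipartiteness guarantees the alternation is consistent, since traversing the cycle alternates supply and demand nodes so each row and column touched sees exactly one $+1$ and one $-1$), the forest-to-vertex direction is the basic-feasible-solution argument you gave via Lemma~\ref{lemma:classicalrankdimension}(\ref{classical-submatrices}), and the generic refinement is exactly the component double-count against~\eqref{equation:nondegenerateclassicalequation}.
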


\begin{corollary}\label{corollary:classicalTPsizeofsupport}
Let $x$ be a point in a generic $p \times q$ classical transportation polytope $P$. Then $x$ is a vertex of $P$ if and only if $|\supp(x)| = p+q-1$.
\end{corollary}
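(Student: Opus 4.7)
The plan is to deduce the corollary directly from Lemma~\ref{lemma:vertexBx} (Klee--Witzgall) together with the elementary fact that a spanning tree on the $p+q$ vertices of $K_{p,q}$ has exactly $p+q-1$ edges. Because the edges of $B(x)$ are indexed precisely by $\supp(x)$, the cardinality condition $|\supp(x)|=p+q-1$ translates into the edge-count condition that $B(x)$ has $p+q-1$ edges.

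The forward direction is immediate: if $x$ is a vertex of the generic polytope $P$, then Lemma~\ref{lemma:vertexBx} says $B(x)$ is a spanning tree on $K_{p,q}$, which has $(p+q)-1$ edges, so $|\supp(x)|=p+q-1$.

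For the converse, suppose $|\supp(x)|=p+q-1$. By Lemma~\ref{lemma:vertexBx} it is enough to show that $B(x)$ is a spanning tree, and the same lemma already gives that $B(x)$ is spanning. The main obstacle, and the only place where genericity will be used, is to rule out the possibility that $B(x)$ contains a cycle; indeed, on $p+q$ vertices with $p+q-1$ edges, a connected graph cannot have any cycle, so the task reduces to showing $B(x)$ is connected. I would argue by contradiction: suppose $B(x)$ has at least two connected components and pick one whose vertex set is $V'$. Set $Y=\{i\in[p]:\sigma_i\in V'\}$ and $Z=\{j\in[q]:\delta_j\in V'\}$. Positivity of the margins forces every vertex of $B(x)$ to have degree at least one, so $Y$ and $Z$ are non-empty; applying the same observation to the complementary component rules out $Y=[p]$ and $Z=[q]$, giving $Y\subsetneq[p]$ and $Z\subsetneq[q]$. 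Summing the entries of $x$ supported inside the chosen component by rows yields $\sum_{i\in Y}u_i$, while summing the same entries by columns yields $\sum_{j\in Z}v_j$, so these two quantities are equal. This directly contradicts the genericity condition~\eqref{equation:nondegenerateclassicalequation}.

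Therefore $B(x)$ must be connected, and a connected bipartite graph on $p+q$ vertices with $p+q-1$ edges is a spanning tree. A final application of Lemma~\ref{lemma:vertexBx} then identifies $x$ as a vertex of $P$, completing the plan.
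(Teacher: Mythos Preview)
Your proof is correct and is precisely the natural way to fill in the details the paper omits (the corollary is stated without proof, as an immediate consequence of Lemma~\ref{lemma:vertexBx}). The forward direction is the one-line edge count for a spanning tree; for the converse you rightly observe that the only nontrivial point is connectedness of $B(x)$, and your genericity argument---splitting a putative component into row and column index sets and equating the two partial marginal sums---is exactly the standard one that also underlies the ``moreover'' clause of Lemma~\ref{lemma:vertexBx}.

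One cosmetic remark: the phrase ``the complementary component'' presumes only two components; writing ``the complement of $V'$ in the vertex set'' (or ``any other component'') covers the general case without changing the argument, since every node still has degree at least one and no edge leaves $V'$.
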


A vertex of a $p \times q$ transportation polytope is \emph{non-degenerate} if it has $p+q-1$ positive entries. Otherwise, the vertex is \emph{degenerate}. A transportation polytope is \emph{non-degenerate} if all its vertices are non-degenerate. 
Non-degenerate transportation polytopes are of particular interest, as they have the largest possible number of vertices and largest possible diameter among the graphs of all transportation polytopes of given type and parameters (e.g., $p$, $q$, and $s$). 
Indeed, if $P$ is a degenerate transportation polytope, by carefully perturbing the marginals that define $P$ we can get a non-degenerate polytope $P'$. (A careful explanation of how to do the perturbation is given in Lemma 4.6 of Chapter 6 in~\cite{YKK} on page 281.) The perturbed marginals are obtained by taking a feasible point $x$ in $P$, perturbing the entries in the table and using the recomputed sums as the new marginals for $P'$. The graph of $P$ can be obtained from that of $P'$ by contracting certain edges, which cannot increase either the diameter nor the number of vertices.

Finally, note the following property on the vertices of a classical transportation polytope, which follows from part~\ref{classical-subdeterminant} of Lemma~\ref{lemma:classicalrankdimension} and Cramer's rule:
\begin{corollary}
Given integral marginals $u,v$, all vertices of the corresponding transportation polytope are integral.
\end{corollary}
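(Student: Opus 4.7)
The plan is to exploit the standard vertex characterization for polytopes of the form $\{x : Ax = b,\, x \geq 0\}$ together with the total unimodularity of the constraint matrix $A$ established in part~\ref{classical-subdeterminant} of Lemma~\ref{lemma:classicalrankdimension}. Recall that every vertex $x^*$ of $P$ is a basic feasible solution: there is a choice of $\rank(A) = p+q-1$ columns of $A$ indexed by some set $B$ such that the submatrix $A_B$ is nonsingular, the non-basic coordinates of $x^*$ vanish, and the basic coordinates $x^*_B$ satisfy $A_B x^*_B = b$.

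First I would invoke Cramer's rule to write each basic coordinate as
\begin{equation*}
(x^*_B)_i = \frac{\det(A_B^{(i)})}{\det(A_B)},
\end{equation*}
where $A_B^{(i)}$ denotes the matrix obtained from $A_B$ by replacing its $i$th column with $b$. Next I would use total unimodularity: since $A_B$ is a nonsingular square submatrix of $A$, part~\ref{classical-subdeterminant} of Lemma~\ref{lemma:classicalrankdimension} gives $\det(A_B) = \pm 1$. The matrix $A_B^{(i)}$ has entries drawn from the entries of $A$ (all in $\{0,1\}$) and from $b$ (integral by hypothesis on $u, v$), so $\det(A_B^{(i)}) \in \Z$ by the Leibniz expansion. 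Therefore each $(x^*_B)_i$ is an integer, and the remaining coordinates of $x^*$ are $0$. Hence $x^* \in \Z^{pq}$.

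There is no real obstacle here; everything has been prepared by the preceding lemma. The only thing one might want to double-check is that a vertex really is a basic feasible solution in this $Ax = b$, $x \geq 0$ form — but this is standard linear programming, and the only genuine input from the paper is total unimodularity of the vertex-edge incidence matrix of $K_{p,q}$.
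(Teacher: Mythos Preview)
Your proof is correct and matches the paper's approach exactly: the paper states that the corollary follows from part~\ref{classical-subdeterminant} of Lemma~\ref{lemma:classicalrankdimension} (total unimodularity) together with Cramer's rule, and you have simply written that argument out in full. The only cosmetic point is that $A$ has $p+q$ rows but rank $p+q-1$, so one tacitly drops a redundant equation before forming the square basis matrix $A_B$; this is the standard LP convention you already allude to.
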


We now recall a classical characterization of the vertices of the Birkhoff polytope:
\begin{theorem}[Birkhoff-von Neumann Theorem]\label{theorem:birkhoffvonneumann}
The $p!$ vertices of the $p$th Birkhoff polytope $B_p$ are the $0$-$1$ permutation matrices of size $p \times p$.
\end{theorem}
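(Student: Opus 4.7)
The plan is to use the tools already assembled in the excerpt: the integrality corollary (integer marginals force integer vertices) together with Lemma~\ref{lemma:vertexBx} (vertices are characterized by their support graph being a spanning forest). Since the marginals $u=v=(1,\dots,1)^T$ are integral, every vertex $x$ of $B_p$ lies in $\Z^{p\times p}$. Combined with $x_{i,j}\geq 0$ and $\sum_j x_{i,j}=1$, each row has entries in $\{0,1\}$ summing to $1$, so exactly one entry per row equals $1$; the same argument on columns then forces $x$ to be a permutation matrix. This direction recovers all $p!$ candidates.

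For the reverse direction, I would take an arbitrary permutation matrix $P_\pi$ associated to $\pi\in S_p$ and check that it is in fact a vertex of $B_p$. Its support set is $\{(i,\pi(i))\mid i\in[p]\}$, and the corresponding support graph $B(P_\pi)$ in $K_{p,p}$ is a perfect matching. A perfect matching on $2p$ vertices consists of $p$ disjoint edges with no cycles, hence is a spanning subgraph of $K_{p,p}$ that is a forest. By Lemma~\ref{lemma:vertexBx}, $P_\pi$ is a vertex of $B_p$. Counting yields exactly $p!$ vertices.

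The main subtlety to watch out for is that $B_p$ is \emph{not} generic in the sense of~\eqref{equation:nondegenerateclassicalequation}: taking $Y=Z\subsetneq[p]$ shows the marginals on two equal-size subsets coincide, so the stronger spanning-tree characterization in Lemma~\ref{lemma:vertexBx} does not apply. We must rely on the general spanning-forest statement, which is why the permutation matrices appear as degenerate vertices (their supports have size $p$, not $2p-1$). Aside from this point, the proof requires no calculation beyond the two observations above, so I would present it as a short two-paragraph argument invoking the integrality corollary and Lemma~\ref{lemma:vertexBx} in sequence.
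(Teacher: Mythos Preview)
Your proposal is correct. The paper does not actually supply a proof of Theorem~\ref{theorem:birkhoffvonneumann}; it only cites Birkhoff, von~Neumann, Steinitz, and K{\H{o}}nig and moves on. Your argument is a clean self-contained derivation from the machinery the paper has already assembled: the integrality corollary (vertices are integral when the marginals are, via total unimodularity in Lemma~\ref{lemma:classicalrankdimension}) handles the forward direction, and the spanning-forest characterization of vertices in Lemma~\ref{lemma:vertexBx} handles the reverse. Your remark that $B_p$ is not generic---so that one must invoke the forest version of Lemma~\ref{lemma:vertexBx} rather than the spanning-tree version---is exactly the one subtlety worth flagging, and you have handled it correctly.
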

In other words, the vertices of the Birkhoff polytope are the permutation matrices, so every doubly stochastic matrix is a convex combination of permutation matrices. This theorem was proved by Birkhoff in~\cite{Birkhoff:TresObservaciones} and proved independently by von Neumann (see~\cite{Neumann:A-certain-zero-sum}). Equivalent results were shown earlier in the thesis~\cite{Steinitz:Uber-die-Konstruction} of Steinitz, and the theorem also follows from~\cite{Konig:Grafok-es-alkalmazasuk} and~\cite{Konig:Uber-Graphen} by K{\H{o}}nig. For a more complete discussion, see the preface to~\cite{Lovasz:Matching}. See also the papers  \cite{Brualdi:DStochasticI, Brualdi:DStochasticII, Brualdi:DStochasticIII, Brualdi:DStochasticIV}, where various various combinatorial and geometric properties of the Birkhoff polytope were studied such as its graph. Of course due to the above theorem, Birkhoff's polytopes play an important role in combinatorics and discrete optimization and the literature about their properties is rather large. 

We also want to know how many vertices a transportation polytope can have. In
particular there is a visible difference in behavior between generic and
non-generic polytopes. How about maximum number of
vertices? The exact formula is complicated but the following result
of Bolker in~\cite{Bolker:Transportation-polytopes} can serve as a reference:
\begin{lemma}[Bolker, \cite{Bolker:Transportation-polytopes}]
The maximum possible number of vertices among $p \times q$ transportation polytopes is achieved by the central transportation polytope whose marginals are 
$u=(q,q,\dots,q)$ and $v=(p,p,\dots,p)$.
\end{lemma}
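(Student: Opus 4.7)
My plan is to reduce to the non-degenerate case, translate vertex counting into a count of spanning trees of $K_{p,q}$ satisfying certain inequalities in the margins, and then exploit the symmetry of the central margin to argue that it maximizes this count.

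First, using the perturbation argument recalled in the paragraph preceding Corollary~\ref{corollary:classicalTPsizeofsupport}, I would replace any $p \times q$ transportation polytope $P(u,v)$ by a non-degenerate perturbation $P(u',v')$ with at least as many vertices, and likewise replace the central polytope by a generic perturbation $P^{*}$; it then suffices to compare vertex counts among non-degenerate polytopes. By Lemma~\ref{lemma:vertexBx} and Corollary~\ref{corollary:classicalTPsizeofsupport}, in such a polytope the vertices correspond bijectively to the spanning trees $T$ of $K_{p,q}$ for which the unique table $x_T$ supported on $T$ and satisfying the marginal equations is non-negative.

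Using the triangular structure of a spanning tree, for each edge $e=(i,j) \in T$ the entry $(x_T)_{i,j}$ equals $\sum_{k \in Y_{T,e}} u_k - \sum_{\ell \in Z_{T,e}} v_\ell$, where $(Y_{T,e},Z_{T,e})$ is the bipartition into supply/demand nodes of one component of $T \setminus e$. Thus the set of margins for which $T$ contributes a vertex is a polyhedral cone $\mathcal{F}_T$ cut out by $p+q-1$ linear inequalities, and for generic $(u,v)$ the vertex count equals $\#\{T : (u,v) \in \mathcal{F}_T^{\circ}\}$. The claim then reduces to showing that the (perturbed) central margin lies in the interiors of the greatest number of cones $\mathcal{F}_T$.

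The main obstacle is establishing maximality at the central margin. My strategy would be to use the symmetry of the central margin under the $S_p \times S_q$ action on supply and demand indices, which permutes spanning trees while preserving the family $\{\mathcal{F}_T\}$. For an arbitrary generic margin $(u,v)$, averaging over its $S_p \times S_q$-orbit recovers the central margin, and the plan is to show by elementary swap moves (replacing a pair of coordinates by their common average) that each such step does not decrease the number of cones $\mathcal{F}_T$ containing the slightly perturbed margin. Since the count is integer-valued and not obviously concave in $(u,v)$, this requires a combinatorial argument producing a matching between spanning trees that lose feasibility under a swap and trees that gain it. Constructing such a matching uniformly in $T$, and handling the boundary cases that arise precisely when an inequality $\sum_{k \in Y} u_k \geq \sum_{\ell \in Z} v_\ell$ becomes an equality (which is exactly the failure of the genericity condition~\eqref{equation:nondegenerateclassicalequation}), is where I expect the bulk of the technical work to lie.
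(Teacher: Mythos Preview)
The paper does not actually prove this lemma; it is stated with a citation to Bolker's 1972 paper and no argument is given. So there is no ``paper's proof'' to compare against, and your proposal must be judged on its own merits.

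Your reduction steps are sound: passing to a non-degenerate perturbation, identifying vertices with spanning trees $T$ of $K_{p,q}$ whose associated table $x_T$ is non-negative, and writing each entry of $x_T$ as a signed partial sum of margins (hence realizing the feasibility region of each $T$ as a cone $\mathcal{F}_T$) are all correct and standard. The reformulation ``the central margin lies in the interior of the most cones $\mathcal{F}_T$'' is exactly the right target.

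The genuine gap is the maximality step. You propose to move from an arbitrary margin toward the central one by successive coordinate-averaging swaps and to show each swap does not decrease the count via a matching between trees that lose feasibility and trees that gain it. But you do not construct this matching, and there is no evident reason it exists swap-by-swap. Concretely, if $\sigma$ is the transposition of two supply coordinates and $u' = \tfrac12(u+u^{\sigma})$, convexity gives only that trees feasible for \emph{both} $(u,v)$ and $(u^{\sigma},v)$ remain feasible for $(u',v)$; for a tree $T$ feasible for $(u,v)$ alone, neither $T$ nor $\sigma(T)$ need be feasible for $(u',v)$, since the intervals $\mathcal{F}_T$ and $\mathcal{F}_{\sigma(T)}$ (projected to the one-parameter family joining $u$ and $u^{\sigma}$) may both miss the midpoint. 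So the local monotonicity you need is not a formal consequence of symmetry or convexity, and your own remark that the count is ``not obviously concave'' correctly flags that the averaging heuristic does not close the argument. Without a concrete combinatorial mechanism (for instance, an explicit injection from feasible trees at an extreme margin into feasible trees at an adjacent, more balanced margin, built from the tree structure rather than from abstract symmetry), the proposal remains a plan rather than a proof. Bolker's original argument supplies such a mechanism; your outline does not yet.
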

Indeed one can characterize which transportation polytopes reach the
largest possible number of vertices. (See results by Yemelichev, Kravtsov and collaborators from the 1970's mentioned in \cite{YKK}.)

\begin{question}
What are the possible values for the number of vertices of a generic $p \times q$ transportation polytope? Are there gaps or do all integer values on a interval occur?
\end{question}
A partial answer to this question is provided in Table~\ref{table:classical-f0}, with more detail available at~\cite{tpdb}. Another partial answer, given in~\cite{DeLoera:GraphsTP}, is:
\begin{theorem}\label{theorem:classical-gcd}
The number of vertices of a non-degenerate $p \times q$
classical transportation polytope is divisible by $\operatorname{gcd}(p,q)$.
\end{theorem}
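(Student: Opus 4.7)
The plan is to derive the divisibility from a free $\mathbb{Z}/d$-action on the vertex set of $P$, where $d=\gcd(p,q)$. By Lemma~\ref{lemma:vertexBx} the vertices of a non-degenerate $p\times q$ transportation polytope are in bijection with the spanning trees of $K_{p,q}$ whose basic solution is strictly positive, so it suffices to produce a fixed-point-free action of $\mathbb{Z}/d$ on the set $\mathcal{F}(u,v)$ of such feasible trees: orbit-stabilizer would then yield $d\mid|\mathcal{F}(u,v)|=|V(P)|$.

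The natural candidate is the diagonal cyclic shift $\sigma\colon(i,j)\mapsto(i+p/d\bmod p,\ j+q/d\bmod q)$, a graph automorphism of $K_{p,q}$ of exact order $d$. It fixes no edge, and no spanning tree of $K_{p,q}$ can be $\sigma$-invariant, because the $p+q-1$ edges of such a tree would partition into $\sigma$-orbits of length $d$, yet $p+q-1\equiv -1\pmod d$. Therefore $\sigma$ acts freely on all spanning trees. The genuine obstacle is that $\sigma$ fails to preserve $\mathcal{F}(u,v)$ for a fixed pair of marginals: it carries the feasible trees of $P_{u,v}$ to those of the (isomorphic but distinct) polytope $P_{\sigma u,\sigma v}$ with shifted marginals.

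To bridge this gap I would use a continuous deformation argument. Start at a $\sigma$-invariant marginal pair $(u_0,v_0)$ in the non-degenerate locus of the $\sigma$-invariant subspace; such a point exists because the $\sigma$-invariant marginals form a positive-dimensional affine subspace and non-degeneracy is a generic condition there, so a small generic perturbation of the central marginals $u_0=(q,\dots,q)$, $v_0=(p,\dots,p)$ inside the $\sigma$-invariant subspace will do. On $P_{u_0,v_0}$ the map $\sigma$ is a genuine polytope automorphism acting freely on $V(P_{u_0,v_0})$, so $d\mid|V(P_{u_0,v_0})|$. Then connect $(u_0,v_0)$ to $(u,v)$ by a generic path in marginal space and track the invariant $f_0$ as the path crosses walls of non-genericity of the form $\sum_{i\in Y}u_i=\sum_{j\in Z}v_j$.

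The crux of the proof is the wall-crossing analysis: one must show that every such crossing changes $f_0$ by a multiple of $d$. I would attempt this by arguing that the local pivot at each wall is $\sigma$-equivariant across the full $\sigma$-orbit of the wall and creates or annihilates vertices in complete $\sigma$-orbits of cardinality $d$. Making this bookkeeping precise—in particular, properly accounting for the fact that distinct walls in the same $\sigma$-orbit are crossed at different moments by a generic (non-equivariant) path, and ruling out accidental coincidences of walls—is the step that I expect to be the main obstacle, and where the real combinatorial work of the proof lies.
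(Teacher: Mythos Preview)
The survey does not contain a proof of Theorem~\ref{theorem:classical-gcd}; it merely cites~\cite{DeLoera:GraphsTP}. So there is no in-paper argument to compare against, and I evaluate your proposal on its own.

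Your instinct to seek a free $\mathbb{Z}/d$-action is natural, and the observation that $d\nmid p+q-1$ forces the diagonal shift $\sigma$ to act without fixed points on spanning trees is correct. But the deformation scheme collapses before the wall-crossing analysis can even start: for $d>1$ there is \emph{no} non-degenerate $\sigma$-invariant marginal pair. Indeed, let $Y\subset[p]$ contain exactly one element of each $\sigma$-orbit on the supply side (so $|Y|=p/d$) and likewise $Z\subset[q]$ with $|Z|=q/d$. On the $\sigma$-invariant subspace one has $\sum_{i\in Y}u_i=\tfrac{1}{d}\sum_{i} u_i$ and $\sum_{j\in Z}v_j=\tfrac{1}{d}\sum_{j} v_j$, so the wall equation $\sum_{i\in Y}u_i=\sum_{j\in Z}v_j$ reduces to the global balance condition~\eqref{equation:classicalnonempty} and is identically satisfied. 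Since $Y\subsetneq[p]$ and $Z\subsetneq[q]$ when $d>1$, this is a genuine degeneracy wall containing the entire $\sigma$-invariant locus, and your proposed base point $(u_0,v_0)$ does not exist. Worse, at the degenerate $\sigma$-invariant marginals the induced action on vertices need not be free: already for $B_2$ both permutation matrices are $\sigma$-fixed.

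A small example also shows that the divisibility is not witnessed by $\sigma$-orbits of trees at all. For $p=q=2$, $u=(3,1)$, $v=(2,2)$, the two feasible spanning trees of $K_{2,2}$ lie in \emph{different} $\sigma$-orbits of the four spanning trees. So whatever mechanism produces $d\mid f_0$ in general, it is not the restriction of your $\sigma$ to $\mathcal{F}(u,v)$, nor can it be recovered by tracking how $\sigma$-orbits of trees enter and leave feasibility along a path in marginal space. The argument in~\cite{DeLoera:GraphsTP} proceeds along different lines; to repair your approach you would need either a genuinely different free action that respects the \emph{fixed} marginals $(u,v)$, or to abandon the group-action framework altogether.
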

\begin{table}[hbt]
\begin{center}
{\small
\begin{tabular}{|c|c|}
\hline
sizes & Distribution of number of vertices in transportation polytopes \cr \hline         
$2 \times 3$  &  3 4 5 6\cr \hline
$2 \times 4$  &  4 6 8 10 12 \cr \hline
$2 \times 5$  &  5 8 11 12 14 15 16 17 18 19 20 21 22 23 24 25 26 27 28 29 30 \cr \hline
$3 \times 3$  &  9 12 15 18 \cr \hline
$3  \times 4$ & 16 21 24 26 27 29 31 32 34 36 37 39 40 41 42 44 45 46 48 49 50 \cr 
              & 52 53 54 56 57 58 60 61 62 63 64 66 67 68 70 71 72 74 75 76 78 80 84 90 96 \cr \hline
$4  \times 4$ & 108 116 124 128 136 140 144 148 152 156 160 164 168 172 176 180 184 188 192 \cr
              &  196 200 204 208 212 216 220 224 228 232 236 240 244 248 252 256 260 264 268 \cr
              &  272 276 280 284 288 296 300 304 312 320 340 360 \cr \hline
\end{tabular}
}
\end{center}
\caption{Numbers of vertices of $p \times q$ transportation polytopes}\label{table:classical-f0}
\end{table}

The support graph associated to a point of the transportation polytope also characterizes edges of classical transportation polytopes. (See Lemma 4.1 in Chapter 6 of~\cite{YKK}.)
\begin{proposition}
Let $x$ and $x'$ be distinct vertices of a classical transportation polytope $P$. Then the vertices $x$ and $x'$ are adjacent if and only if the graph $B(x) \cup B(x')$ contains a unique cycle.
\end{proposition}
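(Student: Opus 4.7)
The plan is to relate adjacency of $x$ and $x'$ to the dimension of the smallest face of $P$ containing the midpoint $y = \tfrac{1}{2}(x+x')$, and then to translate this dimension into a graph-theoretic invariant of $B(x) \cup B(x')$. Two distinct vertices of a polytope are adjacent if and only if the minimal face containing their midpoint has dimension one, so the proposition will follow once $\dim F$ is expressed in graph-theoretic terms.

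First, since $x, x' \geq 0$, no cancellation occurs in the convex combination, so $\supp(y) = \supp(x) \cup \supp(x')$, and the spanning subgraph of $K_{p,q}$ with this edge set is exactly $B(x) \cup B(x')$. The minimal face of $P$ containing $y$ is
\[
F = \{z \in \R^{pq}_{\geq 0} \mid Az = b,\ z_{i,j} = 0 \text{ for } (i,j) \notin \supp(y)\},
\]
whose dimension equals $|\supp(y)| - \rank(A_{\supp(y)})$, where $A_{\supp(y)}$ is the column submatrix of the constraint matrix indexed by $\supp(y)$.

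Next, I would use the natural extension of part~(\ref{classical-submatrices}) of Lemma~\ref{lemma:classicalrankdimension}: for any edge set $E$ of $K_{p,q}$, one has $\rank(A_E) = (p+q) - c(E)$, where $c(E)$ is the number of connected components of the spanning subgraph $([p] \sqcup [q], E)$, with isolated vertices each counted as their own component. This is the standard rank formula for the vertex--edge incidence matrix of a bipartite graph. Substituting $E = \supp(y)$ yields
\[
\dim F = |E| - (p+q) + c(E) = \mu(B(x) \cup B(x')),
\]
the cyclomatic number of $B(x) \cup B(x')$.

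The vertices $x$ and $x'$ are adjacent precisely when $\dim F = 1$, and a graph has cyclomatic number $1$ if and only if it contains a unique cycle, which delivers both directions. For the forward implication I must also verify that when $\dim F = 1$ the face $F$ is actually the segment $[x,x']$: this holds because $F$ is a $1$-dimensional bounded face (a segment with two extreme points), and by the definition of a face $F$ contains both $x$ and $x'$, which are vertices of $P$ whose supports lie inside $\supp(y)$. The main step requiring care is the rank--component identity for $A_E$, since $E$ is generally not the support of any single vertex of $P$; but once $A_E$ is identified with the vertex--edge incidence matrix of the bipartite graph $([p] \sqcup [q], E)$, this reduces to a routine application of the standard bipartite rank formula.
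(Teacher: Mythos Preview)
Your argument is correct, and it takes a genuinely different route from the paper. The paper does not give a detailed proof; it simply observes that adjacency of vertices is characterized by their bases differing in exactly one element (the standard simplex-method pivot characterization), citing \cite{Matousek:LinearProgramming, Schrijver:LinearProgramming}. You instead compute the dimension of the minimal face containing the midpoint and identify it with the cyclomatic number of $B(x)\cup B(x')$.

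The two approaches trade off as follows. The paper's basis-exchange remark is quick if one already has the LP machinery in hand, but it is slightly awkward in the degenerate case, where a single vertex may correspond to several bases and one must argue that \emph{some} pair of bases differs in one element. Your approach avoids bases altogether and works uniformly for degenerate and non-degenerate $P$, since it deals only with supports. It also yields a stronger statement for free: the smallest face of $P$ containing any two vertices $x,x'$ has dimension equal to the cyclomatic number of $B(x)\cup B(x')$, of which the adjacency criterion is the special case $\mu=1$. The only external fact you invoke beyond the paper's Lemma~\ref{lemma:classicalrankdimension} is the standard rank formula $\rank(A_E)=(p+q)-c(E)$ for vertex--edge incidence matrices of bipartite graphs, which is indeed routine.
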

This can be seen since the bases corresponding to the vertices $x$ and $x'$ differ in the addition and the removal of one element (see~\cite{Matousek:LinearProgramming, Schrijver:LinearProgramming}). 

One can also characterize the facets of the $p \times q$ transportation polytope, which have dimension $(p-1)(q-1)-1$ by Lemma~\ref{lemma:classicalrankdimension}. The following lemma is Theorem 3.1 in Chapter 6 of~\cite{YKK}.
\begin{lemma}\label{lemma:classical-facet-characterization}
Let $P$ be the $p \times q$ transportation polytope ($pq > 4$) defined by marginals $u$ and $v$.
Pick integers $1 \leq i^* \leq p$ and $1 \leq j^* \leq q$.
The subset of points of $P$
\[F_{i^*,j^*} = \{ (x_{i,j}) \in P \mid x_{i^*,j^*} = 0\}\]
is a facet of $P$ if and only if $u_{i^*}+v_{j^*} < \sum_{i=1}^p u_i$.
\end{lemma}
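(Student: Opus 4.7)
The plan is to prove the biconditional by computing the dimension of $F_{i^*,j^*}$ and comparing it to $\dim P = (p-1)(q-1)$ given by Lemma~\ref{lemma:classicalrankdimension}. For the ``only if'' direction I would argue the contrapositive: suppose $u_{i^*}+v_{j^*} \geq \sum_i u_i$. Since $x_{i,j^*} \leq u_i$ (using the $i$-th row-sum equation and non-negativity), the $j^*$-th column-sum equation yields $x_{i^*,j^*} = v_{j^*} - \sum_{i\neq i^*} x_{i,j^*} \geq v_{j^*} - \sum_{i\neq i^*} u_i = u_{i^*}+v_{j^*}-\sum_i u_i$. If the hypothesis is strict, then $x_{i^*,j^*}>0$ throughout $P$ and $F_{i^*,j^*}$ is empty. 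If equality holds, every intermediate inequality must be tight, forcing $x_{i,j^*}=u_i$ for $i\neq i^*$, then $x_{i,j}=0$ for $i\neq i^*$ and $j\neq j^*$, and finally $x_{i^*,j}=v_j$ for $j\neq j^*$; thus $F_{i^*,j^*}$ collapses to a single point. Since $pq>4$ forces $\dim P \geq 2$, a singleton cannot be a facet.

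For the ``if'' direction, set $S = \sum_i u_i$ and $D := S - u_{i^*} - v_{j^*} > 0$. I need to show both that the affine hull of $F_{i^*,j^*}$ has dimension $(p-1)(q-1)-1$ and that $F_{i^*,j^*}$ has a point in the relative interior of this affine hull. For the affine hull, it suffices to verify that the coefficient vector $E_{i^*,j^*}$ of the equation $x_{i^*,j^*}=0$ does not lie in the row span of the constraint matrix $A$. A candidate relation has the form $\alpha_i+\beta_j = [i=i^*, j=j^*]$ at every entry $(i,j)$; evaluating at an entry with $i\neq i^*$ and $j\neq j^*$ (which exists because $p,q\geq 2$) together with the entries along the ``crosshair'' of $(i^*,j^*)$ forces $\alpha_{i^*}+\beta_{j^*}=0$, contradicting the required value $1$ at $(i^*,j^*)$. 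Hence adding $x_{i^*,j^*}=0$ cuts the rank up by one and drops the affine-hull dimension by exactly one.

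For the relative interior, I would start from the central point $x^{(0)}_{i,j} = u_iv_j/S$, which has all entries strictly positive, and drive $x_{i^*,j^*}$ down to $0$ using $4$-cycle moves. For each $(i'',j')$ with $i''\neq i^*, j'\neq j^*$, the move that subtracts a value $\alpha_{i'',j'}$ from both $x_{i^*,j^*}$ and $x_{i'',j'}$ while adding $\alpha_{i'',j'}$ to both $x_{i^*,j'}$ and $x_{i'',j^*}$ preserves every marginal. Choosing $\alpha_{i'',j'} = x^{(0)}_{i^*,j^*} \cdot x^{(0)}_{i'',j'}/(D + x^{(0)}_{i^*,j^*})$ and invoking the identity $\sum_{i\neq i^*, j\neq j^*} x^{(0)}_{i,j} = D + x^{(0)}_{i^*,j^*}$ (inclusion-exclusion on row $i^*$ and column $j^*$) yields $\sum_{i'',j'} \alpha_{i'',j'} = x^{(0)}_{i^*,j^*}$, which zeroes the target entry, while $\alpha_{i'',j'} < x^{(0)}_{i'',j'}$ strictly because $D>0$, so every other entry of the perturbed point stays strictly positive. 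The main obstacle is precisely this constructive step: the strict inequality $u_{i^*}+v_{j^*}<S$ supplies exactly the slack needed to keep the perturbation off every other non-negativity facet, and checking that the cycle-move feasibility matches the hypothesis on the nose is the crux of the argument.
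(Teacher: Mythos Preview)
Your argument is correct. The paper does not actually supply a proof of this lemma; it merely cites it as Theorem~3.1 in Chapter~6 of \cite{YKK}, so there is no in-paper proof to compare against.

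A few remarks on your write-up. The contrapositive direction is clean; your chain of forced equalities in the borderline case $u_{i^*}+v_{j^*}=S$ does pin $F_{i^*,j^*}$ down to a single point, and the hypothesis $pq>4$ (with the standing assumption $p,q\geq 2$) gives $\dim P=(p-1)(q-1)\geq 2$, so a vertex cannot be a facet. For the forward direction, your linear-independence argument correctly shows that $E_{i^*,j^*}$ is not in the row span of $A$, so the affine hull of $\{Ax=b,\ x_{i^*,j^*}=0\}$ has dimension exactly $(p-1)(q-1)-1$. The constructive step is the heart of the matter, and your choice $\alpha_{i'',j'}=x^{(0)}_{i^*,j^*}\,x^{(0)}_{i'',j'}/(D+x^{(0)}_{i^*,j^*})$ together with the inclusion--exclusion identity $\sum_{i''\neq i^*,\,j'\neq j^*} x^{(0)}_{i'',j'}=D+x^{(0)}_{i^*,j^*}$ works exactly as you say: the $(i^*,j^*)$ entry is driven to zero, the off-cross entries are scaled by the strictly positive factor $D/(D+x^{(0)}_{i^*,j^*})$, and the cross entries only increase. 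This exhibits a point of $F_{i^*,j^*}$ with every other coordinate strictly positive, hence lying in the relative interior of the face, and the dimension count finishes the job. You might state explicitly at the end that $F_{i^*,j^*}$ is automatically a \emph{face} because $x_{i^*,j^*}\geq 0$ is one of the defining inequalities of $P$, so the dimension computation alone suffices to conclude it is a facet.
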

\begin{figure}[hbt]
\begin{center}
\[
\xy
(0,0); (15,0) **\dir{-}; 
(0,-5); (15,-5) **\dir{-}; 
(0,-10); (15,-10) **\dir{-}; 
(0,-15); (15,-15) **\dir{-}; 
(0,0); (0,-15) **\dir{-}; 
(5,0); (5,-15) **\dir{-}; 
(10,0); (10,-15) **\dir{-}; 
(15,0); (15,-15) **\dir{-}; 
(12.5,-12.5)*+{\checkmark}; 
(2.5,-2.5)*+{\times}; 
(19,-2.5)*+{100}; 
(18,-7.5)*+{6}; 
(18,-12.5)*+{6}; 
(2.5,-18)*+{38}; 
(7.5,-18)*+{37}; 
(12.5,-18)*+{37}; 
\endxy
\]
\end{center}
\caption{The equation $x_{3,3}=0$ defines a facet, while the top-left corner entry corresponds to an equation $x_{1,1}=0$ that does not.}\label{figure:facetsvsnonfacet}
\end{figure}
See Figure~\ref{figure:facetsvsnonfacet} for an example. From this basic characterization we see:
\begin{corollary}\label{corollary:classicalnumberfacets}
For $2 \leq p \leq q$ and $q\geq 3$, the possible 
number of facets of a $p \times q$ transportation polytope is a number
of the form $(p-1)q+k$ for $k=0,\dots,q$ and only such integers can occur.
\end{corollary}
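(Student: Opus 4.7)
The plan is to apply Lemma~\ref{lemma:classical-facet-characterization} directly. Setting $N := \sum_i u_i = \sum_j v_j$, the lemma puts the facets of $P$ in bijection with the pairs $(i,j) \in [p] \times [q]$ satisfying $u_i + v_j < N$. Writing $m$ for the number of ``bad'' pairs, those with $u_i + v_j \geq N$, the facet count equals $pq - m = (p-1)q + (q - m)$, so the statement reduces to showing $0 \leq m \leq q$ together with realizability of each intermediate value. The lower bound is vacuous; the content is the upper bound $m \leq q$.

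For that bound I would form the bipartite graph $G$ with vertex parts $[p]$ and $[q]$ whose edges are the bad pairs, and prove the key lemma: any two edges of $G$ share a vertex. Given two candidate bad edges $(i_1,j_1)$ and $(i_2,j_2)$ with $i_1 \neq i_2$ and $j_1 \neq j_2$, summing $u_{i_1}+v_{j_1} \geq N$ and $u_{i_2}+v_{j_2} \geq N$ and comparing with the global bounds $u_{i_1}+u_{i_2} \leq N$ and $v_{j_1}+v_{j_2} \leq N$ forces both of the latter inequalities into equality, hence $v_j = 0$ for every $j \notin \{j_1,j_2\}$. This contradicts $q \geq 3$ together with positivity of the marginals, which we may assume without loss of generality since any zero marginal collapses a full row or column and reduces to a smaller problem. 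A bipartite graph whose edges pairwise meet must be either empty or a star (bipartite graphs have no triangles), and a star in $G$ has at most $q$ edges if its center is on the supply side and at most $p \leq q$ edges if its center is on the demand side. Hence $m \leq q$ in all cases.

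To see that every value $(p-1)q + k$ with $k \in \{0,\dots,q\}$ is actually achieved, I would exhibit margins whose bad-pair star has exactly $q-k$ leaves: choose $u_1$ much larger than the other $u_i$'s and pick the $v_j$'s so that exactly $q-k$ of them exceed the threshold required to form a bad pair with $u_1$, while no $v_j$ is large enough to pair badly with any $u_i$ for $i \geq 2$. This is a routine one-parameter construction that I would sketch rather than grind through.

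The main obstacle is the boundary case $p = 2$ of the key lemma. There the inequality $u_{i_1}+u_{i_2} \leq N$ is in fact an equality, so the entire strict gap must be squeezed out of $v_{j_1}+v_{j_2} < N$; this is precisely where the hypothesis $q \geq 3$ gets consumed, by guaranteeing some $v_j > 0$ with $j \notin \{j_1,j_2\}$. Once that subtlety is flagged, the rest of the argument is routine graph-theoretic and arithmetic bookkeeping on top of the facet bijection provided by Lemma~\ref{lemma:classical-facet-characterization}.
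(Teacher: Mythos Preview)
Your argument is correct and is precisely the natural elaboration of what the paper leaves implicit: the paper simply states that the corollary follows ``from this basic characterization'' (Lemma~\ref{lemma:classical-facet-characterization}) without supplying details, and your counting of bad pairs, the pairwise-intersecting-edges $\Rightarrow$ star observation, and the explicit realizability construction fill that in cleanly. The only point worth noting is that your ``without loss of generality positive marginals'' reduction is not merely a convenience but matches the paper's standing convention (cf.\ Lemma~\ref{lemma:vertexBx}, where $u \in \R^p_{>0}$, $v \in \R^q_{>0}$ is assumed), so there is no circularity or lost case there.
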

For example, $3 \times 3$ transportation polytopes can have $6$, $7$, $8$, or $9$ facets and only these values occur. 

\subsubsection{Diameter of graphs of transportation polytopes}

Now we study a classical question about the graphs of transportation polytopes. Recall that the \emph{distance} between two vertices $x,y$ of a polytope $P$ is the minimal number $\operatorname{dist}_P(x,y)$ of edges needed to go from $x$ to $y$ in the graph of $P$. The \emph{diameter} of a polytope is the maximum possible distance between pairs of vertices in the graph of the polytope. Though the Hirsch Conjecture was finally shown to be  false in general for polytopes  (see~\cite{Santos:CounterexampleHirsch}), the problem is still unsolved for transportation polytopes, and diameter bounds 
for this special class of polytopes are very interesting. Dyer and Frieze (see~\cite{Dyer:RandomWalks}) gave the first polynomial diameter bound for totally unimodular polytopes which applies to classical transportation polytopes (and more generally to network polytopes), but this was recently improved by Bonifas et al. in~\cite{bonifasetal}.

The diameters of classical transportation polytopes and their applications (see, e.g.,~\cite{CDMS}) have been studied extensively. In~\cite{Balinski:DualTransportation}, Balinski proved that the Hirsch Conjecture holds and is tight for dual transportation polyhedra.  For the specific case of transportation polytopes Yemelichev, Kovalev, and Kravtsov (see Theorem 4.6 in Chapter 6 of~\cite{YKK} and the references therein) and Stougie (see~\cite{Stougie:PolynomialBound}) presented improved polynomial bounds. This was improved to a quadratic bound by van~den~Heuvel and Stougie in~\cite{vandenHeuvel:QuadraticBound}, and further improved to a linear bound: 

\begin{theorem}[Brightwell, van den Heuvel, Stougie~\cite{Brightwell:LinearTransportation}] \label{stougieetal}
The diameter of every $p \times q$ transportation polytope is at most $8(p+q-2)$.
\end{theorem}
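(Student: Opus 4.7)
The natural approach is induction on $p+q$, with the size of the polytope shrinking by one each time while spending a constant number of pivot steps per reduction; since the target bound is $8(p+q-2)$, the budget allows for $8$ pivots per inductive step. For the base case I would handle $p+q \leq 4$ directly: by Lemma~\ref{lemma:classicalrankdimension} such polytopes have dimension at most one, so the bound holds trivially. For the inductive step, given two vertices $x, y$ of a $p \times q$ transportation polytope $P$, I would aim to walk from each of $x$ and $y$, using at most $4$ edges of the graph of $P$, to new vertices $x^\ast$ and $y^\ast$ that agree on a full row (or column). Once such a row is fixed, the set of points of $P$ with that row pinned at the common value is itself (after subtracting off that row and updating the column marginals) a $(p-1) \times q$ classical transportation polytope $P'$; the vertices $x^\ast, y^\ast$ restrict to vertices of $P'$, and by induction they are joined in $P'$ by a walk of length at most $8(p+q-3)$. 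Combined with the two initial walks this gives $\operatorname{dist}_P(x,y) \leq 4 + 4 + 8(p+q-3) = 8(p+q-2)$, as required. By the perturbation argument described after Corollary~\ref{corollary:classicalTPsizeofsupport}, it suffices to treat the generic case, where by Lemma~\ref{lemma:vertexBx} every support graph is a spanning tree on $K_{p,q}$.

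The key lemma I would isolate is a \emph{flushing} statement: for any vertex $z$ of $P$ and any row index $i^\ast$ selected so that $u_{i^\ast}$ is minimal, and for a carefully chosen column $j^\ast$ with $v_{j^\ast} \geq u_{i^\ast}$, there is a walk of length at most $4$ in the graph of $P$ from $z$ to a vertex $z^\ast$ with $z^\ast_{i^\ast,j^\ast} = u_{i^\ast}$ and $z^\ast_{i^\ast,j} = 0$ for $j \neq j^\ast$. This pushes the entire supply of row $i^\ast$ into a single column. The proof exploits the spanning-tree structure of $B(z)$: removing the single edge $(\sigma_{i^\ast}, \delta_{j^\ast})$ (or adding it if absent) creates a unique cycle, and cycling flow around it corresponds to one pivot of the simplex method (by the edge characterization from Lemma 4.1 in Chapter 6 of~\cite{YKK} recalled above). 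I would argue that in at most two such cycle moves we can eliminate all but one nonzero in row $i^\ast$; the constant $4$ provides slack for the case that some intermediate vertex is on the boundary of $P$ and the chosen pivot creates new constraints. Applying the flushing lemma to $x$ with some column $j^\ast$ and to $y$ with the same column $j^\ast$ yields the matched vertices $x^\ast$ and $y^\ast$ agreeing on row $i^\ast$.

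The hard part is two-fold. First, one must \emph{choose} the column $j^\ast$ so that it is simultaneously reachable from both $x$ and $y$ by a short walk — i.e., the same target works on both sides. Here I expect one would argue by counting: among the $q$ columns, an averaging or pigeonhole argument on the support trees $B(x)$ and $B(y)$ should produce at least one column for which both flushing walks have length $\leq 4$, provided $i^\ast$ was chosen so that $u_{i^\ast}$ is the smallest marginal (which guarantees that any column with $v_{j^\ast} \geq u_{i^\ast}$ has enough capacity to absorb the flushed supply without violating non-negativity along intermediate vertices). Second, one must verify that all intermediate tables produced by the pivots stay in $P$ and are genuine vertices, not just feasible points; this requires tracking the support trees through each cycle rotation and confirming that each rotation terminates at a vertex, so that each step is a single edge of the graph of $P$ in the sense of Section 2. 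Getting the constant down to precisely $4$ per side (and thus $8$ overall) is the delicate combinatorial core of the argument; a looser analysis would yield a larger linear constant, but the same induction scheme would still produce a linear diameter bound.
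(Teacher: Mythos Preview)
Your overall induction scheme is exactly the one the paper describes: reduce $p+q$ by one per step, spending at most $8$ pivots to align the two vertices on a common leaf of their support trees, then recurse on the smaller transportation polytope. Where your proposal diverges from the paper, and where it has a real gap, is in the formulation of the ``flushing'' step.

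You fix the row $i^\ast$ \emph{a priori} by taking $u_{i^\ast}$ minimal, and then assert that for \emph{any} vertex $z$ one can make $\sigma_{i^\ast}$ a leaf of $B(z)$ (attached to a prescribed $\delta_{j^\ast}$) in at most four pivots. This cannot hold: each pivot inserts one edge and deletes one edge of the spanning tree, so reducing $\deg_{B(z)}(\sigma_{i^\ast})$ from $d$ to $1$ costs at least $d-1$ pivots, and nothing about the size of the marginal $u_{i^\ast}$ bounds $d$. A supply node with the smallest marginal can still have degree as large as $q$ in a particular spanning tree. Your proposed pigeonhole over columns does not help either, since the obstruction is the degree of $\sigma_{i^\ast}$, which is independent of which target column $j^\ast$ you aim for. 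So as stated the flushing lemma is false, and the argument does not even yield a linear bound, let alone the constant $8$.

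The paper's crucial lemma fixes this by choosing the pair $(\sigma,\delta)$ \emph{from the structure of the two support trees} $B(x)$ and $B(y)$ rather than from the marginals: one finds a demand node $\delta$ and a supply node $\sigma$ for which, after at most eight pivots distributed between the two sides, $\delta$ is a leaf attached to $\sigma$ in both $B(x')$ and $B(y')$. The point is that spanning trees always have many leaves, so one can start from a node that is already a leaf (or nearly so) on one side and spend the pivot budget making the other side match; the work is in showing that a compatible choice exists with total cost $\le 8$. Your outline becomes correct once the selection of the node to isolate is made tree-dependent in this way, but the marginal-based selection you propose is the wrong mechanism.
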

The bound follows from a crucial lemma which bounds the graph distance $\operatorname{dist}_P(y,y')$ between any two vertices $x$ and $y$ of a $p \times q$ transportation polytope $P$, by constructing vertices $x'$ and $y'$ of $P$ and nodes $\sigma, \delta$ of $K_{p,q}$ such that $\deg_{B(x')}(\delta)=\deg_{B(y')}(\delta)=1$, $(\sigma,\delta) \in B(x') \cap B(y')$, and $\dist_P(x,x')+\dist_P(y,y') \leq 8$. In the arguments below, there is an important distinction between {\bf vertices} of the polytope $P$ (which we always denote by $x$ or $y$) and {\bf nodes} of the support graph $B(x) \subset K_{p,q}$ of a vertex $x$ of $P$ (which we always denote by $\sigma$ or $\delta$).

Theorem \ref{stougieetal}  was further improved by Cor Hurkens~\cite{Hurkens:Diameter4p}.

\begin{theorem}[Hurkens~\cite{Hurkens:Diameter4p}]
The diameter of every $p \times q$ transportation polytope is at most $4(p+q-2)$.
\end{theorem}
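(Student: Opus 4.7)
The plan is to prove the bound by induction on $p+q$, paralleling the inductive skeleton behind Theorem~\ref{stougieetal} but with the constant in its key reduction lemma sharpened from $8$ down to $4$. Using the perturbation argument summarized in the paragraph after Corollary~\ref{corollary:classicalTPsizeofsupport}, it suffices to handle non-degenerate polytopes, where every vertex $z$ satisfies $|\supp(z)|=p+q-1$ and $B(z)$ is a spanning tree of $K_{p,q}$. Base cases are trivial: whenever $\min(p,q)=1$ the polytope is a point, and when $p=q=2$ it is a one-dimensional segment of diameter $1\leq 4(p+q-2)$.

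For the inductive step I would prove the following sharpened reduction lemma: for any two vertices $x,y$ of a non-degenerate $p\times q$ transportation polytope $P$ there exist vertices $x', y'\in P$ together with a supply node $\sigma$ and a demand node $\delta$ of $K_{p,q}$ such that
\begin{enumerate}
\item[(a)] $\dist_P(x,x')+\dist_P(y,y')\leq 4$, and
\item[(b)] $(\sigma,\delta)\in B(x')\cap B(y')$ with $\deg_{B(x')}(\delta)=\deg_{B(y')}(\delta)=1$,
\end{enumerate}
(or the symmetric statement with the roles of supply and demand reversed). Condition (b) forces $x'_{\sigma,\delta}=y'_{\sigma,\delta}=v_\delta$, so deleting column $\delta$ and replacing $u_\sigma$ by $u_\sigma-v_\delta$ sends $x',y'$ to vertices $\bar x,\bar y$ of a $p\times(q-1)$ non-degenerate transportation polytope $\bar P$. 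Since every edge pivot in $\bar P$ lifts to an edge pivot in $P$ (keeping the $\delta$-column frozen at its leaf value), we get $\dist_P(x',y')\leq \dist_{\bar P}(\bar x,\bar y)\leq 4(p+q-3)$ by the inductive hypothesis, and the triangle inequality closes the loop: $\dist_P(x,y)\leq 4+4(p+q-3)=4(p+q-2)$.

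The heart of the proof, and the place where the improvement from $8$ to $4$ actually lives, is establishing (a). My strategy would be to analyze the combined support structure of $B(x)$ and $B(y)$ and to locate --- via a pigeonhole or averaging argument over all supply and demand nodes --- a node $\delta$ whose combined degree $\deg_{B(x)}(\delta)+\deg_{B(y)}(\delta)$ is small and which lies on short cycles in $B(x)\cup B(y)$. Each edge pivot in a transportation polytope rotates flow around the unique cycle created by adding one edge to a spanning-tree support; choosing cycles through $\delta$ peels one edge off its star per pivot, so two pivots on each side can in principle trim $\delta$ to a leaf in both support graphs.

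The delicate part, which I expect to be the main obstacle, is synchronizing the two independent pruning sequences so that they leave $\delta$ as a leaf incident to the \emph{same} surviving edge $(\sigma,\delta)$, rather than two different edges of the $\delta$-star, all within the tight budget of four pivots. This will require a careful case analysis on the pair $(\deg_{B(x)}(\delta),\deg_{B(y)}(\delta))$ and on the configuration of alternating cycles of $B(x)\cup B(y)$ through $\delta$; in particular, one expects to have to treat separately the case where $B(x)$ and $B(y)$ already share an edge at $\delta$ (so at most one side needs pivoting to align) and the case where the last pivot on one side must be chosen specifically so as to introduce an edge that the last pivot on the other side also introduces. Establishing that such a choice always exists within four pivots is where the sharper combinatorial insight beyond the $8$-bound of Brightwell--van~den~Heuvel--Stougie is needed.
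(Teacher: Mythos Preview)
Your inductive skeleton is sound, but the heart of the argument---your reduction lemma with the bound $\dist_P(x,x')+\dist_P(y,y')\leq 4$---is not proved, and your proposed route to it diverges from Hurkens' actual argument in two structural ways that matter.

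First, Hurkens does \emph{not} pivot both vertices. He fixes $x$ entirely and applies pivots only to $y$. The tree $B(x)$ is used solely to identify a supply node $\sigma$ together with $r\geq 1$ demand nodes $\delta_1,\dots,\delta_r$ that are already leaves of $B(x)$ attached to $\sigma$; all the work then happens on the $y$ side, pivoting to make those same $\delta_k$ into leaves of $B(y')$ attached to $\sigma$. So the synchronization problem you worry about---forcing both trimmed trees to keep the \emph{same} edge $(\sigma,\delta)$---simply does not arise: the target edge is dictated by $B(x)$ from the start.

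Second, and more importantly, Hurkens' bound is \emph{amortized}: he shows $\dist_P(y,y')\leq 4r$ for the $r$ leaves simultaneously, not $4$ pivots per leaf. The induction then peels off all $r$ columns at once, yielding $4r + 4(p+q-2-r)=4(p+q-2)$. Your formulation instead demands the stronger per-step statement that a \emph{single} common leaf can always be manufactured in at most $4$ pivots total. That is precisely the ``delicate part'' you flag as the main obstacle, and you have not established it; Hurkens' argument does not establish it either, and the very fact that he resorts to an averaging over $r$ leaves suggests that individual leaves may sometimes cost more than $4$ pivots while the average stays at $4$. Without either a proof of your per-leaf bound or a switch to the one-sided amortized formulation, the proposal has a genuine gap at its central step.
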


We present a brief sketch of Hurkens' proof. The result follows immediately from this lemma:
\begin{lemma}[Hurkens~\cite{Hurkens:Diameter4p}]\label{lemma:crucial-hurkens}
For any two vertices $x$ and $y$ of a $p \times q$ transportation polytope $P$, there is an integer $r \geq 1$, a vertex $y'$ of $P$, and nodes $\sigma, \delta_1, \ldots, \delta_r$ of $K_{p,q}$ such that:
\begin{enumerate}
	\item $\deg_{B(x)}(\delta_k)=\deg_{B(y')}(\delta_k)=1$ for $k=1,\dots,r$,
	\item $(\sigma,\delta_k) \in B(x), B(y')$ for $k=1,\dots,r$, and 
	\item $\dist_P(y,y') \leq 4r$.
\end{enumerate}
\end{lemma}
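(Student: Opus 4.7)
The plan is to split the proof into two tasks: (a) identify the configuration $\sigma, \delta_1, \ldots, \delta_r$ using only the support graph $B(x)$, and (b) construct $y'$ from $y$ through a controlled sequence of at most $4r$ polytope pivots. For (a), since $B(x)$ is a spanning forest by Lemma~\ref{lemma:vertexBx}, any component of $B(x)$ with more than one node has demand leaves. I would pick a supply node $\sigma$ of $B(x)$ that maximizes the number $r \geq 1$ of demand-leaf neighbors, and let $\delta_1, \ldots, \delta_r$ be these neighbors. Conditions (1) and (2) of the lemma then hold automatically with respect to $B(x)$, and the row-sum equation at $\delta_k$ forces $x_{\sigma, \delta_k} = v_{\delta_k}$.

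For (b), I would construct a chain of intermediate vertices $y = y^{(0)}, y^{(1)}, \ldots, y^{(r)} = y'$ along edges of $P$, where $y^{(k)}$ is reached from $y^{(k-1)}$ via at most four pivots, and $B(y^{(k)})$ already exhibits $\delta_1, \ldots, \delta_k$ as leaves adjacent to $\sigma$. To process a single leaf $\delta_k$: first, if $(\sigma, \delta_k) \notin B(y^{(k-1)})$, augment along the unique cycle formed in $B(y^{(k-1)}) \cup \{(\sigma, \delta_k)\}$ so that this edge enters the support; then run further pivots whose augmenting cycles route flow from the other positive entries $y^{(k-1)}_{\sigma', \delta_k}$ (with $\sigma' \neq \sigma$) down to zero, while never introducing new edges at the already-isolated $\delta_1, \ldots, \delta_{k-1}$. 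The latter constraint is compatible because those $\delta_j$ have degree one in the current support, so no forest cycle through $\delta_j$ can form unless we choose to add an edge at $\delta_j$, which we avoid.

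The main obstacle is to establish the bound of four pivots per leaf independently of $\deg_{B(y^{(k-1)})}(\delta_k)$, which could a priori be as large as $p$. A naive strategy that removes one extraneous edge $(\sigma', \delta_k)$ per pivot would yield a bound proportional to the degree rather than a constant. The technical heart is to choose augmenting cycles that simultaneously transfer flow from multiple $(\sigma', \delta_k)$ edges or that exploit the tree structure already accumulated at $\sigma$ during the handling of previous leaves. I would attack this by a case analysis on whether $\sigma$ and $\delta_k$ lie in the same tree of $B(y^{(k-1)})$, whether $(\sigma, \delta_k)$ is already in the support, and on the shape of the forest path between $\delta_k$ and $\sigma$, arguing that in each case the isolation can be achieved in at most four pivots. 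Verifying that such a choice always exists and that the count is tight is the delicate step; this is precisely where Hurkens' improvement over the factor-of-eight bound of Brightwell, van~den~Heuvel, and Stougie should reside.
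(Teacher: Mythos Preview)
Your pivoting procedure in part (b) matches the paper's: add $(\sigma,\delta_k)$ if absent, then pivot in edges $(\sigma',\hat\delta)$ to drive out the unwanted edges $(\sigma',\delta_k)$ one at a time. But your accounting is wrong in a way that cannot be repaired. You try to bound the work for each $\delta_k$ individually by four pivots, and you propose to rescue this by ``augmenting cycles that simultaneously transfer flow from multiple $(\sigma',\delta_k)$ edges.'' In a non-degenerate transportation polytope a pivot adds exactly one edge to the support tree and removes exactly one. Hence isolating a single $\delta_k$ whose degree in $B(y^{(k-1)})$ is $d$ costs at least $d-1$ pivots no matter how cleverly you choose cycles; there is nothing forbidding $d$ from being of order $p$. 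A per-leaf bound of four is simply false.

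The paper is explicit that Hurkens' bound is \emph{amortized}: ``four pivots are required (on average) to construct a common leaf node.'' The total cost of the procedure is essentially $|S^c| + \sum_k \ell_k$, where $\ell_k=\deg_{B(y)\setminus S}(\delta_k)$, and the entire content of the lemma is that one can \emph{choose} $\sigma,\delta_1,\dots,\delta_r$ so that this total is at most $4r$. That choice must therefore look at $B(y)$ as well as $B(x)$: your rule ``pick $\sigma$ in $B(x)$ maximizing the number of demand-leaf neighbours'' ignores $B(y)$ entirely and gives no control over $\sum_k \ell_k$. The missing idea is a counting or averaging argument over candidate supply nodes $\sigma$ (and possibly subsets of their pendant leaves) showing that some choice keeps the combined degree sum small relative to $r$; that is the step the paper flags as ``what remains to show (and we skip it).''
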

The key idea that Hurkens showed is that four pivots are required (on average) to construct a common leaf node. More specifically, Hurkens proved this lemma by showing that for any two vertices $x$ and $y$ of a transportation polytope $P$, there is a node $\sigma$ in $K_{p,q}$ (which can be assumed to be a supply) with $r$ incident edges $(\sigma,\delta_1),\dots,(\sigma,\delta_r)$ in $B(x)$ where $\delta_1,\dots,\delta_r$ are all leaf nodes (which are necessarily demands) of $K_{p,q}$. Moreover, the nodes $\sigma,\delta_1,\dots,\delta_r$ of $K_{p,q}$ identified in Hurkens' algorithm also satisfy the property that if 
\[S := \{(\sigma,\delta_k) \mid (\sigma,\delta_k) \in B(y), \,\, k=1,\dots,r\},\]
then there is a vertex $y'$ of $P$ obtained after at most $4r$ pivots from the vertex $y$ of $P$ such that $B(x)$ and $B(y')$ have $r$ common leaf nodes.

In the algorithm of Brightwell, van den Heuvel, and Stougie (see~\cite{Brightwell:LinearTransportation}), pivots are applied to vertices $x$ and $y$ of $P$, resulting in new vertices $x'$ and $y'$ of $P$. A key difference in Hurkens' algorithm in~\cite{Hurkens:Diameter4p} is that pivots are only applied to \emph{one} of the two vertices $x$ and $y$ of $P$. Without loss of generality, pivots are applied to the vertex $y$ of $P$ and not applied to the vertex $x$ of $P$. Thus, we do not describe the vertex $x$ further. Other than the property  that the demand nodes $\delta_1,\dots,\delta_r$ are leaf nodes in $B(x)$ adjacent to the node $\sigma$, the structure of $B(x)$ may be arbitrary.

We label the relevant supply and demands nodes participating in pivots. For each $k=1,\dots,r$ let $(\sigma_{k,n},\delta_k)$ for $n=1,\dots, \ell_k$ be the edges in $B(y) \setminus S$ incident to $\delta_k$, where $\ell_k = \deg_{B(y)\setminus S}(\delta_k)$. 
Let $(\sigma,c_q)$ be the edges in $B(y) \setminus S$ incident to $\sigma$ for $q=1,\dots,t$ where $t=\deg_{B(y)\setminus S}(\sigma)$. See Figure~\ref{figure:hurkens-lemma-2}.
\begin{figure}[hbt]
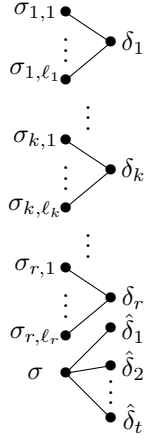

\begin{center}
\[ 
\xy
(4,0)*+{\bullet}; % 
(4,-4)*+{\vdots}; % 
(4,-9)*+{\bullet}; % 
(0,0)*+{\sigma_{1,1}};
(0,-4)*+{};
(0,-8)*+{\sigma_{1,\ell_1}};
(10,-4)*+{\bullet}; % b
(13,-4)*+{\delta_1};
(4,0); (10,-4) **\dir{-};
(4,-9); (10,-4) **\dir{-};
(7,-13)*+{\vdots}; % 
(4,-17)*+{\bullet}; % 
(4,-21)*+{\vdots}; % 
(4,-26)*+{\bullet}; % 
(0,-17)*+{\sigma_{k,1}};
(0,-21)*+{};
(0,-26)*+{\sigma_{k,\ell_k}};
(10,-21)*+{\bullet}; % b
(13,-21)*+{\delta_k};
(4,-17); (10,-21) **\dir{-};
(4,-26); (10,-21) **\dir{-};
(7,-30)*+{\vdots}; % 
(4,-34)*+{\bullet}; % 
(4,-38)*+{\vdots}; % 
(4,-43)*+{\bullet}; % 
(0,-34)*+{\sigma_{r,1}};
(0,-38)*+{};
(0,-43)*+{\sigma_{r,\ell_r}};
(10,-38)*+{\bullet}; % b
(13,-38)*+{\delta_r};
(4,-34); (10,-38) **\dir{-};
(4,-43); (10,-38) **\dir{-};
(4,-48)*+{\bullet}; % 
(0,-48)*+{\sigma};
(10,-42)*+{\bullet}; % 
(10,-47)*+{\bullet}; % 
(10,-49.5)*+{\vdots}; % 
(10,-54)*+{\bullet}; % 
(13,-42)*+{\hat\delta_1}; % 
(13,-47)*+{\hat\delta_2}; % 
(13,-54)*+{\hat\delta_t}; % 
(4,-48); (10,-42) **\dir{-};
(4,-48); (10,-47) **\dir{-};
(4,-48); (10,-54) **\dir{-};
\endxy
\]
\end{center}
\caption{Example of supply nodes adjacent to demand nodes $\delta_k$ in $B(y)$ for $k=1,\dots,r$ and demand nodes adjacent to the supply node $\sigma$}\label{figure:hurkens-lemma-2}
\end{figure}
Here we describe the successive pivots applied starting from the vertex $y$ of $P$. For each $k=1,\dots,r$, we do the following:
\begin{enumerate}
\item If $(\sigma,\delta_1)$ is not in the support graph, pivot to add $(\sigma,\delta_1)$. Then, pivot to add edges of the form $(\sigma_{1,n},\hat\delta_q)$ for $n=1,2,\dots$ until all edges of the form $(\sigma_{1,n},\delta_1)$ are removed.

\item If $(\sigma,\delta_2)$ is not in the support graph, pivot to add $(\sigma,\delta_2)$. Then, pivot to add edges of the form $(\sigma_{2,n},\hat\delta_q)$ for $n=1,2,\dots$  until all edges of the form $(\sigma_{2,n},\delta_2)$ are removed.

\item Continue in this way for $k=3,\dots,r$: If $(\sigma,\delta_k)$ is not in the support graph, pivot to add it. Then, pivot to add edges of the form $(\sigma_{k,n},\hat\delta_q)$ for $n=1,2,\dots$ until all edges of the form $(\sigma_{k,n},\delta_k)$ are removed.

\end{enumerate}
In the resulting vertex $y'$ of $P$, the support graph $B(y')$ has $\delta_1,\dots,\delta_r$ as leaf nodes adjacent to $\sigma$, which matches the support graph $B(x)$ of the vertex $x$ of $P$. What remains to show (and we skip it) is that there is a choice of nodes $\sigma,\delta_1,\dots,\delta_r$ where the number of pivots performed is at most $4r$. Instead, we illustrate the idea behind the sequence of prescribed pivots in an example:

\begin{example}
Let $y$ be a vertex of $P$ where nodes $\sigma,\delta_1,\dots,\delta_r$ in $B(y)$ are already identified. Figure~\ref{figure:hurkens-lemma-2-example} shows the support graph $B(y)$. (The vertex $x$ and its associated support graph $B(x)$ can be arbitrary, thus we do not depict it in Figure~\ref{figure:hurkens-lemma-2-example}.)

Since $(\sigma,\delta_1)$ is not in the support graph $B(y)$ of the vertex $y$ of $P$, we insert it, and the pivot operation removes the edge $(\sigma_{1,3},\delta_1)$. We now apply pivots to the resulting adjacent vertex of $P$ as follows: After the pivot, only the edges $(\sigma_{1,1},\delta_1)$ and $(\sigma_{1,2},\delta_1)$ are incident to the demand node $\delta_1$. These two edges are removed by pivoting to add the edges $(\sigma_{1,1},\hat\delta_1)$ and $(\sigma_{1,2},\hat\delta_1)$, respectively, which causes $\delta_1$ to be a leaf node adjacent to $\sigma$.

After insertion of the edge $(\sigma,\delta_2)$ the remaining edge of the form  $(\sigma_{2,n},\delta_2)$ is removed the same way. Since $\delta_3$ is already a leaf node, the insertion of $(\sigma,\delta_3)$ will cause it to be a leaf node adjacent to $\sigma$.
\begin{figure}[hbt]
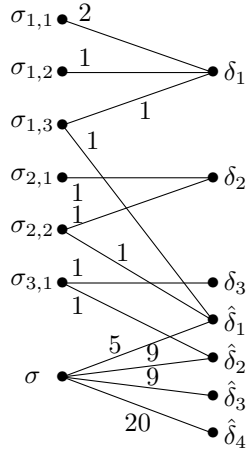

\begin{center}
\[
\xy
(0,0)*{}="s11"; 
(0,-7)*{}="s12"; 
(0,-14)*{}="s13"; 
"s11"*+{\bullet}; 
"s11"+(-4,0)*+{\sigma_{1,1}};
"s12"*+{\bullet}; 
"s12"+(-4,0)*+{\sigma_{1,2}};
"s13"*+{\bullet}; 
"s13"+(-4,0)*+{\sigma_{1,3}};
(20,-7)*{}="d1"; 
"d1"*+{\bullet}; 
"d1"+(3,0)*+{\delta_1};
"s11"; "d1" **\dir{-};
"s12"; "d1" **\dir{-};
"s13"; "d1" **\dir{-};
"s11"+(3,1)*+{2}; %flow
"s12"+(3,2)*+{1}; %flow
"s13"+(11,2)*+{1}; %flow
"s13"+(4,-2)*+{1}; %flow
%%%%%%%%%%%%%%%%
(0,-21)*{}="s21"; 
(0,-28)*{}="s22"; 
"s21"*+{\bullet}; 
"s21"+(-4,0)*+{\sigma_{2,1}};
"s22"*+{\bullet}; 
"s22"+(-4,0)*+{\sigma_{2,2}};
(20,-21)*{}="d2"; 
"d2"*+{\bullet}; 
"d2"+(3,0)*+{\delta_2};
"s21"; "d2" **\dir{-};
"s22"; "d2" **\dir{-};
"s21"+(2,-2)*+{1}; %flow
"s22"+(2,2.1)*+{1}; %flow
"s22"+(8,-3)*+{1}; %flow
%%%%%%%%%%%%%%%%%
(0,-35)*{}="s31"; 
"s31"*+{\bullet}; 
"s31"+(-4,0)*+{\sigma_{3,1}};
(20,-35)*{}="d3"; 
"d3"*+{\bullet}; 
"d3"+(3,0)*+{\delta_3};
"s31"; "d3" **\dir{-};
"s31"+(2,2.1)*+{1}; %flow
"s31"+(2,-3)*+{1}; %flow
%%%%%%%%
%%%%%%%%
(20,-40)*{}="dh1"; 
(20,-45)*{}="dh2"; 
(20,-50)*{}="dh3"; 
(20,-55)*{}="dh4"; 
"dh1"*+{\bullet};
"dh2"*+{\bullet};
"dh3"*+{\bullet};
"dh4"*+{\bullet};
"dh1"+(3,0)*+{\hat\delta_1};
"dh2"+(3,0)*+{\hat\delta_2};
"dh3"+(3,0)*+{\hat\delta_3};
"dh4"+(3,0)*+{\hat\delta_4};
(0,-47.5)*{}="s"; 
"s"*+{\bullet}; 
"s"+(-4,0)*+{\sigma};
"s"; "dh1" **\dir{-};
"s"; "dh2" **\dir{-};
"s"; "dh3" **\dir{-};
"s"; "dh4" **\dir{-};
"s13"; "dh1" **\dir{-};
"s22"; "dh1" **\dir{-};
"s31"; "dh2" **\dir{-};
"s"+(7,4.3)*+{5}; %flow
"s"+(12,3.2)*+{9}; %flow
"s"+(12,0)*+{9}; %flow
"s"+(10,-6)*+{20}; %flow
\endxy
\]
\end{center}
\caption{The support graph $B(y)$  of the vertex $y$ of a transportation polytope $P$}\label{figure:hurkens-lemma-2-example}
\end{figure}
\end{example}

To prove that the Hirsch Conjecture is true for transportation polytopes, one would hope that any pair of vertices that differ in $k$ support elements has a pivot step that reduces the number of non-zero variables in which the vertices differ, but Brightwell et al.{}~\cite{Brightwell:LinearTransportation} noticed that this was not true. We show their counter-example in Figure~\ref{figure:reducingpivot}.
\begin{figure}[hbt]
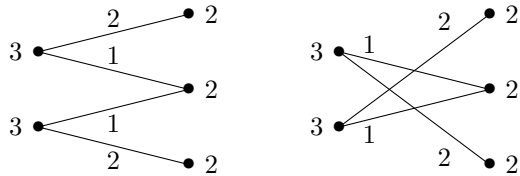

\begin{center}
\[
\xy
(0,-5)*+{\bullet}; 
(0,-15)*+{\bullet}; 
(-3,-5)*+{3}; 
(-3,-15)*+{3}; 
(20,0)*+{\bullet};
(20,-10)*+{\bullet}; 
(20,-20)*+{\bullet}; 
(23,0)*+{2}; 
(23,-10)*+{2}; 
(23,-20)*+{2}; 
(0,-5); (20,0) **\dir{-}; %
(0,-5); (20,-10) **\dir{-}; %
(0,-15); (20,-10) **\dir{-}; %
(0,-15); (20,-20) **\dir{-}; %
% flows
(10,-.5)*+{2}; 
(10,-5.5)*+{1}; 
(10,-14.5)*+{1}; 
(10,-19.5)*+{2}; 
%%%%%%%%%%
(40,-5)*+{\bullet}; 
(40,-15)*+{\bullet}; 
(37,-5)*+{3}; 
(37,-15)*+{3}; 
(60,0)*+{\bullet};
(60,-10)*+{\bullet}; 
(60,-20)*+{\bullet}; 
(63,0)*+{2}; 
(63,-10)*+{2}; 
(63,-20)*+{2}; 
(40,-5); (60,-20) **\dir{-}; %
(40,-5); (60,-10) **\dir{-}; %
(40,-15); (60,-10) **\dir{-}; %
(40,-15); (60,0) **\dir{-}; %
% flows
(54,-1)*+{2}; 
(54,-19)*+{2}; 
(44,-4)*+{1}; 
(44,-16)*+{1}; 
\endxy
\]
\end{center}
\caption{Support graphs of a pair of vertices where no pivot reduces the difference in support}\label{figure:reducingpivot}
\end{figure}

\begin{openproblem}
Prove or disprove the Hirsch Conjecture for $2$-way transportation polytopes.
\end{openproblem}

By Corollary~\ref{corollary:classicalnumberfacets}, this would mean the diameter is less than or equal to $p+q-1$. 
This conjecture holds for many special cases that restrict the margins. For example the conjecture is true
for Birkhoff's polytope and for some special right-hand sides (see e.g., \cite{borgwardt}).

While transportation polytopes seem tame compared to other polytopes. It has been shown that they have some non-trivial topological structure: Diameter bounds for simple $d$-polyhedra can be studied via decomposition properties of related simplicial complexes. Each non-degenerate
simple polytope has a polar \emph{simplicial complex}, a simplicial polytope.  Billera and Provan (see~\cite{provanbillera}) showed that polytopes whose dual simplicial polytope is \emph{weakly vertex decomposable} have a linear diameter. But it has recently been shown (see~\cite{DeLoeraKlee:NotAllSimplicial}) that the infinite family of polars of $p \times 2$ transportation polytopes for $p \geq 5$ are not weakly vertex-decomposable, the first ever such examples. But at the same time, one can prove the Hirsch Conjecture holds for $p \times 2$ transportation polytopes by  proving a stronger statement:

\begin{theorem}
The Hirsch Conjecture holds for all convex polytopes obtained as the intersection of a cube and a hyperplane.
\end{theorem}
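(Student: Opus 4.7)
Write $P = [0,1]^n \cap H$ with $H = \{x \in \R^n : a^T x = b\}$. If some $a_i = 0$ then $x_i$ is unconstrained by $H$ and $P$ factors into a product that reduces to a lower ambient dimension, so assume $a_i \neq 0$ for all $i$; the involution $x_i \mapsto 1 - x_i$ is a combinatorial isomorphism preserving the cube and the slicing hyperplane (after adjusting $a$ and $b$), so we may also assume $a_i > 0$. A perturbation of $b$ analogous to Lemma 4.6 of Chapter 6 of~\cite{YKK} produces a simple polytope whose graph has at least as large a diameter as $P$, so it suffices to treat the generic case. Then $\dim P = n - 1$, each vertex $v$ has exactly one fractional coordinate $\varphi(v) \in [n]$ with $v_k \in \{0,1\}$ for the other $n-1$ indices, and the tight facet set $T(v) = \{\{x_k = v_k\} \cap P : k \neq \varphi(v)\}$ has size $n - 1$. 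Let $f$ be the number of facets of $P$. The plan is induction on $n$; the base cases $n \leq 2$ are line segments and are trivial.

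\textbf{Case A: $u$ and $v$ share a facet $F = \{x_i = c\} \cap P$.} Then $F$ is itself a cube-hyperplane intersection, sitting in the $(n-1)$-cube $\{x_i = c\} \cap [0,1]^n$ sliced by $H$, so $\dim F = n - 2$. If $f'$ denotes its facet count, then $f' \leq f - 1$, since $F$ is not a facet of itself and the opposite cube-facet $\{x_i = 1 - c\} \cap P$, when a facet of $P$, has empty intersection with $F$. By induction $\dist_F(u, v) \leq f' - (n - 2) \leq f - n + 1$, and $\dist_P(u, v) \leq \dist_F(u, v)$ because every edge of $F$ is an edge of $P$.

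\textbf{Case B: $T(u) \cap T(v) = \emptyset$.} The aim is to exhibit a single pivot from $u$ to a vertex $u'$ that shares a facet with $v$, and then apply Case A to the pair $(u', v)$. The key observation is that if the newly tight facet produced by the pivot has ``flip'' type, namely $\{x_k = 1 - u_k\}$ for some $k \neq \varphi(u)$, then the opposite facet $\{x_k = u_k\}$ already lies in $T(u)$ and is therefore a facet of $P$. Applying Case A to the flip facet therefore gives $f'' \leq f - 2$, hence $\dist_P(u', v) \leq (f-2) - (n-2) = f - n$, and finally $\dist_P(u, v) \leq 1 + (f - n) = f - n + 1$, matching the Hirsch bound exactly.

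\textbf{Main obstacle.} The central difficulty is producing a flip pivot whose terminal facet lies in $T(v)$. Because $T(u) \cap T(v) = \emptyset$, we have $v_k = 1 - u_k$ for every $k \neq \varphi(u), \varphi(v)$, so a flip at any such $k$ automatically lands on $\{x_k = v_k\} \in T(v)$. A direct computation of pivot ratios shows that relaxing $\{x_k = u_k\}$ produces a flip precisely when $a_k / a_{\varphi(u)} \leq u_{\varphi(u)}$ (if $u_k = 0$) or $a_k / a_{\varphi(u)} \leq 1 - u_{\varphi(u)}$ (if $u_k = 1$); otherwise $x_{\varphi(u)}$ reaches a cube boundary first and we get a ``swap.'' The task is to show that among the $n - 2$ candidate indices, at least one satisfies the relevant flip inequality, and here I would use the linear identity $a^T(v - u) = 0$ together with the cube bounds on both $u$ and $v$ to rule out the configuration where every pivot ratio exceeds both $u_{\varphi(u)}$ and $1 - u_{\varphi(u)}$. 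The subcase $\varphi(u) = \varphi(v)$ is the hardest, because a swap pivot there terminates at $\{x_{\varphi(u)} = c\}$ while $v_{\varphi(v)}$ is fractional, so the swap can never land in $T(v)$ and producing a flip is essential; this is precisely where the special geometry of cube-hyperplane intersections, as opposed to that of a general polytope, must be exploited.
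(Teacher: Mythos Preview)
Your framework is reasonable, but the proof has a genuine gap: the ``Main obstacle'' section is a plan (``I would use\ldots'') rather than an argument, and in fact the flip pivot you need does not always exist. Take $n=3$, $a=(4,3,2)$, $b=4.1$, $u=(0.525,0,1)$, $v=(0,1,0.55)$. Then $\varphi(u)=1\neq 3=\varphi(v)$, so the sole flip candidate is $k=2$, and the flip inequality $a_2/a_1\leq u_{\varphi(u)}$ reads $3/4\leq 0.525$, which is false. The only pivot from $u$ that lands in $T(v)$ is the \emph{swap} at $k=2$, which terminates on $\{x_1=0\}$. But for a swap your justification of $f''\leq f-2$ collapses: the facet you land on is $\{x_{\varphi(u)}=v_{\varphi(u)}\}$, and since $\varphi(u)$ is the fractional coordinate of $u$, neither $\{x_{\varphi(u)}=0\}$ nor $\{x_{\varphi(u)}=1\}$ lies in $T(u)$, so you cannot conclude that the opposite facet is present. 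Thus in Case~B with $\varphi(u)\neq\varphi(v)$ you are left with neither a flip nor a usable facet bound for the swap.

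The paper sidesteps this by abandoning induction in favor of a global Hamming-distance argument on side signatures. It proves two lemmas: (i) $f(P)\geq (d-1)+\hamm(v,w)$, by observing that in every coordinate where the signatures disagree \emph{both} opposite cube-facets are facets of $P$ (one is witnessed by $v$, the other by $w$); and (ii) from any vertex there is a single pivot, of flip \emph{or} swap type, reducing the Hamming distance by one. Iterating (ii) gives $\dist_P(v,w)\leq\hamm(v,w)\leq f-d$. The crucial difference is that the swap landing on $\{x_{\varphi(u)}=v_{\varphi(u)}\}$ is treated on equal footing with a flip, and the facet lower bound is established once globally from the pair $(v,w)$ rather than tracked through a recursion, so the possibly missing opposite facet in the $\varphi(u)$-direction never has to be accounted for.
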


Fix a dimension $d \in \N$.  Let $H = \{x \in \R^d \mid a_1x_1 + \cdots + a_dx_d = b\}$ be the hyperplane determined by the non-zero normal vector $a = (a_1, \ldots, a_d)$ and t
he constant $b \in \R$.  Let $\Box_d$ denote $d$-dimensional cube with $0$-$1$ vertices. Then, let $P$ denote the polytope obtained as their intersection $P = \Box_d \cap H$.

If the dimension of the polytope $P$ is less than $d-1$, then $P$ is a face of $\Box_d$. In that case, $P$ itself is a cube of lower dimension, so we assume that the polytope $P
$ is of dimension $d-1$. We may also assume that the polytope $P$ is not a facet of the $d$-cube, so that $H$ intersects the relative interior of $\Box_d$.

Without assuming any genericity, a simple dimension argument shows that the vertices of the polytope $P$ are either on the relative interior of an edge of the cube $\Box_d$ or are vertices of the cube. We assume that $H$ is sufficiently generic.  Then, no vertex of the cube $\Box_d$ will be a vertex of $P$. For each vertex $v$ of $P$, we define its \emph{side signature} $\sigma(v)$ to be a string of length $d$ consisting of the characters $*$, $0$, and $1$ by the following rule:
\begin{equation}
\sigma(v)_i = \left\{
\begin{array}{ll}
0 & \hbox{if } v_i = 0,\\
1 & \hbox{if } v_i = 1,\\
* & \hbox{if } 0 < v_i < 1.\\
\end{array}
\right.
\end{equation}
By genericity, it cannot be the case that there are two vertices of $P$ with the same side signature.  Indeed, if there were two distinct vertices $v$ and $w$ with the same side
 signature, then $P$ will contain the entire edge of the cube containing them both, and $v$ and $w$ will not be vertices.

Let $H_{i,0}$ denote the hyperplane $\{x \in \R^d \mid x_i = 0\}$ and let $H_{i,1}$ denote the hyperplane $\{x \in \R^d \mid x_i = 1\}$. If there is an $i \in [d]$ such that the
 hyperplane $H$ does not intersect $H_{i,0}$ nor $H_{i,1}$, then we can project $P$ to a lower-dimensional face of $I_d$. Thus, for each $i \in [d]$, we can assume that $H$ inte
rsects at least one of $H_{i,0}$ or $H_{i,1}$.

Given two vertices $v=(v_1,\ldots,v_d)$ and $w=(w_1,\ldots,w_d)$ of $P = I_d \cap H$, we define the \emph{Hamming distance} between them based on their side signatures: 
\begin{equation}
\hamm(v,w) = \sum_{i=1}^d \hamm( \sigma(v)_i, \sigma(w)_i ),
\end{equation}
where
\begin{equation*}
\hamm(0,1) = \hamm(1,0) = \hamm(1,*) = \hamm(*,1) = \hamm(0,*) = \hamm(*,0) = 1
\end{equation*}
and
\begin{equation*}
\hamm(0,0)=\hamm(1,1)=\hamm(*,*) = 0.
\end{equation*}

\begin{lemma}
Let $P$ defined as above using a sufficiently-generic hyperplane $H$.  Let $v$ and $w$ be two vertices of $P$.  Let $f(P)$ denote the number of facets of $P$.

If $v$ and $w$ have the $*$ in the same coordinate and $P$ does not intersect either of the two facets in that direction, i.e., there is an $i$ such that $\sigma(v)_i=*=\sigma(w)_i$ and $P \cap H_{i,1} = \emptyset = P \cap H_{i,0}$, then $f(P) \geq (d-1) + \hamm(v,w)-1$.

Otherwise, $f(P) \geq (d-1) + \hamm(v,w)$.
\end{lemma}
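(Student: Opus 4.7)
The plan is to count the facets of $P$ by associating each vertex with the cube facets containing it, then apply inclusion--exclusion to the facet-sets $\mathcal{F}(v)$ and $\mathcal{F}(w)$, and finally hunt for additional facets lying outside $\mathcal{F}(v)\cup\mathcal{F}(w)$. Under the sufficient genericity hypothesis, every facet of $P$ has the form $F_{i,\epsilon}=P\cap H_{i,\epsilon}$ for some $i\in[d]$ and $\epsilon\in\{0,1\}$, and the projection reduction performed just before the lemma guarantees that for each $i$ at least one of $F_{i,0}, F_{i,1}$ is an actual facet of $P$.

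Because each vertex $u$ of $P$ sits in the relative interior of a cube edge, its signature $\sigma(u)$ carries exactly one $*$, at some position $k_u$, and $u$ lies on precisely the $d-1$ facets $\mathcal{F}(u)=\{F_{i,\sigma(u)_i}:i\neq k_u\}$. A facet $F_{i,\epsilon}$ lies in both $\mathcal{F}(v)$ and $\mathcal{F}(w)$ exactly when $i\neq k_v,k_w$ and $\sigma(v)_i=\sigma(w)_i=\epsilon$, so $|\mathcal{F}(v)\cap\mathcal{F}(w)|$ is computed directly from $\hamm(v,w)$ after splitting on whether $k_v=k_w$ or not. In the case $k_v=k_w=k$, position $k$ contributes $0$ to $\hamm(v,w)$ and inclusion--exclusion yields $|\mathcal{F}(v)\cup\mathcal{F}(w)|=(d-1)+\hamm(v,w)$; if $P$ meets at least one of $H_{k,0},H_{k,1}$ a further facet in direction $k$ is picked up outside the union, and otherwise the base count already matches (in fact exceeds) the claimed special-case bound. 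In the case $k_v\neq k_w$, each of the positions $k_v,k_w$ contributes $1$ to $\hamm(v,w)$, and inclusion--exclusion gives only $|\mathcal{F}(v)\cup\mathcal{F}(w)|=(d-1)+\hamm(v,w)-1$, so I must locate one extra facet outside this union to reach the ``otherwise'' bound $(d-1)+\hamm(v,w)$.

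The main obstacle is this last step: exhibiting such an extra facet among the candidates $F_{k_v,1-\sigma(w)_{k_v}}$, $F_{k_w,1-\sigma(v)_{k_w}}$, or $F_{i,1-\sigma(v)_i}$ for an ``agreeing'' position $i\neq k_v,k_w$. My plan is to exploit the geometry of the cube edges through $v$ and $w$ in combination with the projection reduction: since $H$ cuts each such edge transversally at an interior point, it must reach at least one of the parallel cube facets attached to its endpoints, and generic position should force the resulting $(d-2)$-dimensional slice to be non-empty and to land in a facet slot truly disjoint from $\mathcal{F}(v)\cup\mathcal{F}(w)$. Carefully ruling out the degenerate configuration where every candidate collapses to something already counted is the technically delicate part of the argument, and it is exactly where the hypothesis defining the non-special case must be invoked in earnest.
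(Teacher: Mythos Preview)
Your setup via inclusion--exclusion on $\mathcal{F}(v)$ and $\mathcal{F}(w)$ is the paper's argument in different clothing: the paper first rotates the cube so that $\sigma(v)=(*,0,\dots,0)$ and then counts ``$0$-facets'' and ``$1$-facets'' directly, which amounts to writing out $|\mathcal F(v)\cup\mathcal F(w)|$ by hand. Your computation in the case $k_v=k_w$ is correct and matches the paper's ``second case.''

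The gap you flag in the case $k_v\neq k_w$ is genuine and cannot be closed: the extra facet you hope to find need not exist. Take $d=3$ and the generic hyperplane $H=\{x_1+x_2+x_3=\varepsilon\}$ with $0<\varepsilon<1$; then $P$ is the triangle with vertices $(\varepsilon,0,0)$, $(0,\varepsilon,0)$, $(0,0,\varepsilon)$, so $f(P)=3$. For $v=(\varepsilon,0,0)$ and $w=(0,\varepsilon,0)$ one has $k_v=1\neq 2=k_w$ and $\hamm(v,w)=2$, so the ``otherwise'' bound would demand $f(P)\ge (d-1)+\hamm(v,w)=4$. Every one of your candidate extra facets $F_{k_v,1-\sigma(w)_{k_v}}$, $F_{k_w,1-\sigma(v)_{k_w}}$, $F_{i,1-\sigma(v)_i}$ is empty here. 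The paper's own argument shares the defect, hidden behind an arithmetic slip: in its first case it records $\hamm(v,w)=k+1$, whereas the signatures $(*,0,\dots,0)$ and $(0,*,0,\dots,0,1,\dots,1)$ with $k$ trailing ones actually differ in $k+2$ positions, so the displayed facet count $d+k$ equals only $(d-1)+\hamm(v,w)-1$. In short, you have correctly isolated the difficulty, but no amount of ``exploiting the geometry of the cube edges'' will manufacture the missing facet, because the stated bound already fails in the simplex-corner configuration.
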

\begin{proof}
By rotating the (combinatorial) cube if necessary, we can assume without loss of generality that the side signature $\sigma(v)$ of the vertex $v$ is $(*,0,0,\ldots,0)$ and that the side signature $\sigma(w)$ of the vertex $w$ is either of the form $(0,*,0,0,\ldots,0,1,1,\ldots,1)$ with $k \geq 0$ trailing ones or of the form $(*,0,0,\ldots,0,1,1,\ldots,1)$ with $k \geq 1$ trailing ones, after applying a suitable rotation to the cube.

In the first case, $\hamm(v,w)=k+1$ and we have at least $d$ ``$0$-facets'' and $k$ ``$1$-facets.''

In the second case, we have $d-1$ ``$0$-facets'', $k$ ``$1$-facets'' and (unless there is an $i$ such that $\sigma(v)_i=*=\sigma(w)_i$ and $P \cap H_{i,1} = \emptyset = P \cap H_{i,0}$) at least one more facet.  Thus, $f(P) \geq (d-1) + k + 1 = (d-1) + \hamm(v,w)$, unless we are in the special case, in which case $f(P) \geq (d-1) + k + 1 - 1$.
\end{proof}

\begin{lemma}
Let $P$ defined as above using a sufficiently-generic hyperplane $H$.  Let $v$ and $w$ be two vertices of $P$.  Then, there is a pivot from the vertex $v$ to a vertex $v'$ with $\hamm(v',w) = \hamm(v,w)-1$.
\end{lemma}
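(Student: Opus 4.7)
The plan is to classify the pivots at $v$ by the $2$-face they traverse and then produce one whose edge of $P$ decreases the Hamming distance to $w$ by \emph{exactly} one, not by more. By genericity each vertex of $P$ lies in the relative interior of a unique edge of the cube $\Box_d$, so $\sigma(v)$ contains exactly one $*$-entry; denote its position by $i = i(v)$, and set $i' = i(w)$ analogously. For each $j \in [d] \setminus \{i\}$, the $2$-face $F_j$ of $\Box_d$ spanned by the coordinates $i, j$ (with the remaining coordinates fixed at the values of $v$) contains the cube edge through $v$, and $H \cap F_j$ is a line segment whose second endpoint is the unique pivot-neighbor $v'(j)$ of $v$ in $P$. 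A sign-check of $f(x) = a \cdot x - b$ at the four corners of $F_j$, using $f(v) = 0$, places $v'(j)$ into exactly one of three types: (a) $\sigma(v')_i = *$ and $\sigma(v')_j = 1 - \sigma(v)_j$; (b) $\sigma(v')_i = 0$, $\sigma(v')_j = *$; (c) $\sigma(v')_i = 1$, $\sigma(v')_j = *$. All other coordinates remain unchanged.

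Unfolding the definition of $\hamm$, I would compute the change $\Delta_j := \hamm(v'(j), w) - \hamm(v, w)$ as a function only of the type of the pivot and of the entries $\sigma(v)_j$, $\sigma(w)_i$, and $\sigma(w)_j$. In type (a) the change lies in $\{-1, 0, +1\}$ and equals $-1$ exactly when $\sigma(v)_j \ne \sigma(w)_j$ and $\sigma(w)_j \ne *$. In types (b) and (c) the change is a sum of a contribution at coordinate $i$ and a contribution at coordinate $j$, each in $\{-1, 0, +1\}$; the total ranges over $\{-2, \ldots, +2\}$, and the configurations producing $\Delta_j = -1$ are precisely those in which one of the two contributions is $-1$ and the other is $0$. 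A short table then lists the combinatorial patterns of $(\sigma(w)_i, \sigma(w)_j)$ achieving this.

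The main step is to select a $j$ realizing $\Delta_j = -1$, split by whether $i = i'$. If $i = i'$, some $j \ne i$ has $\sigma(v)_j \ne \sigma(w)_j$, and a type-(a) pivot at any such $j$ gives $\Delta_j = -1$ immediately; the key subclaim is that, via the sign pattern of $f$ on $F_j$, not every disagreement coordinate can force a type-(b)/(c) exit. If $i \ne i'$, I would take $j = i'$, use that the pivot must then be of type (b) or (c), and choose the side on which $H$ exits so that the $i$-contribution and the $j$-contribution sum to $-1$ rather than $-2$; when $H$ forces the undesired side, I would instead pick $j$ among the $\{0,1\}$-coordinates where $\sigma(v)$ and $\sigma(w)$ disagree, producing a type-(a) drop of exactly one. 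The crucial and hardest step is to rule out the exceptional sign configurations in which every available pivot gives $\Delta_j \in \{-2, 0, +1, +2\}$ but never $-1$; this requires combining $f(v) = f(w) = 0$ with the generic-sign hypothesis on the corners of $\Box_d$ to force at least one of the above selection rules to succeed. This last step is the main obstacle of the proof.
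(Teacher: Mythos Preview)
Your framework---classifying pivots by the $2$-face $F_j$ into types (a), (b), (c), computing $\Delta_j$ from the two affected coordinates, and splitting on $i=i'$ versus $i\neq i'$---is the same skeleton the paper uses. But the proposal stops exactly where the real content begins: you explicitly flag the exclusion of the all-bad configuration (every pivot giving $\Delta_j\in\{-2,0,+1,+2\}$) as ``the main obstacle'' and do not resolve it. That is a genuine gap, not a detail.

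Two concrete issues. First, the sentence ``choose the side on which $H$ exits so that the $i$-contribution and the $j$-contribution sum to $-1$ rather than $-2$'' is a misconception: the hyperplane $H$ is fixed, so the exit side of $H\cap F_j$ (and hence the pivot type) is determined, not chosen. You cannot steer a type-(c) pivot into a type-(b) one. Second, you are missing the reduction that makes the endgame tractable. If $\sigma(v)_k=\sigma(w)_k\in\{0,1\}$ for some $k$, then $v,w$ both lie in the face $P\cap H_{k,\sigma(v)_k}$, which is again a cube-slice of one lower dimension; an edge there is an edge of $P$, so one may induct on $d$ and assume the $\{0,1\}$-coordinates of $\sigma(v)$ and $\sigma(w)$ disagree everywhere. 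After this reduction (and a cube symmetry putting $\sigma(v)=(*,0,\dots,0)$), the paper's geometric punchline is available: if every pivot at $v$ were of the ``bad'' kind, the neighbours of $v$ would all sit on cube-edges incident to a single cube-corner, forcing $P$ to be a vertex figure of $\Box_d$; but then $w$, whose signature is $(*,1,\dots,1)$ or $(1,\dots,1,*)$, cannot be a vertex of that simplex---contradiction. Your sign-pattern remarks gesture toward this but never use the existence of $w$ to close the argument, and without the extremal reduction the case analysis does not collapse cleanly.
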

\begin{proof}
Again by rotating if necessary, without loss of generality, we can assume that $\sigma(v) = (*,0,0,\ldots,0)$ and that $\sigma(w)$ is either $(*,1,1,\ldots,1)$ or $(1,1,\ldots,1,*)$.

If the side signature $\sigma(w)$ of $w$ is $(*,1,1,\ldots,1)$, performing a pivot on the vertex $v$ in any one of the $d-1$ last coordinates reduces the Hamming distance.

Otherwise, the side signature $\sigma(w)$ of $w$ is $(1,1,\ldots,1,*)$. We now describe what can occur when pivoting from the vertex $v$ to a new vertex $v'$. We claim that at least one of the $d-1$ possible pivots on the vertex $v$ does not put a $0$ in the first coordinate of the side signature $\sigma(v')$ of the new vertex $v'$. Otherwise, the hyperplane $H$ cuts the polytope $P$ as a vertex figure: that is to say, the polytope $P$ cuts the corner $(1,0,\ldots,0)$ of the cube. See Figure~\ref{figure:cubehyperplanevertexfigure} for a picture.
\begin{figure}[hbt]
  \begin{center}
    \includegraphics[scale=0.9]{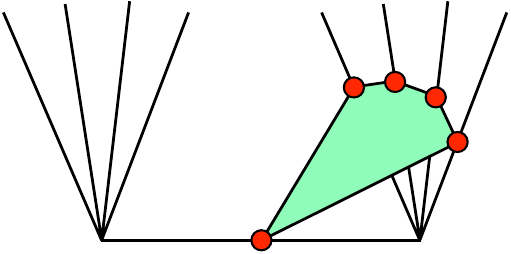}
    \caption{The vertex $v$ on the horizontal axis (the $x_1$ coordinate increases moving to the right) and its neighboring vertices on orthogonal edges of the cube.} \label{figure:cubehyperplanevertexfigure}
  \end{center}
\end{figure}

The remaining kind of pivots on $v$ that result in a new vertex $v'$ give side signatures $\sigma(v')$ of one of the following three forms:
\begin{enumerate}
\item The signature $\sigma(v')$ of the neighbor $v'$ of the vertex $v$ could be \[(1,0,0,\ldots,*,0,\ldots,0),\] which reduces the Hamming distance by one.
\item The signature $\sigma(v')$ of the neighbor $v'$ of the vertex $v$ could be \[(*,0,0,\ldots,0,1,0,0,\ldots,0),\] which reduces the Hamming distance by one.
\item Otherwise, \emph{one} remaining pivot could give the side signature \[(1,0,0,\ldots,0,*)\] for $\sigma(v')$.
\end{enumerate}
This third type of pivot does not reduce the Hamming distance. But if this is the only pivot that could give this and the first two kinds of pivots cannot be performed, then all of the remaining pivots are the kind that put $0$ in the first coordinate of the side signature $\sigma(v)$ of $v$. But this would imply that $H$ could not have intersected the hyperplane $H_{d+}$, and thus $P$ would be a $(d-1)$-cube with one vertex truncated.
\end{proof}

\begin{corollary}\label{theorem:thm_2n}
Let $P \not= \emptyset$ be a classical transportation polytope of size $p \times 2$ with $n \leq 2p$ facets. Then, the dimension of $P$ is $d=p-2$ and the diameter of $P$ is at  most $n - d$.
\end{corollary}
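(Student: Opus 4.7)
The plan is to recognize every $p \times 2$ classical transportation polytope as an affine image of the intersection of a cube and a hyperplane, and then invoke the preceding theorem directly. Starting from a $p \times 2$ polytope $P$ with margins $u \in \R^p_{\geq 0}$ and $v \in \R^2_{\geq 0}$, I would use the row-sum equations $x_{i,1} + x_{i,2} = u_i$ to eliminate $x_{i,2} = u_i - x_{i,1}$. The non-negativity constraints on both $x_{i,1}$ and $x_{i,2}$ then collapse to box constraints $0 \leq x_{i,1} \leq u_i$, while the column-sum equation $x_{1,2} + \cdots + x_{p,2} = v_2$ becomes redundant (it follows from the row sums and $\sum_i x_{i,1} = v_1$ because $\sum_i u_i = v_1 + v_2$). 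After dropping any coordinate with $u_i = 0$ (which forces $x_{i,1} = 0$ trivially) and rescaling $y_i = x_{i,1}/u_i$, what remains is the intersection $\Box_{p'} \cap H$, where $H$ is the hyperplane $\{y : \sum_i u_i y_i = v_1\}$ and $p'$ is the number of indices with $u_i > 0$.

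Affine isomorphism preserves the face lattice and in particular the dimension, the number of facets $n$, and the graph diameter. By Lemma~\ref{lemma:classicalrankdimension}, the dimension of $P$ is $(p-1)(q-1) = p-1$ (for $q=2$), and by Corollary~\ref{corollary:classicalnumberfacets} the number of facets lies in $\{p, p+1, \ldots, 2p\}$, which matches the hypothesis $n \leq 2p$. The preceding theorem asserts the Hirsch Conjecture for cube-hyperplane intersections, and applying it to the affine model $\Box_{p'} \cap H$ of $P$ yields $\dist(x,y) \leq n - d$ for every pair of vertices $x, y$ of $P$, which is the desired bound.

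The main technical obstacle is the ``sufficiently generic'' hypothesis in the preceding theorem, which need not hold for arbitrary integer margins $u, v$: the hyperplane $H$ may pass through a vertex of the cube, producing degenerate vertices of $P$. I would handle this with the perturbation strategy already invoked earlier in the paper: slightly perturb the margins so that the resulting polytope $P'$ is non-degenerate and $H$ is generic with respect to $\Box_{p'}$, apply the theorem to $P'$, and then note that the graph of $P$ is obtained from the graph of $P'$ by edge contractions, which cannot increase the diameter or the number of facets. Passing back from $P'$ to $P$ therefore preserves the bound $n - d$, completing the argument.
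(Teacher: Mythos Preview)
Your argument is essentially identical to the paper's: eliminate the second column via the row equations, recognize the result as a rectangular box $[0,u_1]\times\cdots\times[0,u_p]$ intersected with the hyperplane $\sum_i x_{i,1}=v_1$, rescale to a unit cube, and invoke the preceding theorem. Your computation of the dimension as $(p-1)(q-1)=p-1$ is correct (the ``$d=p-2$'' in the corollary's statement appears to be a typo in the paper), and your explicit perturbation step to handle non-generic $H$ is a detail the paper's argument leaves implicit.
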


To see this follows from the previous theorem, we note that the coordinate-erasing projection of $P$ to the coordinates $x_{1,1}, x_{2,1}, \ldots, x_{p,1}$ of the first column shows that $P$ is the intersection of a hyperplane with a rectangular prism. (In particular, if the intervals are all equal and one has a cube, then the Minkowski sum of two consecutive hypersimplices $D(p,i)$ and $D(p,i+1)$ can be realized as a transportation polytope of size $p \times 2$.) After an affine transformation, the polytope $P$ is the intersection of a hyperplane and a cube. (The transformation takes the cube $[0,u_1] \times \cdots \times [0,u_p]$ to the cube $[0,1]^p$. That is to say, the $i$th coordinate $y_i$ in the cube $[0,1]^p$ is $
x_{i,1}/u_i$.)  By applying an affine transformation to $P = \Box_d \cap H$, we obtain a $p \times 2$ classical transportation polytope.

The Hirsch bound also holds for Birkhoff polytopes:

\begin{theorem}
Let $B_p$ be the $p$th Birkhoff polytope then
\begin{enumerate}

\item\label{item:birkhoff-vertex-degree} the degree of each vertex of $B_p$ is 
\[\sum_{k=0}^{p-2} \binom{p}{k}(p-k-1)!\]

\item\label{item:birkhoff-diameter} If $p \geq 4$, the diameter of $B_p$ is $2$.

\item (Billera-Sarangarajan~\cite{Billera:CombinatoricsPermutationPolytopes}) Every pair of vertices $x,y$ is contained in
a cubical face. The dimension of this cubical face is the number of cycles in the union of $B(x)$ and $B(y)$.
\end{enumerate}
\end{theorem}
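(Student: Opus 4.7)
The plan is to prove each part by reducing to the combinatorial edge and vertex characterizations already developed for classical transportation polytopes.

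For the degree formula in part (1), I would apply the Proposition on edges directly. Since each vertex of $B_p$ is a permutation matrix $P_\pi$, the support graph $B(P_\pi)$ is the perfect matching of $K_{p,p}$ given by $\{(i,\pi(i)) : i \in [p]\}$. Two permutation matrices $P_\pi$ and $P_\sigma$ are adjacent if and only if $B(P_\pi) \cup B(P_\sigma)$ contains a unique cycle, which occurs precisely when $\sigma\pi^{-1}$ consists of a single $\ell$-cycle (for some $\ell \geq 2$) together with $p-\ell$ fixed points. The number of $\ell$-cycles in $S_p$ is $\binom{p}{\ell}(\ell-1)!$; summing over $\ell = 2, 3, \ldots, p$ and reindexing with $k = p-\ell$ produces exactly $\sum_{k=0}^{p-2} \binom{p}{k}(p-k-1)!$.

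For the diameter bound in part (2), take non-adjacent vertices $P_\pi$ and $P_\sigma$ and set $\tau = \sigma\pi^{-1}$. By part (1), the disjoint cycle decomposition of $\tau$ (with fixed points counted as $1$-cycles) has $k \geq 2$ cycles $c_1,\ldots,c_k$. Pick a representative $x_i \in c_i$ from each and form $\rho = (x_1\,x_2\,\cdots\,x_k)$, a cycle of length $k \geq 2$. A direct check, tracing $\rho\tau$ elementwise through each $c_i$, shows that $\rho\tau$ is a single $p$-cycle; hence $\tau = \rho^{-1}(\rho\tau)$ is a product of two non-trivial cycles. Letting $\mu = \rho\tau\cdot\pi$, the differences $\mu\pi^{-1} = \rho\tau$ and $\sigma\mu^{-1} = \rho^{-1}$ are both single cycles of length at least $2$, so $P_\mu$ is adjacent to both $P_\pi$ and $P_\sigma$, giving distance at most $2$. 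For $p \geq 4$ the element $(1\,2)(3\,4) \in S_p$ is not a single cycle, so non-adjacent pairs exist, and the diameter equals $2$.

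For part (3), consider the face $F \subseteq B_p$ cut out by the inequalities $x_{i,j} = 0$ for all $(i,j) \notin B(P_\pi) \cup B(P_\sigma)$; this is genuinely a face since the selected non-negativity constraints can be made simultaneously active, and it visibly contains both $P_\pi$ and $P_\sigma$. The union $B(P_\pi) \cup B(P_\sigma)$ decomposes into shared edges (where $\pi(i) = \sigma(i)$) and $k$ non-trivial alternating cycles $C_1,\ldots,C_k$. On shared edges the row and column sums force $x_{i,j} = 1$. On a cycle $C_j$, label the alternating sequence as $u_1\,v_1\,u_2\,v_2\,\cdots\,u_\ell\,v_\ell\,u_1$ with $\pi(u_m) = v_m$ and $\sigma(u_m) = v_{m-1}$; the row constraint at each $u_m$ together with the column constraint at each $v_m$, applied cyclically, telescopes to $x_{u_m,v_m} = a_j$ and $x_{u_m,v_{m-1}} = 1 - a_j$ for a single parameter $a_j \in [0,1]$. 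Because the parameters $a_1,\ldots,a_k$ are independent across cycles, $F \cong [0,1]^k$, yielding the claimed $k$-dimensional cubical face. I expect the main technical obstacle to be this telescoping inside each alternating cycle: one must verify that the cyclic system of row and column sum equations collapses to exactly one degree of freedom per cycle, whereas the remaining work is routine bookkeeping with cycle types in $S_p$ and direct use of the edge characterization for classical transportation polytopes.
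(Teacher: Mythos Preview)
Your proposal is correct. For part~(1) you do exactly what the paper does: reduce to counting permutations whose quotient is a single cycle on a chosen subset, then reindex. For part~(2) your argument differs in presentation from the paper's. The paper first reduces by induction to the case where $B(x)$ and $B(y)$ share no edges, then indicates via a figure how to splice the resulting disjoint even cycles of $B(x)\cup B(y)$ into one long cycle to obtain the intermediate vertex. Your construction does the same thing algebraically and without the inductive reduction: choosing one representative $x_i$ per cycle of $\tau=\sigma\pi^{-1}$ (fixed points included) and multiplying by $\rho=(x_1\cdots x_k)$ is precisely the operation of linking the cycles end to end, and it handles common edges automatically since they appear as $1$-cycles of $\tau$. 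Your version is a bit cleaner and avoids the induction; the paper's version is more pictorial. For part~(3) the paper gives no proof at all, only the citation to Billera--Sarangarajan; your telescoping argument on each alternating cycle is the standard one and is correct.
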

\begin{proof}

For part~\ref{item:birkhoff-vertex-degree}, note that because the symmetric group 
acts transitively on the vertices (which are permutation matrices) the degree of all 
vertices is the same. It suffices to count how many vertices are adjacent to the vertex 
corresponding to the identity matrix. Any adjacent vertex $y$ has $k$ common edges with $x$ for $k=0,\dots,p-2$.
Now the $k$ edges can be chosen in $\binom{p}{k}$ ways and for each choice we
have a unique cycle being formed with the remaining $(p-k-1)$ pairs of vertices $(i,i')$. This can be done in $(p-k-1)!$ ways.

Now we prove part~\ref{item:birkhoff-diameter}. Given two non-adjacent vertices $x$ 
and $y$ we have a third vertex $z$ adjacent to both. Without loss of generality, the two graphs
$B(x)$ and $B(y)$ have no common edges, otherwise apply induction. Thus they define $p$ 
disjoint bipartite cycles, as shown in Figure~\ref{figure:diameter-birkhoff}.
\end{proof}
\begin{figure}[hbt]
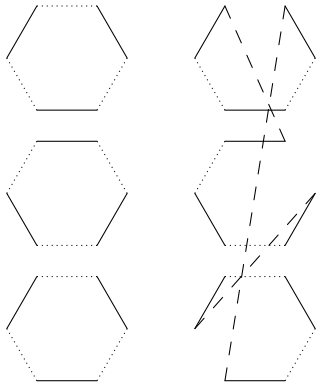

\begin{center}
\[ 
\xy 
(8,0)*{}="A"; 
(4,6.9282)*{}="B";
(-4,6.9282)*{}="C";
(-8,0)*{}="D"; 
(-4,-6.9282)*{}="E";
(4,-6.9282)*{}="F";
"A"; "B" **\dir{-};
"B"; "C" **\dir{.};
"C"; "D" **\dir{-};
"D"; "E" **\dir{.};
"E"; "F" **\dir{-};
"F"; "A" **\dir{.};
(0,-18)*{}="L2";
"A"+"L2"; "B"+"L2" **\dir{.};
"B"+"L2"; "C"+"L2" **\dir{-};
"C"+"L2"; "D"+"L2" **\dir{.};
"D"+"L2"; "E"+"L2" **\dir{-};
"E"+"L2"; "F"+"L2" **\dir{.};
"F"+"L2"; "A"+"L2" **\dir{-};
(0,-36)*{}="L3";
"A"+"L3"; "B"+"L3" **\dir{-};
"B"+"L3"; "C"+"L3" **\dir{.};
"C"+"L3"; "D"+"L3" **\dir{-};
"D"+"L3"; "E"+"L3" **\dir{.};
"E"+"L3"; "F"+"L3" **\dir{-};
"F"+"L3"; "A"+"L3" **\dir{.};
(25,0)*{}="R1";
"A"+"R1"; "B"+"R1" **\dir{-};
%"B"+"R1"; "C"+"R1" **\dir{.};
"C"+"R1"; "D"+"R1" **\dir{-};
"D"+"R1"; "E"+"R1" **\dir{.};
"E"+"R1"; "F"+"R1" **\dir{-};
"F"+"R1"; "A"+"R1" **\dir{.};
(25,-18)*{}="R2";
%"A"+"R2"; "B"+"R2" **\dir{.};
"B"+"R2"; "C"+"R2" **\dir{-};
"C"+"R2"; "D"+"R2" **\dir{.};
"D"+"R2"; "E"+"R2" **\dir{-};
"E"+"R2"; "F"+"R2" **\dir{.};
"F"+"R2"; "A"+"R2" **\dir{-};
(25,-36)*{}="R3";
"A"+"R3"; "B"+"R3" **\dir{-};
"B"+"R3"; "C"+"R3" **\dir{.};
"C"+"R3"; "D"+"R3" **\dir{-};
%"D"+"R3"; "E"+"R3" **\dir{.};
"E"+"R3"; "F"+"R3" **\dir{-};
"F"+"R3"; "A"+"R3" **\dir{.};
"C"+"R1"; "B"+"R2" **\dir{--};
"B"+"R1"; "E"+"R3" **\dir{--};
"A"+"R2"; "D"+"R3" **\dir{--};
\endxy 
\]
\end{center}
\caption{Diameter of Birkhoff polytope}\label{figure:diameter-birkhoff}
\end{figure}

It is worth noting that even if the Hirsch Conjecture for transportation polytopes is true, 
the simplex method may behave badly because there could be long decreasing pivot sequences:
\begin{theorem}[I. Pak~\cite{Pak:FourQuestionsBirkhoff}] Consider the linear functional
\[c_\alpha \cdot x=x_{1,1}+\alpha x_{1,2}+\dots+ \alpha^{p-1} x_{1,p}+\alpha^p x_{2,1}+\dots+\alpha^{p^2-1} x_{p,p}.\]
For $1/p>\alpha > 0$, there exist a decreasing sequence of vertices of the $p \times p$ Birkhoff-von Neumann polytope of length $Kp!$ for a universal constant $K$.
\end{theorem}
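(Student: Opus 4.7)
My plan is to establish the theorem by induction on $p$, proving the slightly strengthened statement that $B_p$ admits a monotone $c_\alpha$-decreasing path visiting at least $p!/2$ vertices; this gives the universal constant $K=1/2$. The argument combines a lexicographic estimate on $c_\alpha$ with a block-by-block concatenation along edges of $B_p$, which by an earlier result in the section correspond to single-cycle pivots on permutation matrices.

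First I would verify a \emph{lexicographic monotonicity} of $c_\alpha$: for $0<\alpha<1/p$, if $\pi$ and $\sigma$ are permutations of $[p]$ that agree on positions $1,\dots,i-1$ and satisfy $\pi(i)<\sigma(i)$, then $c_\alpha\cdot M_\pi>c_\alpha\cdot M_\sigma$. The contribution to this difference from row $i$ is at least $\alpha^{(i-1)p}(\alpha^{\pi(i)-1}-\alpha^{\pi(i)})=\alpha^{(i-1)p+\pi(i)-1}(1-\alpha)\ge \alpha^{ip-2}(1-\alpha)$, while the total absolute contribution from rows $k>i$ is bounded by $\sum_{k>i}\alpha^{(k-1)p}<\alpha^{ip}/(1-\alpha^p)$. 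The inequality $(1-\alpha)(1-\alpha^p)>\alpha^2$ follows from $\alpha<1/p\le 1/2$ (valid for $p\ge 2$), so the row-$i$ term dominates. Consequently $c_\alpha$ reverses the lexicographic order on the tuples $(\pi(1),\dots,\pi(p))$: any path in $B_p$ that visits permutations in strict lex-increasing order is automatically $c_\alpha$-decreasing.

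For the inductive step I partition the vertex set of $B_p$ into the $p$ blocks $V_k=\{M_\pi:\pi(1)=k\}$ for $k=1,\dots,p$. Each $V_k$ is the face of $B_p$ cut out by $x_{1,j}=0$ for $j\ne k$ and is combinatorially a copy of $B_{p-1}$, since edges of $B_p$ that fix position $1$ correspond, after relabeling $[p]\setminus\{1\}$ in the domain and $[p]\setminus\{k\}$ in the range, to single-cycle edges of $B_{p-1}$. By the lex property every value of $c_\alpha$ on $V_k$ strictly exceeds every value on $V_{k+1}$, so concatenating decreasing paths block by block preserves monotonicity. By induction each block contains a decreasing path of length at least $(p-1)!/2$. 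To bridge consecutive blocks, observe that the transposition of positions $1$ and $j$ applied to any $\pi$ with $\pi(1)=k$ and $\pi(j)=k+1$ is a $2$-cycle pivot of $B_p$ that sends $M_\pi$ to a vertex of $V_{k+1}$ and, by the lex property, strictly decreases $c_\alpha$. Chaining the $p$ block paths via $p-1$ such bridging transpositions yields a decreasing path of total length at least $p\cdot(p-1)!/2+(p-1)\ge p!/2$.

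The main obstacle is the \emph{compatibility} of block endpoints: the transition out of $V_k$ lands at a specific vertex of $V_{k+1}$ dictated by the end of the $V_k$-path, and the inductive path in $V_{k+1}$ must actually begin at that vertex. I would resolve this by strengthening the induction hypothesis to prescribe natural ``top'' and ``bottom'' endpoints for the path in each $B_{p-1}$, and by choosing the bridging index $j$ so that the swap lands precisely at the top vertex of the next block's inductive path. This is combinatorial bookkeeping on the explicit construction rather than a new conceptual difficulty, and does not affect the $\Omega(p!)$ asymptotic bound.
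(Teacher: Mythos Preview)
The paper does not give its own proof of this theorem; it is quoted as a result of Pak and immediately followed by the companion statement on expected running time. So there is no reference argument in the paper to compare against, and your proposal must be judged on its own merits.

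Your first step is correct and is the heart of the matter: for $0<\alpha<1/p$ the functional $c_\alpha$ strictly reverses the lexicographic order on the one-line words $(\pi(1),\dots,\pi(p))$, so the problem reduces to exhibiting a long lex-increasing edge-path in the graph of $B_p$. The estimate $(1-\alpha)(1-\alpha^p)>\alpha^2$ that you use is fine for $p\geq 2$.

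The inductive block argument, however, has a genuine gap that is not mere bookkeeping. You propose to bridge $V_k$ to $V_{k+1}$ by the transposition of positions $1$ and $j$ with $\pi(j)=k+1$; this swap exchanges the \emph{values} $k$ and $k+1$ and fixes all other values. Now observe that if $\phi_k:[p-1]\to[p]\setminus\{k\}$ is the order-preserving bijection, then $\mathrm{swap}_{k,k+1}\circ\phi_k=\phi_{k+1}$. Hence the transposition carries the $V_k$-copy of a permutation $\omega\in S_{p-1}$ to the $V_{k+1}$-copy of the \emph{same} $\omega$. For the bridge to land on the prescribed ``top'' vertex of the $V_{k+1}$-path you would therefore need the inductive path in $B_{p-1}$ to start and end at the same permutation, which is impossible once $(p-1)!/2>1$. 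There is no freedom in ``choosing the bridging index $j$'': it is forced by $\pi(j)=k+1$.

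A concrete obstruction already at $p=4$: the lex-maximum of $V_2$ is $(2,4,3,1)$ and the lex-minimum of $V_3$ is $(3,1,2,4)$. These are \emph{not} adjacent in $B_4$ (one checks $\sigma\pi^{-1}=(1\,4)(2\,3)$), and there is no permutation strictly lex-between them, so no lex-increasing walk of any length can connect these particular endpoints. Thus the ``natural'' choice of lex-extremal block endpoints is provably unworkable.

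To repair the argument you must do something substantive: either use $k$-dependent (non-order-preserving) embeddings of $B_{p-1}$ into the blocks $V_k$ chosen so that the bottom of one block matches, after the swap, the top of the next; or allow bridging by longer cycles and simultaneously strengthen the induction to produce a long lex-increasing path from an \emph{arbitrary} prescribed start vertex; or give up a fixed fraction per block and control the loss recursively. Any of these is a real additional argument, and until one is supplied the proof is incomplete.
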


However, Pak (see~\cite{Pak:FourQuestionsBirkhoff}) also showed the more encouraging result that the expected average running time of the simplex method on the Birkhoff polytope with cost vector $c_\alpha$ is $O(p \log p)$.

\subsection{Integer points}

% Lattice points, magic squares, and volumes

Questions on the integer, or lattice, points of transportation polytopes are very popular in combinatorics. Objects such as magic squares, magic labelling of graphs and sudoku arrangements can be presented as lattice points of transportation polytopes. See, for instance, \cite{Beck:NumMagic,DeLoera:ManyAspects,Stanley:Magic-labelings} and the references therein. How many ways are there to fill the entries of a $p \times q$ table with margins $u$ and $v$ using only non-negative \emph{integer} entries $x_{i,j}$? E.g., see Figure~\ref{figure:count-lattice-points}. 
This counting problem is a $\# P$-complete problem, even for $2 \times q$ tables (see~\cite{Dyer:Counting-SharpP}). 
\begin{figure}[hbt]
\begin{center}
\[
\xy
(3,-3)*+{68};
(9,-3)*+{119};
(15,-3)*+{26};
(21,-3)*+{7};
(3,-9)*+{20};
(9,-9)*+{84};
(15,-9)*+{17};
(21,-9)*+{94};
(3,-15)*+{15};
(9,-15)*+{54};
(15,-15)*+{14};
(21,-15)*+{10};
(3,-21)*+{5};
(9,-21)*+{29};
(15,-21)*+{14};
(21,-21)*+{16};
% Column sums
(3,-27)*+{108};
(9,-27)*+{286};
(15,-27)*+{71};
(21,-27)*+{127};
% Row sums
(28,-3)*+{220};
(28,-9)*+{215};
(28,-15)*+{93};
(28,-21)*+{64};
% Horizontal lines
(0,0); (24,0) **\dir{-}; 
(0,-6); (24,-6) **\dir{-}; 
(0,-12); (24,-12) **\dir{-}; 
(0,-18); (24,-18) **\dir{-}; 
(0,-24); (24,-24) **\dir{-}; 
% Vertical lines
(0,0); (0,-24) **\dir{-}; 
(6,0); (6,-24) **\dir{-}; 
(12,0); (12,-24) **\dir{-}; 
(18,0); (18,-24) **\dir{-}; 
(24,0); (24,-24) **\dir{-}; 
(12,-33)*+{\text{There are 1,225,914,276,768,514 such tables.}};
\endxy
\]
\end{center}
\caption{This transportation polytope has many lattice points!}\label{figure:count-lattice-points}
\end{figure}

% Lattice points

The lattice points of dilations of the Birkhoff polytope are called \emph{semi-magic squares}: that is to say, a semi-magic square is an integral lattice point in a transportation polytope where every row and column sum is the same, namely $\zeta$. The number $\zeta$ is called the \emph{magic number}.
Counting these objects is a rather natural combinatorial problem that has been studied by many researchers. 
In~\cite{De-Loera:A-generating-function}, De~Loera, Liu, and Yoshida presented a generating function for the number of semi-magic squares and formulas for the coefficients of the Ehrhart polynomial of the $p$th Birkhoff polytope $B_p$. In particular they also
deduced a combinatorial formula for the volume of Birkhoff polytopes. The volume formula is a multivariate generating function for the lattice points of the Birkhoff polytope and all its dilations. Unfortunately the number of terms, which alternate in sign, is quite large. The 
summation runs over all the possible arborescences of a complete graph in $p$ nodes ($p^{p-2}$ of them) and the $p!$ permutations, thus the formula is quite large and not efficient to evaluate.  The key elements of this formula come from understanding triangulations
of the tangent cones of the Birkhoff polytope and the algorithmic theory of lattice points developed by Barvinok (see~\cite{Barvinok:PolyTimeIntPoints,Barvinok:ShortRationalGenerating}). More recently Liu (see~\cite{liu:perturbation}) described the same kind of generating functions for perturbations of the Birkhoff
polytope into simple transportation polytopes (i.e., the margin sum conditions are not one but a small change in value). She obtained 
similar combinatorial formulas for the generalized Birkhoff $kp \times p$ polytope. She also recovered the formula for the maximum possible number of vertices of transportation polytopes of order $kp \times p$ that had been studied in the literature before.  Prior work on enumeration includes~\cite{Carlitz:EnumerationSymmetricArrays}, where Carlitz described lattice points of dilations of the Birkhoff polytope using exponential generating functions.

%volumes
Counting magic squares and lattice points in (dilations of) Birkhoff polytopes is related to computing their volumes. 
The computation of volumes and triangulations of the Birkhoff polytope is related to the problem of generating a random doubly stochastic matrix (see~\cite{Chan:VolumePolytope}).
The volume problem has been studied by many researchers (see~\cite{Ahmed:Algebraic-Combinatorics,Ahmed:Polytopes-of-Magic, Beck:NumMagic, Beck:EhrhartBirkhoff, Chan:VolumePolytope, Diaconis:Random-Matrices, Halleck:MagicSquaresDiophantine,Hemmecke:On-the-computation-of-Hilbert,Pak:FourQuestionsBirkhoff, Stanley:LinearHomDioMagicLabel, Stanley:Magic-labelings}, among others).
The exact value of the volume of the $p$th Birkhoff polytope $B_p$ is known (see~\cite{Pixton:The-Volumes-of-Birkhoff}) only up to $p = 10$. Canfield and McKay (see~\cite{Canfield:VolumeBirkhoff}) presented an asymptotic formula for the volume of the $p$th Birkhoff polytope $B_p$. In~\cite{Barvinok:AsymptoticEstimatesNumberTransportation}, Barvinok  also presented asymptotic upper and lower bounds for the volumes of $p \times q$ classical transportation polytopes and the number of $p \times q$ semi-magic rectangles.

The currently known exact values of $a(p)$ are summarized in Table~\ref{table:birkhoff-volume}. 
\begin{table}[hbt]
{\small
\begin{tabular}{|c|l|}
\hline
$p$ & $a(p)$ \\ \hline
1&1 \\ \hline
2&1 \\ \hline
3&3 \\ \hline
4&352 \\ \hline
5&4718075 \\ \hline
6&14666561365176 \\ \hline
7&17832560768358341943028 \\ \hline
8&12816077964079346687829905128694016 \\ \hline
9&7658969897501574748537755050756794492337074203099 \\ \hline
10&5091038988117504946842559205930853037841762820367901333706255223000 \\ \hline
\end{tabular}
}
\caption{Normalized volumes of Birkhoff polytopes}\label{table:birkhoff-volume}
\end{table}

To compute more values it would useful to know the answer to the following problem:

\begin{openproblem}
Is there a short (polynomial time computable) formula for the normalized volume $a(p)$ of the $p \times p$ Birkhoff-von Neumann
polytope? 
\end{openproblem}

% Integer range of coordinates

Besides knowing the volumes and the number of vertices, we are interested in knowing the so-called \emph{integer range} of a coordinate in a transportation polytope $P$. This asks the following: fixing $i$ and $j$, do all integers in an interval appear as the value of the coordinate 
$x_{i,j}$ of among the set of lattice points of $P$? For classical transportation polytopes, the answer is yes:
\begin{lemma}[Diaconis and Gangolli~\cite{DG}, Integer range of a coordinate]\label{lemma:classical-integer-range}
For an entry $x_{i,j}$ of the transportation polytope with marginals $u$ and $v$, the set of all possible integral values are the integers on a segment.
\end{lemma}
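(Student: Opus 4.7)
The plan is to reduce the integer interpolation claim to the easy continuous interpolation supplied by convexity, using total unimodularity. Fix indices $(i_0,j_0)$. Let $\alpha := \min\{x_{i_0,j_0} : x \in P\}$ and $\beta := \max\{x_{i_0,j_0} : x \in P\}$; I want to show that every integer $c$ with $\alpha \le c \le \beta$ is attained by $x_{i_0,j_0}$ at some lattice point of $P$. (Because all vertices of $P$ are integral by Lemma~\ref{lemma:classicalrankdimension}, $\alpha$ and $\beta$ are themselves integers, so this is exactly the assertion that the integer range forms a consecutive block.)

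First, I would consider the slice
\[
F_c := P \cap \{x \in \R^{pq} : x_{i_0,j_0} = c\}.
\]
Convexity of $P$ gives $F_c \neq \emptyset$ at once: the linear functional $x \mapsto x_{i_0,j_0}$ attains both $\alpha$ and $\beta$ on the connected set $P$, hence also the intermediate value $c$. So the continuous interpolation is free.

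Next I would argue that $F_c$ is an integral polytope, so that it must contain a lattice point. Write $F_c = \{x \ge 0 : Ax = b,\ x_{i_0,j_0} = c\}$, where $A$ is the $p \times q$ transportation constraint matrix and $b$ is the marginal vector, and let $A'$ be the matrix obtained by appending the unit row $e_{(i_0,j_0)}^T$, with $c$ appended to the right-hand side. A short cofactor argument shows $A'$ is still totally unimodular: any square submatrix of $A'$ either avoids the appended row, and hence is a submatrix of $A$ (totally unimodular by Lemma~\ref{lemma:classicalrankdimension}(3)), or contains it, in which case Laplace expansion along the unique nonzero entry of that row returns, up to sign, a subdeterminant of $A$, again in $\{0,\pm 1\}$. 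Since the extended right-hand side $\binom{b}{c}$ is integral, the Hoffman--Kruskal theorem guarantees that every vertex of $F_c$ is integral, and any such vertex is a lattice point of $P$ with $x_{i_0,j_0} = c$.

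I do not foresee a serious obstacle. The only routine check is that appending a unit row to a totally unimodular matrix preserves total unimodularity, which is the cofactor expansion above. Conceptually, the point is that TU-ness is inherited by every single-coordinate slice of $P$, which is what allows the integer interpolation problem to piggyback on the trivial continuous interpolation supplied by convexity of $P$.
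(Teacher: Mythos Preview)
Your argument is correct. Appending a standard unit row to a totally unimodular matrix preserves total unimodularity (your cofactor expansion handles both the case where the selected columns include the $(i_0,j_0)$ column and the case where they do not, the latter giving a zero row and hence determinant $0$), and then Cramer's rule (or Hoffman--Kruskal) gives integrality of every vertex of the nonempty bounded slice $F_c$. One small remark: when you invoke Lemma~\ref{lemma:classicalrankdimension} for integrality of $\alpha$ and $\beta$, you are implicitly also using that the marginals $u,v$ are integral, which is the standing assumption of this subsection but is not part of that lemma's statement.

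The paper does not supply its own proof of this lemma; it simply attributes the result to Diaconis and Gangolli. That said, the argument most in keeping with the surrounding text---and with Diaconis--Gangolli's Markov-chain viewpoint---is the one implicit in Lemma~\ref{markovbases2way}: any two lattice points of $P$ are connected by a sequence of the $\pm1$ ``rectangular'' moves, and each such move changes the fixed coordinate $x_{i_0,j_0}$ by $0$ or $\pm1$; hence the values of $x_{i_0,j_0}$ along a path between two lattice points sweep through every integer in between. Your total-unimodularity argument is cleaner and more self-contained (it does not require first establishing connectivity of the fiber graph), while the moves argument is constructive and explains \emph{how} to pass from one value to the next, which is exactly what sequential importance sampling exploits.
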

This gives a method of performing the so-called \emph{sequential importance sampling} (see, e.g.,~\cite{Chen:SequentialImportanceSampling,Xi:EstimatingNumberMultiwaytables}). Chen et al.{} (see~\cite{Chen:SequentialImportanceSampling}) use the interval property to justify correctness of their algorithm for the sequential sampling of entries in multi-way contingency tables with given constraints. This method of sampling contingency tables with given margins introduced in~\cite{Chen:SequentialMonteCarlo} is later extended by Chen in~\cite{Chen:ConditionalInference} to sample tables with fixed marginals and a given set of structural zeros. 

For many applications, again including sampling and enumerating lattice points,  we are interested in having  a set of  ``local moves'' or operations that connect the set of \emph{all} integer contingency tables with fixed margins. E.g.,  such a set of moves is important in probability and statistics in the interest of running Markov chains on contingency tables (see~\cite{DO1}). As it turns out the set of moves necessary is quite simple:

\begin{lemma} \label{markovbases2way}
The set of ``rectangular'' vectors whose entries are $0$,$-1$, and $1$ (as in Figure~\ref{figure:graver-basis})  corresponding to 
4-cycles in the complete bipartite graph $K_{p,q}$, with a $1$ and a $-1$ in each row column are integer vectors in the kernel 
of the constraint matrix of $2$-way transportation polytopes. They are simple moves that connect all lattice points of any 
$2$-way transportation polytope.
\end{lemma}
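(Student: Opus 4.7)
The plan is to prove two things: first, that each rectangular $4$-cycle vector lies in the kernel of the constraint matrix $A$ of a $2$-way transportation polytope; second, that these moves suffice to connect any two integer lattice points of a given such polytope while staying inside the non-negative orthant.

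The first part is essentially a direct verification. Each rectangular move places $+1$ at two entries $(i_1,j_1)$ and $(i_2,j_2)$ and $-1$ at the two entries $(i_1,j_2)$ and $(i_2,j_1)$ forming a $4$-cycle in $K_{p,q}$, so every row and column touched by the move contains exactly one $+1$ and one $-1$. Hence every row and column sum of the move vector vanishes, and the vector lies in $\ker A$ by Lemma~\ref{lemma:classicalrankdimension}.

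For connectivity, I would argue by induction on the $\ell_1$ distance $\|x-y\|_1$ between two lattice points $x, y$ sharing the same margins. The difference $z=x-y$ is an integer table with all row and column sums zero. Pick any index $(i_1,j_1)$ where $z_{i_1,j_1}>0$; the zero column sum of $z$ in column $j_1$ then produces some row $i_2 \neq i_1$ with $z_{i_2,j_1}<0$, and the zero row sum of $z$ along row $i_2$ produces some column $j_2 \neq j_1$ with $z_{i_2,j_2}>0$. The corresponding rectangular move, which decreases the corners $(i_1,j_1)$ and $(i_2,j_2)$ by one and increases the corners $(i_1,j_2)$ and $(i_2,j_1)$ by one, keeps all entries in $\Z_{\geq 0}$: at the two decremented corners $x$ strictly exceeds $y\geq 0$, and the other two corners are only increased. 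A short case analysis then shows that the $\ell_1$ distance to $y$ drops by at least $2$ (three of the four affected corners strictly improve, while the fourth can worsen by at most one), so the induction closes.

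The main obstacle, and the place where the argument must be most careful, is to choose the four corners so as to guarantee \emph{simultaneously} that (a) the move preserves non-negativity of the table and (b) the move makes strict progress toward $y$ in $\ell_1$. The zero row and column sums of the sign-indefinite difference $z$ are exactly what ensure that such a configuration of two positive and one negative entry of $z$ on the corners of a rectangle can be located; everything else is routine bookkeeping.
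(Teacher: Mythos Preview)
Your argument is correct. The paper itself does not supply a proof of this lemma; it is stated as a known fact and immediately used to discuss Markov chains on the lattice points. So there is no ``paper's own proof'' to compare against.

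For the record, both parts of your argument are sound. The kernel claim is immediate. For connectivity, your choice of corners $(i_1,j_1)$, $(i_2,j_1)$, $(i_2,j_2)$ guarantees $z>0$, $z<0$, $z>0$ respectively, so the move is feasible (the two decremented entries of $x$ are at least $1$) and three of the four corners move strictly toward $y$ while only $(i_1,j_2)$ can move away, giving a net drop of at least $2$ in $\|x-y\|_1$. Since $\|x-y\|_1$ is a non-negative even integer (each row of $z$ sums to zero, so the row contributions to $\|z\|_1$ are even), the induction terminates. This is the standard elementary argument for this classical fact.
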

\begin{figure}[htbp]
\begin{center}
\begin{tabular}{|c|c|c|c|c|c|c|c|c|}
\hline
 0 & 0 & 0 & 0 & 0 & 0 & 0 & 0 & 0\\\hline
 0 & 0 &-1 & 0 & 0 & 0 & 1 & 0 & 0\\\hline
 0 & 0 & 0 & 0 & 0 & 0 & 0 & 0 & 0\\\hline
 0 & 0 & 0 & 0 & 0 & 0 & 0 & 0 & 0\\\hline
 0 & 0 & 1 & 0 & 0 & 0 &-1 & 0 & 0\\\hline
 0 & 0 & 0 & 0 & 0 & 0 & 0 & 0 & 0\\\hline
 0 & 0 & 0 & 0 & 0 & 0 & 0 & 0 & 0\\\hline
\end{tabular}
\end{center}
\caption{Typical monomial in Graver basis}\label{figure:graver-basis}
\end{figure}

Using these moves one can run a Markov chain on all the vertices of a transportation polytope, where we move from one vertex to another
by adding one of the randomly generated moves that preserves non-negativity. Cryan et al. (see~\cite{CDMS}) have shown that
the associated Markov chain mixes rapidly when the number $p$ or $q$ of rows or columns is assumed fixed.

This set of vectors is an example of a {\em Graver basis} for the kernel of the matrix associated to the $2$-way transportation polytope
in question (see~\cite{Graver:GraverBasis}).  Formally, to define a Graver basis, we first describe a partial order $\sqsubseteq$ on $\mathbb{Z}^n$. Given two integer vectors $u,v \in \mathbb{Z}^n$, we say $u \sqsubseteq v$ if $|u_k| \leq |v_k|$ and $u_kv_k \geq 0$ for all $k = 1,\dots,n$. Then the \emph{Graver basis} of a matrix $A$ is the set of all $\sqsubseteq$-minimal vectors in $\{x \in \mathbb{Z}^n \mid Ax = 0, x \not= 0\}$. Graver bases are
quite important in optimization (see Chapters 3 and 4 of \cite{alggeoopt} and the nice book \cite{Onn:NonlinearDiscreteOptimization} for details).

In the next section, we discuss multi-way transportation polytopes. As we will see, their behavior is much more complicated.

\section{Multi-way transportation polytopes}\label{section:multiway}

Classical transportation polytopes were called \emph{$2$-way transportation polytopes} because the coordinates $x_{i,j}$ have two indices. We can consider generalizations of $2$-way transportation polytopes by having coordinates indexed by three or more integers (e.g., $x_{i,j,k}$ or $x_{i,j,k,l}$).  As the number of indices grows the possible form and shape of constraints grows. There has been very active work on understanding
the corresponding polyhedra (see e.g., ~\cite{Haley:The-multi-index-problem,
Haley:Note-on-the-Letter, Moravek:On-the-necessary-conditions, Moravek:On-Necessary-ConditionsClass,
 Schell:Distribution-of-a-product,  Smith:Further-necessary, Smith:A-Procedure-for-Determining, Smith:On-the-Moravek-and-Vlach, Vlach:SolutionsPlanarTransportation}). As we will see here the case of $3$-way transportation problems, i.e., three indices,
is already so complicated that in a sense contains all polyhedral geometry and combinatorial optimization!

A {\em $d$-way table} of size $p_1 \times \dots \times p_d$ is a $p_1 \times p_2 \times \dots \times p_d$  array of non-negative
real numbers $x=(x_{i_1,\dots,i_d})$, $1\leq i_\ell\leq p_\ell$. Given an integer $m$, with  $0\leq m <d$, an {\bf $m$-margin} of the
$d$-way table $x$ is one of the $\binom{d}{m}$ possible $m$-tables obtained by summing the entries over all but $m$ indices.
For example, if $(x_{i,j,k})$ is a $3$-way table then its $0$-marginal is $x_{+,+,+}=\sum_{i=1}^{p_1}\sum_{j=1}^{p_2}\sum_{k=1}^{p_3}
x_{i,j,k}$, it has  three $1$-margins, which are $x_{i,+,+})=\sum_{j=1}^{p_2}\sum_{k=1}^{p_3}x_{i,j,k}$ and likewise
$(x_{+,j,+})$, $(x_{+,+,k})$. Finally $x$ has three $2$-margins given by the sums
$(x_{i,j,+})=\sum_{k=1}^{p_3} x_{i,j,k}$ and likewise $(x_{i,+,k})$, $(x_{+,j,k})$.

A {\em $d$-way transportation polytope} of size $p_1 \times \dots \times p_d$ defined by $m$-marginals is the set of all $d$-way tables of size $p_1 \times p_2 \times \dots \times p_d$ with the specified marginals. When $d=2$, we recover the classical transportation polytopes of the previous section. When $d \geq 3$, the transportation polytope is also called a \emph{multi-way transportation polytope}. When $d=3$, we will typically denote the size of the transportation polytope by $p \times q \times s$ instead of $p_1 \times p_2 \times p_3$.

In a well-defined sense the most important margins of a $d$-way transportation polytope are the $(d-1)$-margins:

\begin{theorem}[Junginger~\cite{Junginger:MultidimensionalTransportationProblem}] 
There exists a polynomial
time algorithm that, given a linear (integer)
minimization problem over a $d$-way $p_1\times \dots \times p_d$
transportation polytope $T_{d,m}$ with fixed $m$-marginals and cost vector $c$,
computes an associated linear functional $\hat{c}$ and a
$d$-way $(p_1+1)\times \dots \times (p_d+1)$ transportation polytope
$T_{d,d-1}$ with fixed $(d-1)$-marginals such that if $y$
is an optimal (integral) solution for $T_{d,d-1}$ its entries
with indices with the original range also give an optimal (integral) 
solution of $T_{d,m}$.
\end{theorem}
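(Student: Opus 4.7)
The plan is to give an explicit polynomial-time construction of the enlarged polytope $T_{d,d-1}$ and a new cost vector $\hat c$, together with a cost-preserving bijection between feasible tables of $T_{d,m}$ and feasible tables of $T_{d,d-1}$ whose \emph{interior} entries (those indexed by $(i_1,\ldots,i_d) \in [p_1] \times \cdots \times [p_d]$) equal the original table. Because the bijection will be the identity on the interior, it suffices to set $\hat c$ equal to $c$ on interior entries and $0$ on the \emph{slack} entries (those with at least one coordinate equal to $p_\ell+1$); then $\hat c \cdot y = c \cdot x$ under the bijection, so optimal (integral) solutions on one side correspond to optimal (integral) solutions on the other.

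First I would pick a sufficiently large integer slack constant $M$, larger than any partial sum that can appear in a feasible $x \in T_{d,m}$ (the total mass of any $m$-marginal suffices), and then specify the prescribed $(d-1)$-marginals of $y$ in terms of the input $m$-marginals and $M$. For each direction $\ell$ and each tuple $(i_1,\ldots,\hat{i_\ell},\ldots,i_d)$ with $i_k \in [p_k+1]$, the prescribed value is a carefully chosen expression depending on which subset of the surviving coordinates lies in the slack. The design goal is that the slack entries absorb exactly the complementary freedom, so that the only restriction imposed on interior entries is the original $m$-marginal system.

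Given this setup, the forward bijection sends $x \in T_{d,m}$ to the table $y$ that agrees with $x$ on the interior and whose slack entries are computed recursively from the prescribed $(d-1)$-marginals, working outward from tuples with one slack coordinate to tuples with more slack coordinates. Non-negativity holds because $M$ dominates all interior partial sums, integrality follows from integrality of the input, and $\hat c \cdot y = c \cdot x$ is immediate since $\hat c$ vanishes on the slack. The reverse map is restriction to the interior; the prescribed $m$-marginals of $x$ can be recovered from the $(d-1)$-marginals of $y$ by re-summing out the appropriate slack positions, and the whole construction runs in time polynomial in the input size.

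The hard part is designing the $(d-1)$-marginals of $T_{d,d-1}$ so that they simultaneously (i) are consistent across directions, i.e., agree on their common lower-dimensional projections, so that $T_{d,d-1}$ is non-empty; (ii) recover exactly the original $m$-marginal constraints on interior entries and nothing stronger; and (iii) leave the slack entries uniquely determined by the recursion above. The subtlety is that a slack entry with $k \geq 1$ slack coordinates sits inside several prescribed $(d-1)$-marginals, one per direction, and these prescribed values must reconcile under re-summation. I expect this to reduce to an inclusion-exclusion identity relating the input $m$-marginals to partial sums of the extended table, and carrying out that combinatorial bookkeeping cleanly is the main technical hurdle of the proof.
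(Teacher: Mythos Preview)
Your route diverges from the paper's in one essential respect: you take $\hat c$ equal to $c$ on interior entries and $0$ on all slack entries, and you aim for an honest affine bijection between $T_{d,m}$ and $T_{d,d-1}$. The paper (which illustrates the construction for $d=3$, $m=1$) does neither. It assigns a large penalty cost $M$ to the slack entries with exactly one slack coordinate, lets the enlarged polytope remain full-dimensional, and shows only that (a) every $x\in T_{d,m}$ lifts to some $y$ with $\hat c\cdot y=c\cdot x+\text{const}$, and (b) for an \emph{optimal} $y$ the penalty drives the relevant slack entries to the values occurring in that lift, so that the interior restriction lands in $T_{d,m}$. There is no bijection: a generic feasible $y$ need not restrict to a point of $T_{d,m}$, but optimality takes care of this.

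Your bijection plan forces $T_{d,d-1}$ to have the same dimension as $T_{d,m}$, and this is exactly the hurdle you flag but do not clear. The $(d-1)$-marginals of $y$ determine the $m$-marginals of the \emph{enlarged} table, not of its interior block; the discrepancy is a sum of slack entries, and in your outward recursion those entries depend on the interior $x$ rather than being fixed constants. For instance, with $d=3$ and $m=1$, the interior $1$-marginal $\sum_{j\leq q}\sum_{k\leq s}y_{i,j,k}$ differs from a fixed combination of prescribed $2$-marginals by the double-slack entry $y_{i,\,q+1,\,s+1}$; unless that entry is pinned to a constant (for example via a zero marginal such as $a_{+,\,q+1,\,s+1}=0$), the interior restriction of a general feasible $y$ will not satisfy the prescribed $b_{i,+,+}$. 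A penalty term in $\hat c$, as in the paper, is precisely what sidesteps this difficulty by making the correspondence work only at the optimum rather than pointwise on the two polytopes.
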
 

\begin{example}
We illustrate Junginger's theorem in the $3$-way case. 
Suppose we have a linear optimization problem over a
$3$-way $p \times q \times s$ transportation defined by $1$-marginals:
\[
\begin{array}{ll}
\text{minimize} 
        & \displaystyle \sum_{i_1=1}^p \sum_{i_2=1}^q \sum_{i_3=1}^s  c_{i_1,i_2,i_3}x_{i_1,i_2,i_3} \\
\text{subject to}
        & 
        \left\{
        \begin{array}{l}
\displaystyle \sum_{i_2=1}^q \sum_{i_3=1}^s x_{i_1,i_2,i_3}=b_{i_1,+,  +},\\
\displaystyle \sum_{i_1=1}^p \sum_{i_3=1}^s x_{i_1,i_2,i_3}=b_{+,  i_2,+},\\
\displaystyle \sum_{i_1=1}^p \sum_{i_2=1}^q x_{i_1,i_2,i_3}=b_{+,  +,  i_3},\\
x_{i_1,i_2,i_3} \geq 0.
        \end{array}
        \right.
\end{array}
\]

Junginger showed this can be solved instead using a $3$-way $(p+1) \times (q+1) \times (s+1)$ 
transportation polytope with fixed $2$-marginals:
\[
\begin{array}{ll}
\text{minimize} 
        & \displaystyle \sum_{i_1=1}^{p+1} \sum_{i_2=1}^{q+1} \sum_{i_3=1}^{s+1}  \hat{c}_{i_1,i_2,i_3}y_{i_1,i_2,i_3} \\
\text{subject to}
        & 
        \left\{
        \begin{array}{l}
\displaystyle \sum_{i_1=1}^{p+1} y_{i_1,i_2,i_3}=a_{+,i_2,i_3},\\
\displaystyle \sum_{i_2=1}^{q+1} y_{i_1,i_2,i_3}=a_{i_1,+,i_3},\\
\displaystyle \sum_{i_3=1}^{s+1} y_{i_1,i_2,i_3}=a_{i_1,i_2,+},\\
y_{i_1,i_2,i_3} \geq 0.
        \end{array}
        \right.
\end{array}
\]
Here the cost coefficients $\hat{c}_{i_1,i_2,i_3}$ and the $2$-marginals $a$ are as follows:
\[
\hat{c}_{i_1,i_2,i_3}= \left\{
\begin{array}{ll}
c_{i_1,i_2,i_3}, & \mbox{if all 3 indices are within the original ranges} \\
M, & \mbox {if exactly 2 of the indices are within the original range} \\
0, & \mbox{otherwise.} \cr
\end{array}
\right.
\]
Let $\beta=\max(b_{i_1,+,+},b_{+,i_2,+},b_{+,+,i_3})$.

When $i_1,i_2,i_3$ stay within the original ranges:
\[ a_{+,i_2,i_3}=\beta; \quad a_{i_1,+,i_3}=\beta; \quad a_{i_1,i_2,+}=\beta. \]
When we go outside the ranges in exactly one of the indices:
\[a_{+,q+1,i_3}=p\beta- b_{+,+,i_3}, \quad a_{+,i_2,s+1}=p\beta -b_{+,i_2,+},\]
\[a_{i_1,+,s+1}=q\beta- b_{i_1,+,+}, \quad a_{p+1,+,i_3}=q\beta -b_{+,+,i_3},\]
\[a_{p+1,i_2,+}=s\beta- b_{+,i_2,+}, \quad a_{i_1,q+1,+}=s\beta -b_{i_1,+,+}.\]
Finally, when exactly two of the indices are outside the original range:
\[a_{+,q+1,s+1}=a_{p+1,+,s+1}=a_{p+1,q+1,+}=\beta.\]

Now for each solution $x_{i_1,i_2,i_3}$ of the
$3$-way problem with $1$-marginals we can recover a {\bf unique} solution
$y_{i_1,i_2,i_3}$ of the $3$-way problem with $2$-marginals that has the
same objective function value plus a constant. If $x$ is integral,
then $y$ will be integral too when the marginals have integral
entries. For this set the value of $y_{i_1,i_2,i_3}:=x_{i_1,i_2,i_3}$ when all $i_\ell$ are in the
original range. Using the new $2$-marginal equations determine the values of those
variables $y_{i_1,i_2,i_3}$ with exactly \emph{one} index outside original range. 
Thus for fixed $i_2,i_3$ in the original range:
\[y_{p+1,i_2,i_3}=a_{+,i_2,i_3}-\sum_{i_1=1}^p y_{i_1,i_2,i_3} =\beta \sum_{i_1=1}^p x_{i_1,i_2,i_3} \geq 0.\]
Next fill the values of those variables $y_{i_1,i_2,i_3}$ with exactly \emph{two} indices outside range. Finally fill the variable $y_{p+1,q+1,s+1}$.
It is easy (but tedious) to check that $y_{i_1,i_2,i_3}$ is indeed feasible in the $2$-marginal problem.

Now the objective function value is
\[\sum_{i_1=1}^{p+1} \sum_{i_2=1}^{q+1} \sum_{i_3=1}^{s+1} \hat{c}_{i_1,i_2,i_3}y_{i_1,i_2,i_3}= \sum_{i_1=1}^{p} \sum_{i_2=1}^{q} \sum_{i_3=1}^{s}
 c_{i_1,i_2,i_3}x_{i_1,i_2,i_3}\ + \]
\[M \left(\sum_{i_1=1}^p x_{i_1,q+1,s+1} + \sum_{i_2=1}^q x_{p+1,i_2,s+1} +\sum_{i_3=1}^s x_{p+1,q+1,i_3}\right)\]
which is equal to
\[\sum_{i_1=1}^{p} \sum_{i_2=1}^{q} \sum_{i_3=1}^{s}
 c_{i_1,i_2,i_3}x_{i_1,i_2,i_3}\ +3M\beta.\]

Conversely, if $y$ is the optimal solution for
the $2$-marginal problem, the restriction $x$ to those variables with indices 
$i_1 \leq p, i_2 \leq q, i_3 \leq s$ is an optimal solution of the $1$-marginal
problem. For this note that because $y$ is optimal the entries of variables with
two indices above the original range (e.g. $y_{p+1,q+1,i_3}$) must
be zero because their cost is $M$ (a huge constant). Next check $x_{i_1,i_2,i_3}$ is feasible for the $1$-marginal problem. 
Non-negativity is easy: note that
\[\sum_{i_2=1}^q y_{p+1,i_2,i_3}=\sum_{i_2=1}^q \left(a_{+,i_2,i_3}-\sum_{i_1=1}^p y_{i_1,i_2,i_3}\right) = q\beta-b_{+,+,i_3}.\]
Therefore, for the $1$-marginal $b_{+,+,i_3}$,
\[ \sum_{i_1=1}^p \sum_{i_2=1}^q y_{i_1,i_2,i_3}= \sum_{i_1=1}^{p+1} \sum_{i_2=1}^q y_{i_1,i_2,i_3} - \sum_{i_2=1}^q y_{p+1,i_2,i_3} = q\beta -(q\beta -b_{+,+,i_3})= b_{+,+,i_3}.\]
and the same can be checked for other $1$-marginals.
\end{example}

Depending on the application a transportation problem may have a combination of margins that define polyhedron.

For $3$-way transportation problems there are two natural generalization of $2$-way transportation polytopes to $3$-way transportation polytopes, whose feasible points are $p \times q \times s$ tables of non-negative reals satisfying certain sum conditions:
\begin{itemize}
\item First, consider the $3$-way transportation polytope of size $p \times q \times s$ defined by \emph{$1$-marginals}: Let $u=(u_1,\dots,u_p) \in \R^p$,
$v=(y_1,\dots,y_q) \in \R^q$, and $w=(w_1,\dots,w_s) \in \R^s$ be three vectors. Let $P$ be the polyhedron defined by the following $p+q+s$ equations in the $pqs$ variables $x_{i,j,k} \in \R_{\geq 0}$ ($i\in[p], j\in[q], k\in[s]$):
\begin{equation}\label{equation:1marginals}
\sum_{j,k} x_{i,j,k} = u_{i}, \forall i \quad 
\sum_{i,k} x_{i,j,k} = v_{j}, \forall j \quad 
\sum_{i,j} x_{i,j,k} = w_{k}, \forall k.
\end{equation}
In~\cite{YKK}, $3$-way transportation polytopes defined by all $1$-marginals are known as \emph{$3$-way axial transportation polytopes}.

\item Similarly, a $3$-way transportation polytope of size $p \times q \times s$ can be defined by specifying three real-valued matrices $U$, $V$, and $W$ of sizes $q \times s$, $p \times s$, and $p \times q$ (respectively). These three matrices specify the line-sums resulting from fixing two of the indices of entries and adding over the remaining index. That is to say, the polyhedron $P$ is defined by the following $pq+ps+qs$ equations, the \emph{$2$-marginals}, in the $pqs$ variables $x_{i,j,k} \in \R_{\geq 0}$ satisfying:
\begin{equation}\label{equation:2marginals}
\sum_{i} x_{i,j,k} = U_{j,k}, \forall j,k\quad 
\sum_{j} x_{i,j,k} = V_{i,k}, \forall i,k\quad 
\sum_{k} x_{i,j,k} = W_{i,j}, \forall i,j.
\end{equation}
In~\cite{YKK}, the $3$-way transportation polytopes defined by $2$-marginals are called \emph{$3$-way planar transportation polytopes}.
\end{itemize}

\subsection{Why $d$-way transportation polytopes are harder}

The $3$-way transportation polytopes are very interesting because of the following universality theorem of De~Loera and Onn in~\cite{DO2} which says that for any rational convex polytope $P$, there is a $3$-way planar transportation polytope $T$ isomorphic to $P$ in a
very strong sense. 

We say a polytope $P\subset{\mathbb R}^p$ is {\em representable} as a polytope
$T\subset{\mathbb R}^q$ if there is an injection $\sigma:\{1,\dots,p\}\longrightarrow \{1,\dots,q\}$  such that the projection
$\pi:{\mathbb R}^q\longrightarrow{\mathbb R}^p$
\[x=(x_1,\dots,x_q)\mapsto \pi(x)=(x_{\sigma(1)},\dots,x_{\sigma(p)})\]
is a bijection between $T$ and $P$ and between the sets of integer
points $T\cap{\mathbb Z}^q$ and $P\cap{\mathbb Z}^p$.

Note that if $P$ is representable as $T$ then $P$ and $T$ have same facial
structure and all linear or integer programming programs are
polynomial-time equivalent. We can state the universality result as follows:

\begin{theorem}[Universality~\cite{DO2}] \label{theorem:universality-with-complexity}
Any polytope $P=\{y\in{\mathbb R}_{\geq 0}^n:Ay=b\}$
with integer $m\times n$ matrix $A=(a_{i,j})$ and integer vector $b$
is polynomial-time representable as a slim  $r \times c \times 3$ transportation polytope
\[T\quad=\quad\left\{\,x\in{\mathbb R}_{\geq 0}^{r\times c\times 3}\ :\, 
\sum_i x_{i,j,k}=U_{j,k}\,,\
\sum_j x_{i,j,k}=V_{i,k}\,,\ 
\sum_k x_{i,j,k}=W_{i,j}\,\right\}\ ,\]
with $r=O(m^2(n+L)^2)$ rows and $c=O(m(n+L))$ columns,
where $L:=\sum_{j=1}^n\max_{i=1}^m \lfloor \log_2|a_{i,j}| \rfloor$.
\end{theorem}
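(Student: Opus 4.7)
The plan is to proceed in two stages. First, I would reduce the arbitrary system $Ay=b,\ y\ge 0$ to an equivalent one whose coefficient matrix has entries in a small fixed set such as $\{-1,0,1,2\}$; this is where the factor $L$ enters. Second, I would realize the reduced system as the set of $2$-marginal constraints of a slim $r\times c\times 3$ transportation polytope by means of an explicit gadget construction, exploiting the fact that in a slim polytope each $W_{i,j}=x_{i,j,1}+x_{i,j,2}+x_{i,j,3}$ partitions the total mass at position $(i,j)$ into three non-negative pieces, giving exactly the flexibility needed to encode general linear relations.

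For the first stage, I would write every coefficient $a_{i,j}$ in binary and introduce $\lfloor\log_2|a_{i,j}|\rfloor$ scaled copies $y_j^{(0)},y_j^{(1)},\ldots$ of each variable together with linking equations $y_j^{(t+1)}=2\,y_j^{(t)}$, which involve only coefficients in $\{1,2\}$. Each original row $\sum_j a_{i,j}y_j=b_i$ is then expressible as a $\{-1,0,1\}$-combination of the scaled copies. The total number of new variables and new equations is $O(n+L)$, the projection $y^{(0)}\mapsto y$ is a bijection on both real feasible sets and integer feasible sets, and the whole transformation is computable in polynomial time.

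For the second stage, I would assign to each equation of the reduced system a row index $i$ and to each variable a column index $j$, and use the third index $k\in\{1,2,3\}$ as a ``sign slot'' $x_{i,j,1}, x_{i,j,2}, x_{i,j,3}$ holding the positive part, the negative part, and a non-negative slack, respectively, of the contribution of variable $j$ to equation $i$. The $W$-marginals $W_{i,j}$ are chosen large enough to cover every variable value that can occur; the $V$-marginals $V_{i,k}$ are set so that summing across $j$ enforces either the original equation (at $k=1$) or the cancellation of sign-copies (at $k=2,3$); and the $U$-marginals $U_{j,k}$ are set so that summing across $i$ forces all copies of a given variable to take a common value, thereby identifying $y_j$ uniquely from the entries. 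One can verify by substitution that the coordinate-erasing projection $\pi$ that keeps the ``positive'' slot at a distinguished row for each original variable is a bijection $T\to P$ preserving integrality. Sizing up, one replicates the row pattern once per original equation and once per original variable, giving $c=O(m(n+L))$ columns and $r=O(m^2(n+L)^2)$ rows.

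I expect the main obstacle to be designing the gadget so that, for every admissible vector of $2$-marginals, the entries $x_{i,j,k}$ not distinguished by $\pi$ are \emph{uniquely determined} (up to a bijection with the originally prescribed $y$), so that no spurious integer or real solutions appear; the third-index slack must soak up exactly the freedom introduced by the bit encoding and no more. A careful bookkeeping of which $2$-marginals are prescribed as strictly positive, combined with the chain of forcing arguments across the three layers, is what makes the construction work. Once this bijective property is established, the injection $\sigma$ is the list of coordinates corresponding to the positive-part slots for each original variable, the facial structure is automatically preserved by the coordinate-erasing projection, and polynomial-time computability follows since every step above is an explicit combinatorial construction in the bit-length of $(A,b)$.
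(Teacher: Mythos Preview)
Your first stage (binary expansion of coefficients to obtain a $\{-1,0,1,2\}$-matrix) is exactly the paper's Step~1, so that part is fine.

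The gap is in your second stage. The paper does \emph{not} pass directly from the reduced system to a planar $3$-way transportation polytope. Instead it uses two further steps: Step~2 encodes the reduced system as a \emph{face} of an \emph{axial} (1-margin) $3$-way transportation polytope, by placing each variable in a ``vertical box'' and each equation in a ``horizontal table'' with entries outside the boxes forced to zero; Step~3 then converts any axial polytope with upper bounds on entries into a planar polytope. The quadratic blowup $r=O(m^2(n+L)^2)$ arises precisely from this last conversion, which you have entirely omitted.

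Your proposed direct encoding (row $=$ equation, column $=$ variable, third index $=$ positive/negative/slack) does not work as stated. The marginal $U_{j,k}=\sum_i x_{i,j,k}$ is a \emph{sum} over all equations, so prescribing it cannot by itself force the copies of $y_j$ in different rows to coincide; likewise $V_{i,k}=\sum_j x_{i,j,k}$ gives only a single aggregate per equation and layer, not enough to enforce $\sum_j a_{i,j}y_j=b_i$ when the $a_{i,j}$ vary. You yourself flag uniqueness of the undistinguished entries as ``the main obstacle,'' but in fact it is not an obstacle to be overcome within this gadget---it is the reason the paper routes through the axial construction first, where the box structure and forced zeros do the rigidity work. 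Finally, your size accounting (``replicate the row pattern once per equation and once per variable'') does not match the claimed bounds: with $m$ rows and $O(n+L)$ columns your sketch would give an $O(m)\times O(n+L)\times 3$ array, not the stated $O(m^2(n+L)^2)\times O(m(n+L))\times 3$.
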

The constructive proof of Theorem \ref{theorem:universality-with-complexity} follows three steps.
\begin{enumerate}
\item Decrease the size of the coefficients used in the constraints.
\item Encode the polytope $P$ as a transportation polytope with $1$-margins and with some entries bounded
\item Encode any transportation polytope with $1$-margins and bounded entries into a new transportation polytope with $2$-margins
\end{enumerate}

We only explain steps 1 and 2 which already give an interesting corollary.

{\bf Step 1:} Given $P=\{y\geq 0:Ay=b\}$ where $A=(a_{i,j})$ is an integer matrix
and $b$ is an integer vector. We represent it as a polytope $Q=\{x\geq
0:Cx=d\}$, in polynomial-time, with a $\{-1,0,1,2\}$-valued matrix
$C=(c_{i,j})$ of coefficients. For this use the binary expansion
$|a_{i,j}|=\sum_{s=0}^{k_j}t_s 2^s$ with all $t_s\in\{0,1\}$, we
rewrite this term as $\pm\sum_{s=0}^{k_j}t_s x_{j,s}$.

For example, the equation $3y_1-5y_2+2y_3=7$ becomes
\[
\begin{array}{rrrrrrrcl}
2x_{1,0} & -x_{1,1} &          &          &          &          &         &=&0,\\                                                                                
         &          & 2x_{2,0} & -x_{2,1} &          &          &         &=&0,\\                                                                                
         &          &          & 2x_{2,1} & -x_{2,2} &          &         &=&0,\\                                                                                
         &          &          &          &          & 2x_{3,0} & -x_{3,1}&=&0,\\                                                                                
 x_{1,0} & +x_{1,1} & -x_{2,0} &          & -x_{2,2} &          & +x_{3,1}&=&7.
\end{array}
\]

{\bf Step 2:} Here is a sketch.  Each equation $k=1,\dots,m$ will be
encoded in a ``horizontal table'' plus an extra layer of ``slacks''.
Each variable $y_j$, $j=1,\dots,n$ will be encoded in a ``vertical
box''. Other entries are zero. See Figure~\ref{figure:universal-step-2}.
\begin{figure}[hbt]
\begin{center}
\includegraphics[width=4 cm]{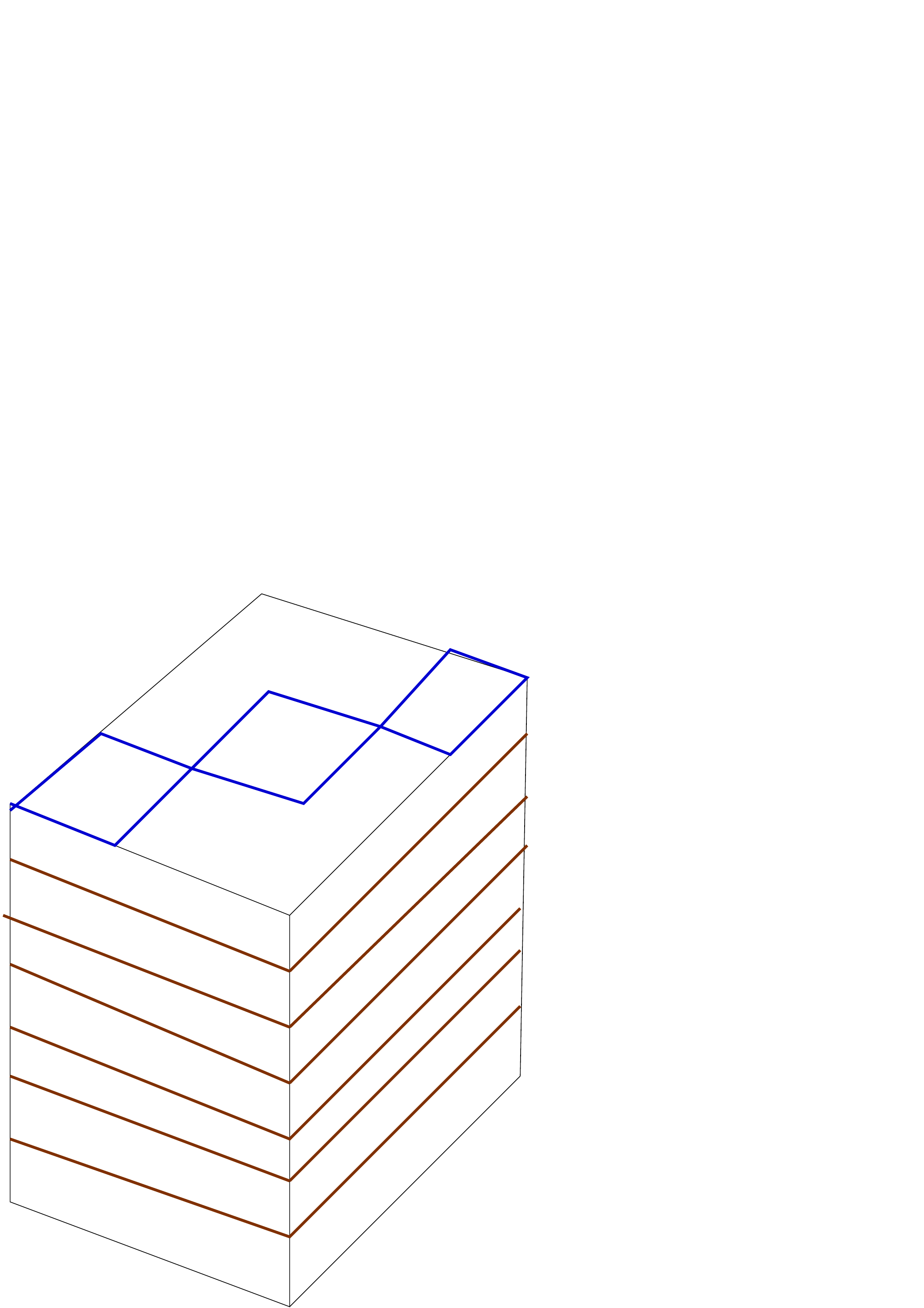}
\end{center}
\caption{Each equation is encoded in one of the $6$ horizontal tables, with the seventh table used for slacks}\label{figure:universal-step-2}
\end{figure}
Given $P=\{y\geq 0:Ay=b\}$ where $A=(a_{i,j})$ is an $m\times n$
integer matrix and $b$ is an integer vector: we assume that $P$ is
bounded and hence a polytope, with an integer upper bound $U$ on the
value of any coordinate $y_j$ of any $y\in P$.
The sizes of the layers will be given by the numbers
$r_j\quad:=\quad \max \left(\sum_k\{a_{k,j}:a_{k,j}>0\}\,,\,\sum_k\{|a_{k,j}|:a_{k,j}<0\}\right)$
and $r:=\sum_{j=1}^n r_j$, $R:=\{1,\dots,r\}$, $m+1$ and $H:=\{1,\dots,m+1\}$.
Each equation $k=1,\dots,m$ is encoded in a ``horizontal table''
$R\times R\times\{k\}$. The last horizontal table $R\times R\times\{m+1\}$ is included
for consistency and its entries can be regarded as ``slacks".
Each variable $y_j$, $j=1,\dots,n$ will be encoded in a ``vertical box"
$R_j\times R_j \times H$, where $R=\biguplus_{j=1}^n R_j$ is the natural
partition of $R$ with $|R_j|=r_j$, namely with $R_j:=\{1+\sum_{l<j}r_l,\dots,\sum_{l\leq j}r_l\}$.

For instance, if we have three variables,
with $r_1=3, r_2=1, r_3=2$ then $R_1=\{1,2,3\}, R_2=\{4\}, R_3=\{5,6\}$,
and the top view of the matrix $x=(x_{i,j,+})$ is

\[
\left(\begin{matrix}
 x_{1,1,+} & x_{1,2,+} & 0 & 0 & 0 & 0 \cr
 0 & x_{2,2,+} & x_{2,3+} & 0 & 0 & 0 \cr
 x_{3,1,+} & 0 & x_{3,3,+} & 0 & 0 & 0 \cr
 0 & 0 & 0 & x_{4,4,+} & 0 & 0 \cr
 0 & 0 & 0 & 0 & x_{5,5,+} & x_{5,6,+} \cr
 0 & 0 & 0 & 0 & x_{6,5,+} & x_{6,6,+} \cr
\end{matrix} \right)
 \ =\ \left(\begin{matrix}
 y_1 & {\bar y}_1 & 0 & 0 & 0 & 0 \cr
 0 & y_1 & {\bar y}_1 & 0 & 0 & 0 \cr
 {\bar y}_1 & 0 & y_1 & 0 & 0 & 0 \cr
 0 & 0 & 0 & U & 0 & 0 \cr
 0 & 0 & 0 & 0 & y_3 & {\bar y}_3 \cr
 0 & 0 & 0 & 0 & {\bar y}_3 & y_3 \cr
\end{matrix}\right).
\]

The actual vertical position is decided with respect to the equations
that contain a variable. Now, to specify the actual $1$-margins: All
``vertical'' plane-sums are set to the same value $U$, that is,
$u_j:=v_j:=U$ for $j=1,\dots,r$. All entries not in the union
$\biguplus_{j=1}^n R_j\times R_j\times H$ of the variable boxes will
be forbidden. The horizontal plane-sums $w$ are determined as
follows: For $k=1,\dots,m$, consider the $k$th equation $\sum_j
a_{k,j}y_j=b_k$.  Define the index sets $J^+:=\{j:a_{k,j}>0\}$ and
$J^-:=\{j:a_{k,j}<0\}$, and set $w_k:=b_k+U\cdot\sum_{j\in
J^-}|a_{k,j}|$.

\begin{example}
What do the three steps of this construction do, if one
starts with the zero-dimensional polytope  $P=\{y \mid 2y=1, \, y\geq 0\}$? In this case, we obtain the $2$-margins of a $3$-way transportation polytope shown in Figure~\ref{figure:universality0dim}.
\begin{figure}[hbt]
\begin{center}
\includegraphics[width=8 cm]{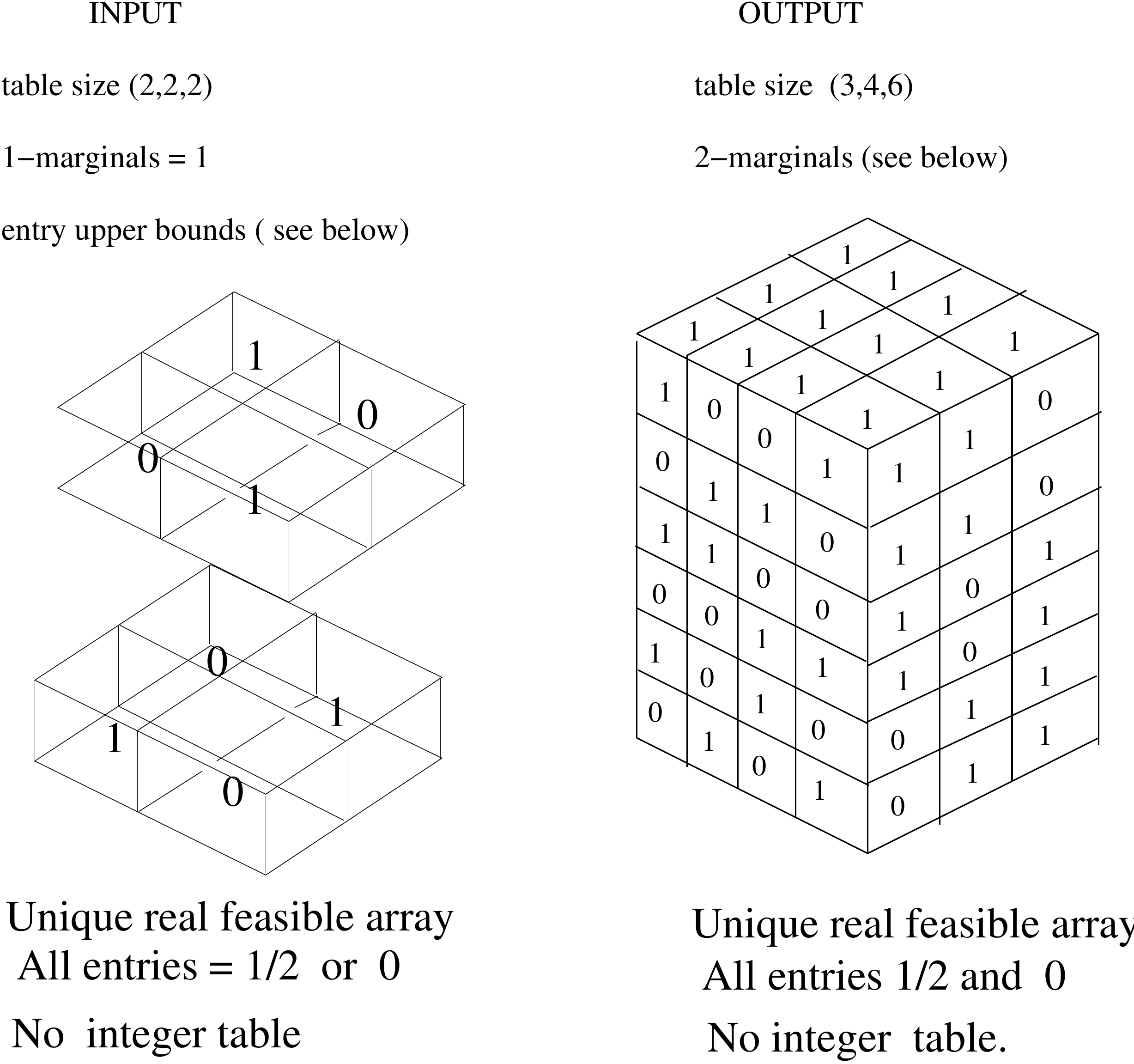}
\end{center}
\caption{Illustration of the universality theorem: A polyhedron consisting of the single point $y=\frac{1}{2}$ is represented
by a $3 \times 4 \times 6$ polytope with 2-margins shown at the right of the figure. The left side of the figure shows the 
encoding using only the first two steps of the algorithm.}\label{figure:universality0dim}
\end{figure}
\end{example}

\subsection{Comparing $2$-way and $3$-way transportation polytopes}

We want to stress some consequences of the construction. First of all, simply from the first two steps above
the following interesting theorem follows. Any rational polyhedron is a face of some axial $3$-way transportation polytope.

\begin{corollary}\label{corollary:1marginface}
Any rational polytope
$P=\{ y \in {\mathbb R}^n \mid Ay=b, \, y \geq 0 \}$ is polynomial-time
representable as {\bf a face} of a $3$-way $r \times c \times 3$
transportation polytope with $1$-margins
\[T=\left\{\,x\in {\mathbb R}_{\geq 0}^{r\times c\times 3}\ :\ 
\sum_{j,k} x_{i,j,k}=u_{i}, 
\,
\sum_{i,k} x_{i,j,k}=v_{j},
\,
\sum_{i,j} x_{i,j,k}=w_{k}
\right\}\ .\]
\end{corollary}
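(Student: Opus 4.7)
The proof I propose follows directly from Steps 1 and 2 in the sketched argument for Theorem \ref{theorem:universality-with-complexity}, and requires none of the omitted Step 3. The essential observation is that the construction of Step 2 produces an axial $3$-way transportation polytope (defined only by $1$-margins) together with a list of coordinates required to be zero (the ``forbidden'' entries outside the variable boxes), and forcing a non-negativity constraint $x_{i,j,k}\geq 0$ to hold with equality cuts out a face. Since an intersection of finitely many faces is again a face, the set of feasible tables produced by Step 2 is itself a face of the larger $1$-margin polytope.

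Concretely, I would first apply Step 1 to replace $P$ by a polytope $Q=\{x\geq 0\mid Cx=d\}$ with $C$ having entries in $\{-1,0,1,2\}$, in polynomial time, while strongly representing $P$. I would then run Step 2 on $Q$ to produce $1$-margins $u,v\in\R_{\geq 0}^r$ and $w\in\R_{\geq 0}^{m+1}$ together with a set $\mathcal{Z}\subseteq [r]\times [r]\times [m+1]$ of forbidden index triples, such that the tables in $\R^{r\times r\times (m+1)}_{\geq 0}$ satisfying both the $1$-margins and $x_{i,j,k}=0$ for every $(i,j,k)\in\mathcal{Z}$ are in bijection (and integer-to-integer-point bijection) with the points of $Q$ via projection onto the variable-box coordinates. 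Let $T$ denote the full $3$-way axial transportation polytope of format $r\times r\times (m+1)$ defined by the $1$-margins $u,v,w$ alone. For each $(i,j,k)\in\mathcal{Z}$, the hyperplane $x_{i,j,k}=0$ supports $T$ because $x_{i,j,k}\geq 0$ is among its defining inequalities; hence the intersection of $T$ with all such hyperplanes is a face $F$ of $T$, and by construction $F$ is precisely the encoding of $Q$. Composing with the representation of $Q$ as $P$ from Step 1 realizes $P$ as a face of $T$. The dimension discrepancy between the $m+1$ layers arising here and the $3$ layers in the statement can be reconciled by a standard aggregation of equation-layers into a constant number of combined layers, enabled by the bounded coefficients produced in Step 1.

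The only genuinely non-trivial bookkeeping --- and in my view the main though essentially routine obstacle --- is verifying that the $1$-margins $u,v,w$ constructed in Step 2 actually force each variable box $R_j\times R_j\times H$ to encode a single scalar $y_j$, and that the horizontal plane-sum $w_k=b_k+U\cdot\sum_{j\in J^-}|a_{k,j}|$ imposes precisely the equation $\sum_j a_{k,j}y_j=b_k$. This content is largely already written out in Step 2 of the excerpt, so closing the proof amounts to combining the bijections from Steps 1 and 2 with the observation that each forbidden-entry constraint defines a supporting hyperplane of $T$.
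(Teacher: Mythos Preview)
Your approach is essentially the paper's: the corollary is stated there as an immediate consequence of Steps~1 and~2, and your observation that the ``forbidden'' entries of Step~2 are exactly a set of coordinate equalities $x_{i,j,k}=0$, hence cut out a face of the ambient axial $1$-margin polytope, is precisely the intended argument. One caveat: the paper does not reduce the third dimension from $m+1$ to $3$ either---the ``$3$'' in the displayed statement is evidently carried over from Theorem~\ref{theorem:universality-with-complexity} rather than produced by Steps~1 and~2, so your appeal to a ``standard aggregation of equation-layers'' is neither needed nor actually justified; what Steps~1 and~2 yield is an $r\times r\times(m+1)$ axial transportation polytope, and that is all the paper claims to derive from them.
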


The properties we have seen in Section~\ref{section:classical} for  classical $2$-way transportation polytopes 
raise the issue whether the analogous questions or properties hold for multi-way transportation polytopes. We will address the following:

\begin{enumerate}

\item {\bf Real feasibility (Vlach Problems~\cite{Vlach:SolutionsPlanarTransportation}):} 
Is there a simple characterization in terms of the $2$-margins of those $3$-way
transportation polytopes which are empty?

In particular, do any of the conditions on the margins proposed by Schell,
Haley, Moravek and Vlack (see pages 374--376 of~\cite{YKK}) suffice to guarantee that the polytope is non-empty?

\item {\bf Dimension:} What are the possible dimensions of a $p \times
q \times s$ transportation polytope? Is it always equal to
$(p-1)(q-1)(s-1)$?

\item {\bf Graphs of $3$-way transportation polytopes:}
Do we have a good bound for the diameter?
Is the Linear Hirsch Conjecture true in this case?

\item {\bf Number of vertices of $3$-way transportation polytopes:}
Can one estimate minimum and maximum number of vertices possible?
Do they have a nice characterization?

\item {\bf Integer Feasibility Problem:} Given a prescribed
collection of integral margins that seem to describe a
$d$-way transportation polytope of size $p_1 \times \dots \times p_d$, 
does there exist an integer table with these margins?
Can such an integral $d$-way table be efficiently determined?

\item {\bf Integer Range Property:} 
Given a collection of margins coming from $d$-way table, and an index tuple $(i_1,\dots,i_d)$, do all integer values inside
the range of an interval appear for the coordinate $x_{i_1,\dots,i_d}$ in the corresponding transportation polytope?

\item {\bf Graver/Markov basis for $3$-way transportation polytopes:}
Are the Graver bases for $3$-way transportation polytopes as nice as they are for $2$-way transportation polytopes?
Do $3$-way transportation polytopes have the ``interval property'' for entry values?

\end{enumerate}

Most of these questions had easy solutions for classical transportation polytopes. In the next sections we answer all these questions for multi-way transportation polytopes.

\subsubsection{Feasibility and dimension revisited}

% Feasibility

Recall Lemma~\ref{lemma:nonemptycriterion} for $2$-way transportation polytopes, which gave a simple characterization for a non-empty polytope in terms of its margins. From the equations in~\eqref{equation:1marginals}, a similar necessary and sufficient condition for the $3$-way axial transportation polytope to be non-empty can be proved:
\begin{lemma}\label{lemma:axialfeasibility}
Let $P$ be the $p \times q \times s$ axial $3$-way transportation polytope defined by the marginals $u$, $v$, and $w$. The polytope $P$ is non-empty if and only if 
\begin{equation}\label{equation:axialsizigies}
\sum_{i=1}^p u_i = \sum_{j=1}^q v_j = \sum_{k=1}^s w_k.
\end{equation}
\end{lemma}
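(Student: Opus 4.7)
The plan is to prove the two directions separately. For necessity, assume a feasible table $x=(x_{i,j,k})$ exists. Summing the first set of equations in \eqref{equation:1marginals} over $i\in[p]$ gives
\[
\sum_{i=1}^p u_i \;=\; \sum_{i,j,k} x_{i,j,k},
\]
and analogous summations of the second and third sets of equations yield the same triple sum expressed as $\sum_j v_j$ and $\sum_k w_k$, respectively. Comparing the three expressions delivers \eqref{equation:axialsizigies}.

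For sufficiency, I would construct an explicit non-negative feasible table directly, rather than appeal to the northwest corner rule as in the 2-way case. Let $N$ denote the common value $\sum_i u_i = \sum_j v_j = \sum_k w_k$. If $N=0$, then (since the marginals are non-negative) every $u_i$, $v_j$, $w_k$ vanishes, and the all-zero table lies in $P$. If $N>0$, define the rank-one product table
\[
x_{i,j,k} \;:=\; \frac{u_i\, v_j\, w_k}{N^2}, \qquad i\in[p],\; j\in[q],\; k\in[s].
\]
Non-negativity of the entries is immediate from $u,v,w\in\R^p_{\geq 0}\times\R^q_{\geq 0}\times\R^s_{\geq 0}$. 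The axial marginal conditions follow from direct calculation: for instance,
\[
\sum_{j,k} x_{i,j,k} \;=\; \frac{u_i}{N^2}\Bigl(\sum_j v_j\Bigr)\Bigl(\sum_k w_k\Bigr) \;=\; \frac{u_i}{N^2}\cdot N\cdot N \;=\; u_i,
\]
and the other two marginal equations are verified identically by symmetry.

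There is no real obstacle here; the only subtle point is handling the degenerate case $N=0$ separately, since then the formula $u_iv_jw_k/N^2$ is undefined, but in that case the all-zero table trivially works. I expect the proof to be only a few lines once the product construction is written down, which explains why the authors would treat it as a routine extension of Lemma~\ref{lemma:nonemptycriterion}.
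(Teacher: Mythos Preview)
Your argument is correct. The necessity direction is the same double-counting argument anyone would write. For sufficiency, the paper gestures at a $3$-way analogue of the northwest corner rule (as used for Lemma~\ref{lemma:nonemptycriterion}), whereas you produce the explicit rank-one table $x_{i,j,k}=u_iv_jw_k/N^2$. Both are valid; yours is shorter and yields a closed-form interior point, while the northwest-corner approach is algorithmic and outputs a vertex of $P$ (and, when the marginals are integral, an integral table---something your product formula does not give). For the bare feasibility statement your construction is the cleaner choice.

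One small point worth making explicit: the lemma as stated in the paper writes $u\in\R^p$, $v\in\R^q$, $w\in\R^s$ without a non-negativity hypothesis, but your proof (correctly) uses $u,v,w\geq 0$. This is not a gap in your argument---it is an implicit assumption the lemma needs anyway, since with a negative marginal entry the polytope is empty regardless of \eqref{equation:axialsizigies} (e.g.\ $u=(-1,3)$, $v=w=(1,1)$). The $2$-way Lemma~\ref{lemma:nonemptycriterion} states the hypothesis; here it is simply understood.
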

The proof of this lemma is like Lemma~\ref{lemma:nonemptycriterion} for $2$-way transportation polytopes, but using a $3$-way analogue of the northwest corner rule algorithm.

While similar statements are true for $d$-way transportation polytopes defined by $1$-marginals, the real feasibility problem does not have a known characterization for $m$-marginals with $m \geq 2$, even for $3$-way transportation polytopes. This study is called \emph{real feasibility} or the Vlach Problems (see~\cite{Vlach:SolutionsPlanarTransportation}). The conditions on the
margins proposed by Schell, Haley, Moravek and Vlack (see~\cite{Vlach:SolutionsPlanarTransportation}) are necessary, but not sufficient to guarantee that the polytopes are non-empty. By the universality theorem one cannot expect a simple characterization (in terms of the $2$-marginals of $3$-way transportation polytope) to decide when they are empty. In fact, due to Theorem~\ref{theorem:universality-with-complexity}, given a prescribed collection of marginals that seem to describe a $d$-way transportation polytope of size $p_1 \times \dots \times p_d$, deciding whether there is an integer table with these margins is an NP-complete problem.

% Dimension

Recall that we had also a nice simple dimension formula for $2$-way transportation polytopes.
As a consequence of Lemma~\ref{lemma:axialfeasibility}, the $3$-way axial transportation polytope 
$P$ defined by~\eqref{equation:1marginals} is completely described by only $p+q+s-2$ independent equations. 
The maximum possible dimension for $p \times q \times s$ transportation polytopes defined by $1$-marginals is $pqs-p-q-s+2$.
For planar $3$-way transportation polytopes, one can see that in fact only $pq + ps + qs - p - q - s + 1$ of the defining equations are linearly independent for feasible systems.  The maximum possible dimension for $p \times q \times s$ transportation polytopes defined by $2$-marginals is $(p-1)(q-1)(s-1)$. Unfortunately, by the universality theorem, the dimension of these polytopes can be any number up to $(p-1)(q-1)(s-1)$.

\subsubsection{Combinatorics of faces revisited}

From the universality theorem one can expect that the $f$-vector and indeed the entire combinatorial properties of any rational polytope
will appear when listing all the $f$-vectors of $3$-way transportation polytopes. Indeed, Shmuel Onn has suggested this as a way to systematically enumerate
combinatorial types of polytopes.  In~\cite{DeLoera:GraphsTP}, there was an experimental investigation of the possible polyhedra that
arise for small $3$-way transportation polytopes. The number of vertices of certain low-dimensional $3$-way transportation polytopes 
have been completely classified:

\begin{theorem}
The possible numbers of vertices of non-degenerate $2
  \times 2 \times 2$ and $2 \times 2 \times 3$ axial transportation
  polytopes are those given in Table \ref{axial_vertices}.

  Moreover, every non-degenerate $2 \times 2 \times 4$ axial transportation
  polytope has between $32$ and $504$ vertices.  Every non-degenerate
  $2 \times 3 \times 3$ axial transportation polytopes has between
  $81$ and $1056$ vertices.  The number of vertices of non-degenerate
  $3 \times 3 \times 3$ axial transportation polytopes is at least $729$.
\end{theorem}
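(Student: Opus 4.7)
The plan is to exploit the fact that, although the set of marginals $(u,v,w)$ satisfying~\eqref{equation:axialsizigies} is infinite, the vertex count of the resulting axial transportation polytope is constant on each open cell of a natural polyhedral subdivision of marginal space, and only finitely many cells appear. Two marginal triples that induce the same pattern of which $(p+q+s-2)$-subsets of the constraint matrix give feasible bases produce polytopes with identical face lattices. Hence the set of attainable non-degenerate vertex counts is finite and, in principle, enumerable.

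For $2\times 2\times 2$ and $2\times 2\times 3$, I would carry out the enumeration explicitly. First, list all candidate bases, i.e., the $(p+q+s-2)$-subsets of $[p]\times[q]\times[s]$ whose corresponding columns of the constraint matrix are linearly independent (the $3$-way analogue of the spanning trees in Lemma~\ref{lemma:vertexBx}). Then, for each candidate collection $\mathcal{B}$ of such supports, decide by linear programming whether there exist strictly positive marginals, satisfying the strict non-degeneracy inequalities, for which $\mathcal{B}$ is exactly the set of vertex supports; the number of vertices equals $|\mathcal{B}|$. Collecting these cardinalities over the realizable $\mathcal{B}$ yields the exact entries of Table~\ref{axial_vertices}. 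This is a finite computation that should be carried out mechanically and matches the data in~\cite{DeLoera:GraphsTP,tpdb}.

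For the three larger sizes, only bounds are claimed. The lower bounds should be established by exhibiting explicit marginals — the central axial polytope with $u_i=qs$, $v_j=ps$, $w_k=pq$, suitably perturbed to sit in generic position, is the natural first candidate, and its vertex count can be computed directly. The upper bounds for $2\times 2\times 4$ and $2\times 3\times 3$ should be obtained by an exhaustive sweep over combinatorial types, cut down by the symmetry group $S_p\times S_q\times S_s$ acting on indices, to reduce the number of representative marginals to something tractable; one then verifies that none of the representatives exceeds $504$ or $1056$, respectively.

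The main obstacle is combinatorial explosion. For $3\times 3\times 3$ the constraint matrix has $27$ columns and rank $7$, so there are on the order of $\binom{27}{7}$ candidate supports, of which only some are feasible under any generic marginal. Establishing the lower bound $729$ in particular will require either a direct construction together with a counting argument (likely via a slice-by-slice application of the $2$-way divisibility result Theorem~\ref{theorem:classical-gcd} to each of the $1$-margin layers), or a complete enumeration of combinatorial types modulo the symmetry group, which is where I would expect the bulk of the computational effort to go.
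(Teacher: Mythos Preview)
The paper does not prove this theorem in the text; it is stated as a computational result imported from~\cite{DeLoera:GraphsTP}, obtained by exhaustive enumeration of the chambers of the parameter space (cf.\ the remark in Section~4 linking vertex-count enumeration to the enumeration of chambers of a vector configuration, and the database~\cite{tpdb}). Your overall framework---that the combinatorial type is constant on open cells of a polyhedral subdivision of marginal space, so only finitely many types occur and can in principle be listed---is exactly the right one and matches what was actually done.

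However, your treatment of the bounds for the larger sizes is backwards. The assertions ``between $32$ and $504$'', ``between $81$ and $1056$'', and ``at least $729$'' are statements about \emph{every} non-degenerate polytope of the given size: proving that the minimum vertex count is at least $32$ (respectively $81$, $729$) requires showing that \emph{every} chamber yields at least that many vertices, not exhibiting a single one that does. Exhibiting a marginal triple with $32$ vertices would only show the minimum is \emph{at most} $32$. The same applies to the upper bounds $504$ and $1056$. So both ends of each range come from the same exhaustive sweep over chambers (modulo the $S_p\times S_q\times S_s$ symmetry), not from constructing extremal examples. Your suggestion of using the perturbed central polytope for the lower bound is particularly misdirected: the central polytope is degenerate, and perturbing it produces a polytope with \emph{many} vertices (each degenerate vertex splits), so it is a candidate for the maximum, not the minimum---and even there, the paper explicitly notes at the end of Section~4 that in the $3$-way setting the central polytope does \emph{not} in general maximize the vertex count. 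Finally, the appeal to Theorem~\ref{theorem:classical-gcd} for the $729$ bound is a non-starter: that result is a divisibility statement for $2$-way polytopes and gives no lower bound on the vertex count of a $3$-way axial polytope.
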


\begin{theorem} The possible numbers of vertices of non-degenerate $2
  \times 2 \times 2$, $2 \times 2 \times 3$, $2 \times 2 \times 4$, $2
  \times 2 \times 5$, and $2 \times 3 \times 3$ planar transportation
  polytopes are those given in Table \ref{planar_vertices}. Moreover, 
  every non-degenerate $2 \times 3 \times 4$ planar transportation 
  polytope has between $7$ and $480$ vertices.

\end{theorem}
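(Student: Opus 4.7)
The plan is to analyze each family size by enumerating combinatorial types of planar transportation polytopes within that family. Recall from Section~\ref{section:multiway} that a planar $p \times q \times s$ polytope has dimension at most $(p-1)(q-1)(s-1)$, so the sizes above give polytopes of dimension $1$, $2$, $3$, $4$, $4$, and $6$, respectively. For each size, the defining data are three $2$-margin matrices $(U,V,W)$ of sizes $q \times s$, $p \times s$, and $p \times q$, subject to the linear compatibility condition that the induced $1$-margins agree. This gives a finite-dimensional parameter space $\mathcal{M}$, and the combinatorial type of $P(U,V,W)$ is constant on each chamber of the secondary fan of $\mathcal{M}$. Counting vertices then reduces to enumerating chambers and evaluating $f_0$ on a representative in each.

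First I would reduce by symmetry: independent permutations of rows, columns, and layers, together with axis swaps when two of $p,q,s$ agree, act on $\mathcal{M}$ and preserve combinatorial types. For each resulting equivalence class of marginals, I would compute the vertex set directly. In the $2 \times 2 \times s$ cases, the constraint $x_{1,j,k}+x_{2,j,k}=U_{j,k}$ lets us eliminate half of the variables, reducing the polytope to an $(s-1)$-dimensional polytope cut from a box, which is accessible by standard vertex-enumeration software (\texttt{polymake}, \texttt{lrs}, or \texttt{cdd}). The same enumeration in principle handles $2 \times 3 \times 3$ (dimension $4$) and, with more effort, $2 \times 3 \times 4$ (dimension $6$). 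In each enumeration, non-degeneracy would be verified by checking at every vertex that $|\supp(x)|$ equals the generic value $pq+ps+qs-p-q-s+1$; by the perturbation argument of Lemma~4.6 in Chapter~6 of~\cite{YKK}, the non-degenerate case also dominates the degenerate one in vertex count.

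For the $2 \times 3 \times 4$ bounds, the lower bound of $7$ is the elementary dimension bound $d+1$ for a $d$-dimensional polytope, which would be attained by exhibiting explicit marginals whose polytope is a simplex (or whose vertex set has size $7$). The upper bound of $480$ would be established either by a computer search over marginal representatives in every chamber, or by a combinatorial ceiling argument: count the possible support patterns of a non-degenerate vertex, each of which must have cardinality $pq+ps+qs-p-q-s+1 = 19$ and satisfy an acyclicity condition analogous to Lemma~\ref{lemma:vertexBx}, and then bound how many such patterns can simultaneously occur as vertices of a single polytope.

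The main obstacle is completeness of the chamber enumeration. Even after symmetry reduction, $\mathcal{M}$ is cut by exponentially many inequalities and must be traversed exhaustively; the $2 \times 3 \times 4$ case in particular is on the edge of what is computationally feasible. In practice one generates many rational marginals in general position, verifies non-degeneracy, and tabulates $f_0$; a rigorous write-up must then justify that the sampling has reached every attainable chamber. For the smaller sizes $2\times 2\times s$ with $s \le 5$ and $2 \times 3 \times 3$, this is plausible by direct inspection of the low-dimensional parameter space, so the delicate step is mainly bookkeeping; the genuine difficulty is confined to the $2\times 3\times 4$ upper bound.
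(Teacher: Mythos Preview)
The paper does not prove this theorem; it is a survey, and the statement is quoted from~\cite{DeLoera:GraphsTP} with only the tables reproduced. So there is no in-paper argument to compare against. That said, your plan matches the methodology of the cited source: the results there were obtained by a computer-aided enumeration of chambers in the parameter space of margins, with a representative polytope in each chamber analyzed by vertex-enumeration software, exactly as you describe. Your symmetry reductions, the variable-elimination for the $2\times 2\times s$ family, and the $d+1$ lower bound for $2\times3\times4$ are all appropriate.

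One point worth tightening: your fallback ``combinatorial ceiling'' for the $480$ upper bound is unlikely to succeed as stated. Counting admissible $19$-element support patterns and bounding how many can coexist does not readily give a sharp bound; in~\cite{DeLoera:GraphsTP} the upper bound comes from the exhaustive chamber enumeration itself, not from a separate counting argument. So the rigor of the $2\times3\times4$ claim rests entirely on certifying that the chamber traversal is complete, which you correctly flag as the delicate step. Random sampling of margins is not a proof; you would need an actual secondary-fan computation (e.g., via a Gale-dual or parametric-LP traversal) to claim exhaustiveness.
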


\begin{table}[hbtp]
 \font\ninerm=cmr8
 \centerline{\ninerm
 \begin{tabular}{|c|c|c|}
 \hline
\textrm{\qquad Size\qquad} & \textrm{Dimension} & \textrm{Possible numbers of vertices} \cr \hline
$2 \times 2 \times 2$ & $4$ & 8 11 14 \\ \hline
$2 \times 2 \times 3$ & $7$ & 18 24 30 32 36 38 40 42 44 46 48 50 52 54 56 58 60 62 64 66 \cr
                      &     & 68 70 72 74 76 78 80 84 86 96 108 \\ \hline
\end{tabular}
}
\caption{Numbers of vertices possible in non-degenerate axial transportation polytopes} \label{axial_vertices}
\end{table}

  \begin{table}[hbtp]
 \font\ninerm=cmr8
 \centerline{\ninerm
 \begin{tabular}{|c|c|c|}
 \hline
\textrm{\qquad Size\qquad} & \textrm{Dimension} & \textrm{Possible numbers of vertices} \cr \hline         
$2 \times 2 \times 2$ & $1$ & 2 \\ \hline
$2 \times 2 \times 3$ & $2$ & 3 4 5 6 \\ \hline
$2 \times 2 \times 4$ & $3$ & 4 6 8 10 12 \\ \hline
$2 \times 2 \times 5$ & $4$ & 5 8 11 12 14 15 16 17 18 19 20 21 22 23 24 25 26 27 28 29 30 \\ \hline
$2 \times 3 \times 3$ & $4$ & 5 8 9 11 12 13 14 15 16 17 18 19 20 21 22 23 \cr
                      &     & 24 25 26 27 28 29 30 31 32 33 34 35 36 37 38 39 40 \cr
                      &     & 41 42 43 44 45 46 47 48 49 50 51 52 53 54 55 56 57 58 59 \\ \hline
\end{tabular}
}
\caption{Numbers of vertices possible in non-degenerate planar transportation polytopes} \label{planar_vertices}
\end{table}  

Again from Theorem~\ref{theorem:universality-with-complexity}, the graph of every rational convex polyhedron 
will appear as the graph of some $3$-way transportation polytope. In particular if the Hirsch Conjecture is true for 
$3$-way transportation polytopes given by $2$-marginals, then it is true for all rational convex polytopes. By 
Corollary~\ref{corollary:1marginface} the graph of every rational convex polytope is the graph of a face of some 
$3$-way transportation polytope given by $1$-margins. Interestingly, in joint work with Onn and Santos 
(see~\cite{DeLoera:GraphsTP}), we proved a quadratic bound on the diameter of $3$-way transportation polytopes given by $2$-margins:

\begin{theorem}
The diameter of every $p \times q \times s$ axial $3$-way transportation polytope is at most  $2(p+q+s-2)^2$.
\end{theorem}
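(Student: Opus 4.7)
The plan is to mimic the linear-bound argument for classical $2$-way transportation polytopes by iteratively pivoting one vertex toward the other while tracking a combinatorial measure of progress. Set $N := p + q + s - 2$. By Lemma~\ref{lemma:axialfeasibility} and the discussion following it, the axial constraint matrix has rank $N$, so every vertex of the axial polytope $T$ has at most $N$ positive entries in its support.

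Given two vertices $x, y \in T$, I measure distance by the symmetric difference of supports, $\delta(x, y) := |\supp(x) \triangle \supp(y)|$. Since $|\supp(x)|, |\supp(y)| \leq N$, we have $\delta(x, y) \leq 2N$; and in the non-degenerate case $\delta(x,y)=0$ forces $x=y$, since the support determines the solution uniquely through the constraint equations. The heart of the proof is a \emph{progress lemma}: from any vertex $x \neq y$, there is a pivot sequence in $T$ of length at most $N$ producing a vertex $x'$ with $\delta(x', y) < \delta(x, y)$.

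To prove the progress lemma, pick any $(i^*, j^*, k^*) \in \supp(x) \setminus \supp(y)$ (or the analogous entry with roles reversed). The difference $y - x$ lies in the kernel of the axial constraint matrix, so it decomposes into a signed sum of \emph{circuits}—minimal sign-patterned kernel vectors—one of which, call it $C$, contains $(i^*, j^*, k^*)$ in its support. Pivoting $x$ along the direction $-C$ as far as feasibility allows produces a vertex $x_1$ of a face of $T$ where some support entry has been zeroed out; if $\delta(x_1, y) < \delta(x,y)$, we are finished, otherwise a fresh discrepancy has been introduced and we recurse, each time on a strictly smaller working support. Since the cumulative support of all circuits used is contained in $\supp(x) \cup \supp(y)$, at most $N$ pivots suffice. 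Iterating the progress lemma at most $2N$ times gives $\dist_T(x,y) \leq 2N\cdot N = 2(p+q+s-2)^2$; degenerate polytopes are reduced to non-degenerate ones by the perturbation argument following Corollary~\ref{corollary:classicalTPsizeofsupport}.

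The principal obstacle is controlling the length of the cancelling pivot sequence in the progress lemma. For $2$-way polytopes, Lemma~\ref{lemma:vertexBx} tells us that supports of vertices are forests and adjacent vertices differ by a unique simple $4$-cycle, yielding rigid, short pivot moves (as exploited by Hurkens and by Brightwell--van~den~Heuvel--Stougie). For $3$-way axial polytopes no such tree structure is available: the support is a tripartite hypergraph, circuits can wander through many cells, and a single pivot can \emph{introduce} new disagreements elsewhere. Making the $O(p+q+s)$ bound per discrepancy rigorous requires a careful structural/amortized analysis of axial circuits—most likely by ordering pivots so that each one either zeroes out a target entry or strictly reduces the active circuit length—and this combinatorial bookkeeping is where the real work of the proof lies.
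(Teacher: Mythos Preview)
The survey does not itself prove this theorem; it states the result and cites~\cite{DeLoera:GraphsTP} for the argument, so there is no in-paper proof to compare line by line.

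Your proposal nonetheless has a genuine gap, and you name it yourself in the final paragraph: the progress lemma is asserted but not established. The obstacle is more than bookkeeping. Moving $x$ along a circuit direction $-C$ until a coordinate vanishes is not an edge of the polytope graph; it lands on a point of a face whose support may have acquired several entries of $\supp(C)$ at once, and that point need not be a vertex of $T$ at all. To realise such a motion as an edge-walk one must insert support entries one at a time via simplex pivots, and each such pivot may eject an entry you did not intend to lose---so intermediate vertices need not have support contained in $\supp(x)\cup\supp(y)$, and the claimed ``at most $N$ pivots per discrepancy'' has no justification as written. The $2$-way case works only because the bipartite-tree structure of Lemma~\ref{lemma:vertexBx} forces every pivot cycle to be short and controllable, and you correctly observe that no analogous structure is available for the tripartite support hypergraph.

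In~\cite{DeLoera:GraphsTP} the authors avoid the uncontrolled-circuit problem by exploiting a fixed linear order on the index triples rather than a circuit decomposition of $y-x$. A three-index analogue of the northwest-corner rule singles out, for each choice of marginals, a canonical vertex, and one pivots an arbitrary vertex toward that canonical one, matching support entries in the prescribed order; the monotonicity of the order is what prevents earlier matches from being undone and bounds the work per entry by $N=p+q+s-2$. Connecting both $x$ and $y$ to this common canonical vertex gives the factor of~$2$. It is this ordered, one-sided potential---not a symmetric-difference descent between $x$ and $y$---that supplies the termination and length bound your sketch is missing.
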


\begin{openproblem}
Prove or disprove the Linear Hirsch Conjecture for $3$-way axial transportation polytopes: is it true that there is a universal constant $k$ such that the diameter of every $p \times q \times s$ axial $3$-way transportation polytope is at most $k(p+q+s)$?
\end{openproblem}

\subsubsection{Integer points revisited}

Integer feasibility becomes now a truly difficult problem because due to Theorem~\ref{theorem:universality-with-complexity}, all linear and integer programming problems are slim $3$-way transportation problems. In other words, any linear or integer
programming problem is equivalent to one that has a $\{0,1\}$-valued constraint matrix, with exactly three $1$'s per column in the constraint matrix, and depends only on the right-hand side data. 

Again, there is bad news for the integer range property. Unlike Lemma~\ref{lemma:classical-integer-range} for $2$-way transportation polytopes, now the values of a variable $x_{i,j,k}$ in a $3$-way transportation polytope can have integer gaps.
Similarly we saw that $2$-way transportation polytopes have a nice \emph{Graver basis}, which we recall is a minimal set of vectors needed to travel between any pair of integer points in the polytope. Unlike the case of $2$-way transportation problems and as a consequence of Theorem~\ref{theorem:universality-with-complexity}, the coefficients in the entries of a Markov basis for $d$-way transportation polytopes can be arbitrarily large (see~\cite{DO3}), not just $0, -1, 1$ as we saw in Lemma~\ref{markovbases2way}.

Since any integer linear programming problem can be encoded as a slim $3$-way transportation problem, the family of $3$-way transportation polytopes really varied. The very same family of $3$-way transportation problems of $p \times q \times 3$ and specified by $2$-margins
contains subproblems that admit fully polynomial approximation schemes as well as subproblems that do not have arbitrarily close
approximation (unless $NP=P$). For this reason, no purely combinatorial approximation algorithm, i.e., one that does not take into account the $2$-margin values, can be devised. 

As we had for $2$-way transportation polytopes, we have $3$-way Birkhoff polytopes, which are much more complicated:

\begin{definition}\label{definition:generalizedbirkhoffmultiway}
The Birkhoff polytope has the following generalizations in the $3$-way setting, these are the \emph{multi-way assignment polytopes}:
\begin{enumerate}
\item
The \emph{generalized Birkhoff $3$-way axial polytope} is the axial $3$-way transportation $p \times q \times s$ polytope whose $1$-marginals are given by the vectors $u = (qs, \ldots, qs) \in \R^p$, $v = (ps, \ldots, ps) \in \R^q$, and $w = (pq, \ldots, pq) \in \R^s$.
\item
The \emph{generalized Birkhoff $3$-way planar polytope} is the planar $3$-way transportation $p \times q \times s$ polytope whose $2$-marginals are given by the $q \times s$ matrix $U_{j,k}=p$, the $p \times s$ matrix $V_{i,k}=q$, and the $p \times q$ matrix $W_{i,j}=s$.
\end{enumerate}
\end{definition}

There is a vibrant study of  $d$-way assignment polytopes. We refer the readers to the recent book
\cite{Burkard+DellAmico+Martello} about assignment problems. We would simply like to mention some results that show how much
harder it is to work with them versus the $2$-way Birkhoff polytope. First, a now-classical result of Karp about the axial assignment
polytope shows it is much more difficult to optimize over $d$-way assignment polytopes.

\begin{theorem}[Karp~\cite{Karp:Reducibility}] 
The optimization problem
\begin{equation*}
\begin{array}{rl}
\text{maximize/minimize} 
	& \displaystyle \sum c_{i,j,k} x_{i,j,k} \\
\text{subject to}
	& 
	\left\{
	\begin{array}{l}
	\displaystyle \sum_{j,k} x_{i,j,k}=1,\\
	\displaystyle \sum_{i,k} x_{i,j,k}=1,\\
	\displaystyle \sum_{i,j} x_{i,j,k}=1,\\
	x\in {\mathbb Z}_{\geq 0}^{p\times p\times p}
	\end{array}
	\right.
\end{array}
\end{equation*}
is NP-hard. 
\end{theorem}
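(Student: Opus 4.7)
The plan is to exhibit a polynomial-time reduction from perfect 3-dimensional matching (3DM) to this assignment optimization problem. Recall that a perfect 3DM instance consists of disjoint $p$-element sets $X, Y, Z$ and a collection $T \subseteq X \times Y \times Z$, and we must decide whether there is a subfamily $T' \subseteq T$ of size $p$ covering each element of $X \cup Y \cup Z$ exactly once. This is one of Karp's original 21 NP-complete problems, and for the purpose of this theorem I would invoke it as a black box.

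Given such a 3DM instance on $X = Y = Z = [p]$ with triple set $T$, I would construct the axial assignment instance with $p_1 = p_2 = p_3 = p$ and cost coefficients
\[
c_{i,j,k} = \begin{cases} 0 & \text{if } (i,j,k) \in T, \\ 1 & \text{otherwise.} \end{cases}
\]
I would then prove the equivalence: $T$ contains a perfect 3-dimensional matching if and only if the minimum value of the optimization problem equals $0$ (the maximization version is symmetric after flipping $c$).

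The key verification is the bijection between integer feasible points and selections of $p$ triples that cover each row, column, and depth index exactly once. Because all $1$-marginals equal $1$ and every $x_{i,j,k}$ is a non-negative integer, each plane-sum constraint forces exactly one entry in each ``line'' to equal $1$ and all others to equal $0$. Thus an integer feasible point is determined by choosing, for each $i \in [p]$, a unique pair $(j_i, k_i) \in [p] \times [p]$ such that both $i \mapsto j_i$ and $i \mapsto k_i$ are permutations of $[p]$; equivalently, by a perfect 3-dimensional matching on $[p] \times [p] \times [p]$. Under this bijection, the objective value counts the number of chosen triples lying outside $T$, so the optimum is $0$ precisely when some perfect 3DM is contained in $T$. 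The reduction is clearly polynomial in the input size, and integrality of the optimum comes for free from the $0$-$1$ structure.

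The principal obstacle is not the reduction itself but rather the NP-hardness of 3DM, which requires a nontrivial gadget-based argument (truth-setting gadgets for variables and satisfaction-testing gadgets for clauses, together with auxiliary ``garbage collector'' elements that absorb triples corresponding to literals set to \emph{false}). I would cite Karp's original paper~\cite{Karp:Reducibility} for this part. Once 3DM is taken as a known NP-hard problem, the construction above yields NP-hardness of the axial assignment problem in both the maximization and minimization forms stated in the theorem.
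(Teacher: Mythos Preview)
The paper does not supply a proof of this theorem; it simply states the result and attributes it to Karp's 1972 paper~\cite{Karp:Reducibility}. So there is no in-paper argument to compare against.

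Your reduction from 3-dimensional matching is the standard one and is correct. One terminological slip: you write that ``each plane-sum constraint forces exactly one entry in each `line' to equal $1$,'' but the constraints here are $1$-marginals, i.e., sums over two indices, so each constraint fixes an entire $p\times p$ \emph{slice}, not a line. Your subsequent description is nevertheless right: since $\sum_{j,k} x_{i,j,k}=1$ with $x\in\mathbb{Z}_{\geq 0}$, each $i$-slice contains exactly one nonzero entry, and the remaining two families of constraints force $i\mapsto j_i$ and $i\mapsto k_i$ to be permutations. That is precisely a perfect $3$-dimensional matching on $[p]^3$, and the objective with your choice of $c$ vanishes exactly when the matching lies inside $T$. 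Citing Karp for the NP-completeness of 3DM is appropriate and is in fact all the paper itself does.
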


\begin{theorem}[Crama, Spieksma~\cite{CramaSpieksma:Approximation}]
For the minimization problem above, no polynomial time algorithm can even achieve a constant
performance ratio unless NP=P.
\end{theorem}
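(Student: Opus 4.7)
The plan is to give a gap-preserving reduction from $3$-dimensional matching (3DM) to the axial $3$-assignment problem. Recall that 3DM is NP-complete by Karp: given $T\subseteq [p]\times[p]\times[p]$, decide whether $T$ contains a \emph{perfect matching}, that is, $p$ triples of $T$ covering every element of each copy of $[p]$ exactly once. Our objective is to show that, for each constant $\rho\geq 1$, a polynomial-time $\rho$-approximation for the minimization 3-AP would solve 3DM in polynomial time, hence force $\mathrm{P}=\mathrm{NP}$.

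Fix $\rho\geq 1$. Given a 3DM instance $T$, construct the 3-AP instance with costs
\[
c_{i,j,k}\ =\ \begin{cases}\,1,&(i,j,k)\in T,\\ \,M,&(i,j,k)\notin T,\end{cases}\qquad M:=\lceil \rho p\rceil+1.
\]
Because $\rho$ is a fixed constant, $M$ has polynomial encoding length and the reduction is polynomial in the size of $T$. Observe that because every constraint $\sum_{j,k}x_{i,j,k}=1$ (resp.\ for the other axes) together with $x\geq 0$ integer forces $x\in\{0,1\}^{p\times p\times p}$, every feasible point selects exactly $p$ triples forming a perfect matching in the combinatorial sense. Thus the optimum satisfies
\[
\mathrm{OPT}\ =\ p\quad\text{if $T$ has a perfect matching},\qquad \mathrm{OPT}\ \geq\ M+(p-1)\quad\text{if it does not}.
\]

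Now suppose $\mathcal A$ is a polynomial-time $\rho$-approximation for the minimization 3-AP. Run $\mathcal A$ on the constructed instance. In the yes-case for 3DM, $\mathcal A$ returns a value at most $\rho p$. In the no-case, $\mathcal A$ returns a value at least $\mathrm{OPT}\geq M+(p-1)>\rho p$ by our choice of $M$. Comparing the output against the threshold $\rho p$ therefore decides 3DM in polynomial time, so $\mathrm{P}=\mathrm{NP}$. Since this argument produces a separate reduction for every fixed constant $\rho$, no single polynomial-time algorithm can achieve any constant performance ratio unless $\mathrm{P}=\mathrm{NP}$.

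There is no deep obstacle here; the subtle point is merely that the approximation ratio $\rho$ is fixed in advance rather than being part of the input. This is what lets us set the penalty $M=\lceil \rho p\rceil+1$ to a value polynomial in the input size, producing the linear-in-$p$ gap between yes- and no-instances that a multiplicative $\rho$-approximation cannot straddle. One could equally well reduce from the more restricted \textsc{3-Dimensional Matching on bounded-degree hypergraphs} or from \textsc{3-Partition} to obtain the same conclusion, but the plain 3DM reduction above is the shortest route.
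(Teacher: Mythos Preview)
Your argument is correct: the gap-reduction from 3-dimensional matching with penalty $M=\lceil\rho p\rceil+1$ is exactly the standard way to establish inapproximability of the axial $3$-assignment problem, and all the steps check out. Note, however, that the paper is a survey and does not supply its own proof of this theorem; it simply cites the Crama--Spieksma result, whose original argument is essentially the same reduction you give.
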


There are very interesting ``universality'' results about the coordinates of
 vertices of the generalized assignment problem with $1$-margins.

\begin{definition}
For a vertex $x$ of the $d$-way
$1$-margin assignment problem define its \emph{spectrum} to be the vector
$\operatorname{spectrum}(x)$ with positive and decreasing entries which contains the
values of all entries with repetitions deleted.
\end{definition}
For example, the spectrum of a permutation matrix is always $1$. 

Gromova (see~\cite{Gromova}) gave a characterization of which vectors are in 
the spectrum. Here are some of her results:

\begin{theorem}[Gromova~\cite{Gromova}]
Given a positive decreasing vector $\sigma$ of 
rational numbers its \emph{relation matrix} $R(\sigma)$ is the matrix of
all distinct non-negative integer row vectors $\tau$ such that 
$\sigma \cdot \tau = 1$ 
\begin{enumerate}
\item For any positive decreasing vector $v$ with components less than one
whose relation matrix is not empty and whose columns are linearly
independent and for $k \geq \max (1/v_i)$, there is a vertex of a
$d$-way assignment polytope with $1$-margins with spectrum $v$.
\item Take any positive decreasing vector $v$ with components less than one.
It appears as part of the spectrum of some vertex of a $3$-way assignment
polytope.
\end{enumerate}
\end{theorem}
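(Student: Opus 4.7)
The plan is to construct, for part (1), an explicit vertex of the $d$-way $k\times\cdots\times k$ assignment polytope whose distinct non-zero entries are exactly the components of $v$, and then to deduce part (2) by a padding argument. The starting data is the relation matrix $R(v)$: its rows $\tau^{(1)},\dots,\tau^{(s)}$ are precisely the multiplicity patterns with which the values $v_1,\dots,v_m$ can appear along a single line summing to $1$, since the defining condition $v\cdot\tau=1$ with $\tau\in\Z_{\geq 0}^m$ exactly describes an admissible line composition. Since the columns of $R(v)$ are linearly independent we may select rows collectively involving every $v_i$, and the bound $k\geq\max_i 1/v_i$ guarantees each line of length $k$ can accommodate the required number $\tau_i\leq 1/v_i$ of entries of value $v_i$.

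Given this, I would choose a block decomposition $[k]=B_1\sqcup\cdots\sqcup B_s$ and construct a block-diagonal table $x$ supported on $B_1^d\cup\cdots\cup B_s^d$. Inside each $B_\ell^d$ of side $k_\ell$, the value $v_i$ is placed via a $d$-way cyclic Latin arrangement: on cells $(a_1,\dots,a_d)$ satisfying $a_1+\cdots+a_d\equiv c\pmod{k_\ell}$ for $\tau_i^{(\ell)}$ residues $c$, the residues chosen disjointly across $i$. Since each residue class meets every axis-parallel line of the block in exactly one cell, this realizes the pattern $\tau^{(\ell)}$ on every line, forcing every line sum to equal $v\cdot\tau^{(\ell)}=1$; hence all $1$-margins of the assembled table equal one and its spectrum is exactly $v$.

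To verify extremality, I would show any perturbation of $x$ preserving the zero pattern and all $1$-margins must vanish. The $1$-margin equations restricted to $\supp(x)$ say that along each line the perturbations of cells of value $v_i$ sum to a common quantity $\epsilon_i$ with $\sum_i \tau_i^{(\ell)}\epsilon_i = 0$ for every row $\tau^{(\ell)}$ active in some block; linear independence of the columns of $R(v)$ then forces each $\epsilon_i=0$, and the cyclic rigidity of the Latin arrangement within a block forces the perturbation to be constant on each residue class and finally to vanish. For part (2), given an arbitrary decreasing $v$ with entries in $(0,1)$, I would adjoin extra values (e.g.\ the reciprocal $1/N$ of a common denominator of the $v_i$, together with small generic perturbations) to form an enlarged vector $v'$ with non-empty relation matrix and linearly independent columns; applying part (1) with $d=3$ then produces a vertex whose spectrum is $v'\supset v$.

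The main obstacle is the extremality argument itself: unlike the $2$-way case (Lemma~\ref{lemma:vertexBx}), multi-way transportation polytopes lack a clean support-graph characterization of vertices, so the assertion that the constructed $x$ is a vertex must be checked by a direct rank computation on the submatrix of the $1$-margin constraint matrix restricted to $\supp(x)$, and translating the linear-algebraic hypothesis on $R(v)$ into the required combinatorial rigidity of the Latin block design is the technical heart of the proof. A secondary subtlety is the packing feasibility of choosing block sizes $k_1,\dots,k_s$ summing to $k$ with each $k_\ell\geq\sum_i\tau_i^{(\ell)}$; this may force one to reuse some rows of $R(v)$ across multiple blocks and to bound $\sum_i\tau_i^{(\ell)}\leq 1/v_m\leq k$ using $\sum_i\tau_i^{(\ell)}v_i=1$ together with the hypothesis $k\geq\max_i 1/v_i$.
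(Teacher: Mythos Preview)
The paper does not contain a proof of this theorem: it is a survey, and Gromova's result is merely cited from \cite{Gromova} with no argument given. So there is no ``paper's own proof'' to compare against.

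As for your sketch on its own merits, the overall architecture for part~(1) --- a block-diagonal table built from Latin-type arrangements so that every line realizes one of the rows $\tau^{(\ell)}$ of $R(v)$ --- is a reasonable strategy, and you have correctly identified that the serious work lies in the extremality verification. But the extremality step as you have written it has a real gap: you assert that along each line the perturbations of cells carrying value $v_i$ sum to a \emph{common} quantity $\epsilon_i$ independent of the line, yet nothing in the $1$-margin constraints forces equal-value cells on different lines to move in aggregate by the same amount. What the constraints give you is one linear relation per line among the perturbations on that line; to collapse these into a single system $R(v)\epsilon=0$ you would first need an argument (presumably using the cyclic structure) that the perturbation is constant on each value-class within a block, and that step is exactly the ``cyclic rigidity'' you invoke but do not prove. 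Without it, linear independence of the columns of $R(v)$ is not yet engaged.

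For part~(2), the padding idea is natural, but your proposal is quite vague: you need to produce $v'\supset v$ for which $R(v')$ has linearly independent columns, and it is not obvious that adjoining $1/N$ and ``small generic perturbations'' achieves this while keeping the entries rational, decreasing, and in $(0,1)$. This is the kind of step that can be made to work, but it requires a concrete construction rather than an appeal to genericity.
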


We now discuss $d$-way assignment polytopes defined by $2$-margins.
Recall the Birkhoff-von Neumann Theorem (Theorem~\ref{theorem:birkhoffvonneumann}) which stated that the $p \times p$ Birkhoff polytope has $p!$ vertices. There is a $3$-way analogue of this result: First, recall the $p \times p \times p$ generalized Birkhoff planar polytope (also called the \emph{$2$-marginals assignment polytope}) is the $3$-way transportation polytope of line sums whose $2$-margins are given by $U_{j,k}=V_{i,k}=W_{i,j}=1$. In the case of $2$-margins one can see that a solution, which is a $3$-way array, has in each planar slice a permutation matrix.
This indicates that there is a bijection between the $0/1$ vertices of the $3$-way
$2$-marginals $p \times p \times p$ assignment polytope and the possible $p \times p$ \emph{latin squares}. Although their number is not known
exactly this is enough to say that the number of $0/1$ vertices of the polytope is bounded below by $(p!)^{2p}/p^{p^2}$.
Recently, Linial and Luria (see~\cite{Linial:VerticesdBirkhoff}) proved that the total number of vertices of the 
$p \times p \times p$ generalized Birkhoff polytope is at least exponential in the number of Latin squares of order $p$.

Again we have a hardness result on the planar $3$-way assignment polytope:

\begin{theorem} (Dyer, Frieze~\cite{Dyer:Planar3dm}) The linear optimization problem 
\begin{equation*}
\begin{array}{rl}
\text{maximize} 
	& \displaystyle\sum c_{i,j,k} x_{i,j,k} \\
\text{subject to}
	& 
	\left\{
	\begin{array}{l}
	\displaystyle\sum_{i} x_{i,j,k}=1,\\
	\displaystyle\sum_{j} x_{i,j,k}=1,\\
	\displaystyle\sum_{k} x_{i,j,k}=1,\\
	x\in {\mathbb Z}_{\geq 0}^{p\times p\times p}\\
	\end{array}
	\right.
\end{array}
\end{equation*}
is in general NP-hard, even when $c_{i,j,k} \in \{0,1\}$. However, when
 $c_{i,j,k}=c_{i,j,l}$ for all $l, k$ then the problem is polynomially solvable.
\end{theorem}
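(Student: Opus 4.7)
The plan is to handle the two assertions separately, since the underlying obstructions are very different.

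For the polynomial-time case, suppose the cost is independent of the third index, so $c_{i,j,k} = d_{i,j}$ for some $d \in \R^{p \times p}$. Using the planar $2$-marginal constraint $\sum_k x_{i,j,k} = 1$ for every $(i,j)$, the objective collapses:
\[
\sum_{i,j,k} c_{i,j,k} x_{i,j,k} \;=\; \sum_{i,j} d_{i,j}\,\sum_k x_{i,j,k} \;=\; \sum_{i,j} d_{i,j},
\]
which is a constant on the feasible region. Hence every feasible integer point is optimal, and the cyclic Latin square defined by $x_{i,j,k} = 1$ iff $k \equiv i + j \pmod{p}$ gives a polynomial-time witness.

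For the NP-hardness assertion, note (as already observed in the passage preceding the Linial--Luria theorem) that the integer feasible points of the program are in bijection with Latin squares of order $p$ via their $0$-$1$ incidence tables. I would then reduce from \emph{Partial Latin Square Completion}, the NP-complete problem of deciding whether a partial Latin square $P \subseteq [p] \times [p] \times [p]$ extends to a full Latin square (Colbourn, 1984). Given $P$, set
\[
c_{i,j,k} \;=\; \begin{cases} 1 & \text{if } (i,j,k) \in P, \\ 0 & \text{otherwise.}\end{cases}
\]
For any Latin square $L$ the objective value equals $|L \cap P| \leq |P|$, with equality iff $L \supseteq P$. So the optimum equals $|P|$ precisely when $P$ is completable, and the reduction is polynomial in $p$ and produces only $\{0,1\}$-valued cost data.

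The hard part is importing an appropriate NP-hard source problem in a form compatible with the $\{0,1\}$ cost restriction. Dyer and Frieze's own route goes through their companion theorem that Planar $3$-Dimensional Matching is NP-complete and then requires a gadget-style transformation to embed a $3$DM instance inside a planar $3$-way transportation polytope while respecting $0$-$1$ costs; the alternative above (Colbourn's theorem) sidesteps the gadgetry at the cost of invoking a separate nontrivial NP-hardness result, but either way the polynomial case follows immediately once one notices the $2$-marginal simplification.
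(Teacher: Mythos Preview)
The paper does not supply a proof of this theorem: it is stated as a cited result of Dyer and Frieze, with no argument given in the text. So there is nothing in the paper to compare your proposal against line by line.

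That said, your proposal is a correct and self-contained proof. The polynomial-time case is exactly right: once $c_{i,j,k}=d_{i,j}$, the constraint $\sum_k x_{i,j,k}=1$ forces the objective to equal the constant $\sum_{i,j} d_{i,j}$ on the entire feasible region, so any Latin square (e.g., the cyclic one) is optimal and can be written down in polynomial time. For hardness, your reduction from Partial Latin Square Completion is clean and valid: integer feasible points are precisely the $0$-$1$ incidence arrays of Latin squares, your $\{0,1\}$ cost vector makes the objective count $|L\cap P|$, and this hits $|P|$ iff the partial square $P$ extends. Since Colbourn's theorem gives NP-completeness of completion, the decision version of the maximization problem is NP-hard with $\{0,1\}$ costs.

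The only difference from the attribution in the paper is the choice of NP-hard source problem. The paper credits Dyer--Frieze, whose own argument passes through the NP-completeness of (planar) $3$-dimensional matching; you instead invoke Colbourn. Both are legitimate polynomial-time reductions landing in the same target, and your route has the advantage that no gadget construction is needed---the Latin-square structure of the feasible region does all the work. Either way, the polynomial case is immediate from the observation about the third $2$-marginal, which you identified correctly.
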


Though this maximization problem is NP-hard, we note that Nishizeki and Chiba (see~\cite{Nishizeki:PlanarGraphs}) showed that a PTAS exists.

\section{Further research directions and more open problems}

There are several fascinating areas of research where tables with prescribed sums of their entries play a role. In this last section
we would like to take a quick look at some of these areas and highlight some very nice open questions.

\subsection{$0$-$1$ tables}

We have seen some results like Birkhoff's theorem on permutation matrices that deal specifically with $0$-$1$ tables. Interesting problems about $0$-$1$ tables appear naturally in  combinatorial representation theory (see~\cite{vallejo1, pakrepresentationtheory}) and number theory (see~\cite{alpersnumthy}). Analyzing the properties of $0$-$1$ tables is a classic area of research in combinatorial matrix 
theory. This field combines techniques from combinatorics and group theory and it is so large we do not even attempt to summarize the results available. The reader should consult the books \cite{Brualdi:CombinatorialApproachMatrixTheory,Brualdi:CombinatorialMatrixTheory}. As a taste of the richness of the field of combinatorial matrix theory
let us just talk about results  known for subpolytopes of the Birkhoff polytope. One can consider permutation polytopes, obtained as the convex hull of some vertices of a Birkhoff polytope. (Note that this notion of permutation polytope is distinct from the permutation polytopes of Billera and Sarangarajan in~\cite{Billera:CombinatoricsPermutationPolytopes}.) In~\cite{Onn:Geometry-Complexity}, Onn analyzed the geometry, complexity and combinatorics of permutation polytopes. In~\cite{Baumeister:On-permutation-polytopes}, Baumeister et al. studied the faces and combinatorial types that appear in small permutation polytopes. Brualdi (see~\cite{Brualdi:Convex-polytopes}) investigated the faces of the convex polytope of doubly stochastic matrices which are invariant under a fixed row and column permutation. The \emph{$p$th tridiagonal Birkhoff polytope} is the convex hull of the vertices of the Birkhoff polytope whose support entries are in $\{(i,j) \in [p] \times [p] \mid |i-j| \leq 1\}$. In~\cite{da-Fonseca:Fibonacci-numbers}, da Fonseca et al. counted the number of vertices of tridiagonal Birkhoff polytopes. In~\cite{Costa:The-number-of-faces}, Costa et al. presented a formula counting the number of faces tridiagonal Birkhoff polytopes. Volumes of permutation polytopes
were studied in \cite{mopaper}.

Costa et al. (see~\cite{Costa:The-diameter-of-the-acyclic}) defined a \emph{$p$th acyclic Birkhoff polytope} to be any polytope that is the convex hull of the set of matrices whose support corresponds to (some subset of) a fixed tree graph's edges (including loops). In~\cite{Costa:Face-counting}, Costa et al. counted the faces of acyclic Birkhoff polytopes. In~\cite{Costa:The-diameter-of-the-acyclic}, Costa et al. proved an upper bound on the diameter of acyclic Birkhoff polytopes, which generalized the diameter result of Dahl in~\cite{Dahl:Tridiagonal-doubly}.

Let the \emph{$p$th even Birkhoff polytope} be the convex hull of the $\frac12p!$ permutation matrices corresponding to even permutations. In~\cite{Cunningham:On-the-even-permutation}, Cunningham and Wang confirmed a conjecture of Brualdi and Liu (see~\cite{Brualdi:The-polytope-of-even}) that the $p$th even Birkhoff polytope cannot be described as the solution set of polynomially many linear inequalities. In~\cite{Hood:Some-facets}, Hood and Perkinson described some of the facets of the even Birkhoff polytope and proved a conjecture of Brualdi and Liu (see~\cite{Brualdi:The-polytope-of-even}) that the number of facets of the $p$th even Birkhoff polytope is not polynomial in $p$. In~\cite{Below:On-a-theorem-of-L.-Mirsky}, von Below showed that the condition of Mirsky given in~\cite{Mirsky:Even-doubly} is not sufficient for determining membership of a point in an even Birkhoff polytope. Cunningham and Wang (see~\cite{Cunningham:On-the-even-permutation}) also investigated the membership problem for the even Birkhoff polytope. In~\cite{Below:Even-and-odd-diagonals}, von~Below and R\'enier described even and odd diagonals in even Birkhoff polytopes. In~\cite{Cho:Convex-polytopes}, Cho and Nam introduced a signed analogue of the Birkhoff polytope.

The $0$-$1$ points of transportation polytopes also have a strong connection to \emph{discrete tomography}, which considers the problem of reconstructing binary images (or finite subsets of objects placed in a lattice) from a small number of their projections. The connection to tables is clear as one can think of the position of the objects in points in a grid as the placement of $0$'s and $1$'s in entries of a table. This is a very active field of research.
See~\cite{Alpers:StabilityErrorCorrection, Brunetti:ReconstructionConstraints, Gardner:DiscreteTomographyDetermination, Gardner:SuccessiveDetermination, Gardner:ComputationalComplexity, Gritzmann:IndexSiegelGrids, Gritzmann:UniquenessDiscreteTomography, Gritzmann:SuccessFailureReconstruction, Gritzmann:ApproxBinaryImages, Gritzmann:AlgorithmicInversionRadon, gaborkubabook1, gaborkubabook2}
 and the references therein.
These reconstruction problems are important in CAT scanner development, electron microscope image reconstruction, and quality control in semiconductor production (see, e.g., \cite{Alpers:StabilityErrorCorrection, Gardner:DiscreteTomographyDetermination,Gardner:SuccessiveDetermination} and the references therein).

In light of this discussion, the following open problem is interesting:
\begin{openproblem}
What is the complexity of counting all $2$-way $0$-$1$ tables for given margins?
\end{openproblem}
In the next section, we discuss what is known about enumerating contingency tables in general. 

\subsection{Enumeration, sampling and optimization}

We have seen that counting contingency tables is quite important in combinatorics and statistics.
In~\cite{DO1}, De Loera and Onn gave a complete description of the computational complexity of existence, 
counting, and entry-security in multi-way table problems. The following theorem summarizes what 
is known about counting (specified in terms of binary encoding or unary encoding of data):

\begin{theorem} The computational complexity of the 
counting problem for integral $3$-way tables of size $p \times q \times s$ with 
$2\leq p\leq q \leq s$ and all $2$-marginals specified is 
provided by the following table:
\vskip .2cm \noindent
\begin{tabular}{|c|c|c|c|c|} \hline
& $p,q,s$ & $p,q$ fixed, & $p$ fixed,     & $p,q,s$ variable \\ 
&  fixed  & $s$ variable  & $q,s$ variable & \\ \hline
unary $2$-marginals   & P & P    & \#PC  & \#PC \\
binary $2$-marginals  & P & \#PC & \#PC  & \#PC \\
\hline
\end{tabular}
\end{theorem}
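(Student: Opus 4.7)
The plan is to split the table into the polynomial-time entries, which I will settle using Barvinok's lattice point counting algorithm in fixed dimension together with a generating function convolution, and the \#P-complete entries, which I will obtain from the universality theorem (Theorem~\ref{theorem:universality-with-complexity}) together with Dyer's \#P-completeness for $2\times q$ classical tables~\cite{Dyer:Counting-SharpP}.

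For the two polynomial-time entries, I first observe that when $p,q,s$ are all fixed the ambient dimension $pqs$ is constant, so Barvinok's algorithm~\cite{Barvinok:PolyTimeIntPoints} counts lattice points in the associated $3$-way transportation polytope in time polynomial in the binary encoding of the $2$-marginals, covering both the unary and binary columns of the top row. For the more delicate entry ``$p,q$ fixed, $s$ variable, unary,'' I will decompose a $3$-way table into its $s$ planar slices $X_1,\dots,X_s$; each $X_k$ is an integer $p\times q$ matrix whose row-sums and column-sums equal $V_{\cdot,k}$ and $U_{\cdot,k}$, and the slices are tied together by the fiber constraint $\sum_k X_k=W$. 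Since $p,q$ are constants, each slice lies in a fixed-dimensional classical transportation polytope, so by Barvinok--Woods one can compute in polynomial time a short multivariate rational generating function $f_k(z)$ for the lattice points of the $k$th slice polytope, with formal variables $z_{i,j}$ marking the entries. Multiplying the $f_k$ together and extracting the coefficient of $\prod_{i,j} z_{i,j}^{W_{i,j}}$ gives the count; under unary encoding every marginal has polynomial magnitude, so the relevant coefficient supports and the convolution stay of polynomial size.

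For the \#P-complete entries I proceed by reduction. The bottom-right cell (all variable, binary marginals) follows at once from Theorem~\ref{theorem:universality-with-complexity}, since any $0/1$ integer linear system, and hence any \#P-complete counting problem over such a system, is polynomial-time representable as a slim $3$-way transportation polytope with specified $2$-marginals via a bijection on lattice points. To obtain the unary \#P-hardness I reduce from counting $2\times q$ classical tables with unary margins, shown to be \#P-complete by Dyer~\cite{Dyer:Counting-SharpP}: I embed a $2\times q$ instance as a single planar slice of a $2\times q\times s$ polytope and use the remaining slices as slack layers whose $2$-marginals force their entries to be uniquely determined. This settles the ``$p$ fixed, $q,s$ variable, unary'' entry and, by specialization, the fully variable unary case. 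For the remaining ``$p,q$ fixed, $s$ variable, binary'' entry I replace Dyer's reduction with a reduction from a \#P-complete binary subset-sum counting problem, using the binary marginals to pack exponentially many choices into the fiber matrix $W$ and using the $s$ slices to realize the indicator variables of the subset-sum instance.

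The hard part will be the polynomial-time case $p,q$ fixed, $s$ variable with unary marginals: I have to verify that the Barvinok--Woods generating functions for the $s$ slices admit a polynomial-size joint representation after convolution, and that extracting the $W$-coefficient remains polynomial under unary input. A secondary difficulty is making each slack-layer construction in the \#P-hardness reductions \emph{strictly parsimonious}, so that the count of the original instance equals the count of the constructed $3$-way polytope; this amounts to choosing the slack marginals so tightly that every slack slice admits exactly one completion.
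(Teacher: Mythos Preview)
The paper does not prove this theorem; it merely cites it from~\cite{DO1}.  So there is no in-paper argument to compare against, and I will only comment on the soundness of your plan.

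Your polynomial-time arguments are fine in spirit (for the ``$p,q$ fixed, $s$ variable, unary'' cell a straightforward dynamic program over the partial fiber sums $\sum_{k'\le k} X_{k'}$ already works, since with $pq$ constant and unary $W$ the state space has polynomial size; invoking Barvinok--Woods is heavier machinery than needed, but not wrong).

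There is, however, a genuine gap in your \#P-hardness argument for the unary column.  You write that you will ``reduce from counting $2\times q$ classical tables with unary margins, shown to be \#P-complete by Dyer.''  That source problem is \emph{not} \#P-complete in unary: a $2\times q$ table is determined by its first row $(x_{1,1},\dots,x_{1,q})$ subject to $0\le x_{1,j}\le v_j$ and $\sum_j x_{1,j}=u_1$, so the count is the coefficient of $t^{u_1}$ in $\prod_j(1+t+\cdots+t^{v_j})$, computable by a polynomial-time DP when the $v_j$ and $u_1$ are given in unary.  The Dyer--Kannan--Mount hardness for $2\times q$ tables is a binary-encoding result (it is essentially \#Knapsack).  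Consequently your reduction for the cell ``$p$ fixed, $q,s$ variable, unary $2$-marginals'' collapses, and with it the ``$p,q,s$ variable, unary'' cell that you wanted to get by specialization.

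The fix is to choose a \#P-complete source problem whose defining data are already $0/1$, for instance the $0/1$ permanent, and then apply the universality construction of Theorem~\ref{theorem:universality-with-complexity}.  Because the matrix $A$ has $\{0,1\}$ entries, Step~1 of the construction is vacuous and the marginals produced in Steps~2--3 are bounded polynomially in the size of $A$; hence they are of polynomial size in \emph{unary}.  Since the universality theorem yields a slim $r\times c\times 3$ polytope with a lattice-point bijection, this gives a parsimonious reduction establishing \#P-hardness for ``$p=3$ fixed, $q,s$ variable, unary $2$-marginals,'' and the fully variable unary cell follows.  Your sketch for the ``$p,q$ fixed, $s$ variable, binary'' cell via a subset-sum style reduction is salvageable: with $p=q=2$ each slice has a single free integer parameter $t_k$ lying in an interval, the fiber constraint becomes $\sum_k t_k = W_{1,1}$, and counting such tuples is \#P-hard in binary; but you should make this explicit rather than leave it at ``pack exponentially many choices into $W$.''
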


Using the highly-structured Graver bases of transportation polytopes with special restrictions one can do some polynomial-time
optimization on highly difficult problems: E.g., 
De~Loera, Hemmecke, Onn, and Weismantel (see~\cite{DeLoera:NFold}) proved there is a polynomial time algorithm that, given $s$ and fixing $p$ and $q$, solves integer programming problems of $3$-way transportation polytopes of size $p \times q \times s$ defined by $2$-marginals, over any integer objective. Later on, 
in~\cite{De-Loera:Convex-Integer} De~Loera, Hemmecke, Onn, Rothblum, and Weismantel presented a polynomial oracle-time algorithm to solve convex integer maximization over $3$-way planar transportation polytopes, if two of the margin sizes remain fixed. More recently (in~\cite{Hemmecke:A-polynomial-oracle-time}) Hemmecke, Onn, and Weismantel proved a similar result for convex integer minimization.

\subsection{More open problems on transportation polytopes}

We will also mention some more conjectures and open problems on transportation polytopes, and where applicable, give an update on problems where there are solutions and partial answers. We hope this will help to increase the interest in this subject.

\begin{conjecture}
It is impossible to have $p \times q \times s$ non-degenerate $3$-way transportation polytopes, specified by $2$-margin matrices $U,V,W$, whose number $f_0$ of vertices satisfies the
inequalities $(p-1)(q-1)(s-1)+1 < f_0(M(U,V,W)) < 2(p-1)(q-1)(s-1)$?
\end{conjecture}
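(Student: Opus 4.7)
The plan is to reduce the conjecture to a structural statement about simple polytopes and then exploit combinatorial features specific to planar $3$-way transportation polytopes. First, I would record that a non-degenerate $p \times q \times s$ planar transportation polytope $P$ is a simple polytope of dimension $d := (p-1)(q-1)(s-1)$. Hence $f_0(P) \geq d+1$, with equality if and only if $P$ is a $d$-simplex. The conjecture therefore reduces to showing: if $P$ is not a simplex, then $f_0(P) \geq 2d$.

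To produce $2d$ vertices, I would attempt a two-step pivot argument. Fix a vertex $v$ of $P$; by simplicity it has $d$ neighbors $v_1, \ldots, v_d$, giving $d+1$ vertices. For each $i$, the vertex $v_i$ has $d-1$ further neighbors $u_{i,1}, \ldots, u_{i,d-1}$ beyond $v$. If the induced subgraph on $\{v, v_1, \ldots, v_d\}$ is complete, so that every $u_{i,j}$ coincides with some $v_k$, I expect one can recover that $P$ itself is a simplex, contradicting the hypothesis. Otherwise, at least one $u_{i,j}$ lies outside $\{v, v_1, \ldots, v_d\}$, and the plan is to propagate this out-degree by the symmetry of the simple polytope to yield $d-1$ additional distinct vertices, reaching a total of $2d$.

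The main obstacle is precisely the propagation step. One must characterize the minimal circuits (in the kernel of the $2$-margin constraint matrix) supported at a vertex of $P$, and argue that a single ``long'' circuit at $v$ — one whose second pivot exits $\{v, v_1, \ldots, v_d\}$ — forces the existence of at least $d-1$ further such external neighbors. This is delicate because planar $3$-way transportation matrices are not totally unimodular, and by the universality theorem (Theorem~\ref{theorem:universality-with-complexity}) the local circuit structure at $v$ can mimic arbitrary rational polytope data, so any argument truly specific to $2$-marginal supports must be used. A plausible alternative attack is induction on $p+q+s$ via coordinate contraction, using Tables~\ref{axial_vertices} and~\ref{planar_vertices} as base cases; I would also test whether the rigidity-flavored inequalities of the generalized lower-bound theorem for simple polytopes, combined with a sharp count of facets of $P$ (in the spirit of Corollary~\ref{corollary:classicalnumberfacets}), close the gap on their own.
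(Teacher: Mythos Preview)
The statement you are attempting to prove is presented in the paper as an open \emph{conjecture}, not a theorem; the paper offers no proof and only remarks that it has been verified for $p,q,s \leq 3$. So there is no ``paper's own proof'' to compare against.

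That said, your reduction is correct and in fact decisive. A non-degenerate $p\times q\times s$ planar transportation polytope is a simple polytope of dimension $d=(p-1)(q-1)(s-1)$, and the conjecture is exactly the assertion that a simple $d$-polytope which is not a simplex has at least $2d$ vertices. You then try to manufacture those $2d$ vertices by a two-step pivot argument, and you correctly identify the propagation step as the obstacle; as written, that step is a genuine gap, and your appeal to circuit structure runs straight into the universality obstruction you yourself flag.

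The gap is unnecessary, however, because the alternative you mention in passing already finishes the job. Barnette's Lower Bound Theorem for simplicial polytopes, dualized to simple polytopes, says that a simple $d$-polytope with $n$ facets has at least $(d-1)n-(d+1)(d-2)$ vertices. A simple $d$-polytope that is not a simplex has $n\geq d+2$ facets, and plugging in $n=d+2$ gives $f_0\geq (d-1)(d+2)-(d+1)(d-2)=2d$. No transportation-specific structure is needed: neither the pivot/circuit analysis nor the induction on $p+q+s$ you propose. So rather than a proof \emph{proposal}, you actually have a proof once you replace the hand-rolled pivot argument by a one-line citation of Barnette's theorem; the authors appear simply not to have noticed this when compiling their list of open problems.
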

This conjecture is true when $p,q,s \leq 3$.

\begin{openproblem}
Is it true that the graph of any
$2$-way $p \times q$ transportation polytope is Hamiltonian?
\end{openproblem}
Hamiltonicity of the graph is known to hold for small values of $p$ and $q$.

\begin{openproblem}
Suppose $\phi_1(p,q),\phi_2(p,q),\dots,\phi_{t_{p,q}}(p,q)$ are all possible values of the number of vertices of $p \times q$ transportation polytopes. Give a formula for $t_{p,q}$.
\end{openproblem}

This is related to the problem of enumerating all triangulations or chambers of a vector configuration.

\begin{conjecture}
All integer numbers between 1 and $p+q-1$,
and only these, are realized as the diameters of $p \times q$ transportation
polytopes.
\end{conjecture}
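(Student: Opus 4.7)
The conjecture naturally splits into an upper bound $\operatorname{diam}(P) \le p+q-1$ for every $p \times q$ transportation polytope $P$, and a realizability statement that each integer in $\{1,2,\dots,p+q-1\}$ occurs as the diameter of some such polytope. The upper bound is exactly the sharp Hirsch Conjecture for $2$-way transportation polytopes (the preceding open problem), so it is clearly the main obstacle: the best bound currently known is Hurkens' $4(p+q-2)$.

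For the upper bound I would try to strengthen Lemma~\ref{lemma:crucial-hurkens} so that, on average, only a single pivot (rather than four) suffices to produce each shared leaf node between the support graphs $B(x)$ and $B(y')$. A complementary approach is to change the potential function: rather than tracking common leaves, try to bound $\dist_P(x,y)$ directly by a linear function of $|\supp(x)\, \triangle\, \supp(y)|$, aiming to show that along some shortest path this quantity drops by at least one at each pivot. The $p \times 2$ case (Corollary~\ref{theorem:thm_2n}) and the Birkhoff polytope case both suggest that the unifying structure should be a decomposition of $B(x) \cup B(y)$ into cycles and paths whose combined length governs $\dist_P(x,y)$; carrying out such a decomposition without suffering the factor of four is the delicate step.

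For the realization direction the plan is constructive. Diameter $1$ is trivially achieved by any $2 \times 2$ transportation polytope with margins $u = v = (1,1)$, which is a single edge; larger $p$ and $q$ are handled by padding with rows or columns of zero supply and zero demand, which by Lemma~\ref{lemma:vertexBx} preserves the vertex structure and hence the diameter. The upper extreme $d = p+q-1$ is to be realized by Hirsch-tight examples, produced either as cube-slice transportation polytopes in the spirit of Corollary~\ref{theorem:thm_2n} or by a direct construction of a generic $p \times q$ transportation polytope whose defining margins achieve the maximum $pq$ facets allowed by Corollary~\ref{corollary:classicalnumberfacets}. The intermediate values should then be filled in by an interpolation scheme: given a $p' \times q'$ transportation polytope of diameter $d$, glue it along a shared margin to a small $2 \times 2$ segment gadget so that every shortest path in the enlarged polytope is forced to cross the bridge, strictly increasing the diameter by exactly one; this mimics the interval-filling argument behind the facet counts in Corollary~\ref{corollary:classicalnumberfacets}. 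The delicate technical point in the realization direction is verifying that no intermediate value is skipped by the gluing, for which a computer enumeration via the database \cite{tpdb} for small $p,q$ would serve as both a sanity check and a source of base cases.
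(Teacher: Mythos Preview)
The statement is a \emph{conjecture}, listed among the open problems in the paper's final section; the paper offers no proof and does not claim one. You correctly identify that the upper-bound half is precisely the Hirsch Conjecture for $2$-way transportation polytopes, which the paper explicitly leaves open (the best known bound being Hurkens' $4(p+q-2)$). So there is no ``paper's own proof'' to compare your attempt against, and what you have written is a research programme, not a proof.

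On the realization half your plan also has real gaps. The zero-padding trick is not justified by Lemma~\ref{lemma:vertexBx}, which is stated only for strictly positive marginals $u \in \R^p_{>0}$, $v \in \R^q_{>0}$; with zero marginals the polytope collapses to a lower-dimensional object isomorphic to a smaller transportation polytope, and it is unclear that this should count as a genuine $p \times q$ example (indeed, if it does, a single-point polytope of diameter $0$ would already violate the ``only these'' clause). To realize diameter $1$ for fixed $p,q \geq 3$ with positive marginals you would need a $(p-1)(q-1)$-dimensional transportation polytope whose graph is complete, and you have not constructed one. Likewise the interpolation step via ``gluing a $2 \times 2$ gadget'' is purely heuristic: you give no argument that the glued object is again a $p \times q$ transportation polytope, nor that the diameter increases by exactly one rather than zero or more than one.
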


\begin{openproblem}
What are the possible number of facets
for $3$-way $p \times q \times s$ non-degenerate transportation polytopes 
given by $2$-margins?
\end{openproblem}

\begin{openproblem}
What is the largest possible number of
vertices in a $3$-way $p \times q \times s$ transportation polytope?
\end{openproblem}

Recall the $p \times q \times s$ {\em generalized central transportation polytope} is the $3$-way transportation polytope of line sums whose $2$-margins are given by  the $q \times s$ matrix $U_{j,k}=p$,  the $p \times s$ matrix $V_{i,k}=q$, and 
the $p \times q$ matrix $W_{i,j}=s$.  Yemelichev, Kovalev, Kratsov stated in \cite{YKK} the conjecture that
the generalized central transportation polytope had the largest number of vertices among $3$-way transportation polytopes.
This conjecture was proved to be false in~\cite{DeLoera:GraphsTP}. Here are explicit $2$-marginals for a $3 \times 3 \times 3$ transportation polytope which has more vertices ($270$ vertices) than the generalized central transportation polytope, with only $66$ vertices:
{\tiny
	\begin{table}[hbt]
	\begin{center}
	\begin{tabular}{|ccc| |ccc| |ccc|} \hline
	      164424  &    324745  &    127239 &   163445   &    49395 &     403568  & 184032  &    123585 &     269245 \\ \hline
	      262784  &    601074  &   9369116 &   1151824   &   767866 &    8313284 & 886393  &   6722333 &     935582 \\ \hline
	      149654  &   7618489  &   1736281 &   1609500   &  6331023 &    1563901 & 1854344 &    302366 &    9075926 \\ \hline
	\end{tabular}
	\end{center}
	\end{table}
}

\section*{Acknowlegements}
We thank Raymond Hemmecke, Fu Liu, Shmuel Onn, Francisco Santos, and
Ruriko Yoshida for their suggestions and joint work on transportation
problems.  We are also grateful to them and Andreas Alpers, Matthias
Beck, Steffen Borgwardt, Persi Diaconis, Peter Gritzmann, Igor Pak,
Seth Sullivant, and Ernesto Vallejo for suggestions. The first author
is grateful for the support received from NSF grant DMS-0914107.  The
first author is grateful to the Technische Universit\"at M\"unchen for
the hospitality received during the days while writing this survey.

\end{document}